\newtheorem{thm}[equation]{Theorem} 
\newtheorem{prop}[equation]{Proposition}
\newtheorem{lemma}[equation]{Lemma}
\theoremstyle{definition}
\newtheorem{defn}[equation]{Definition}
\newtheorem{remark}[equation]{Remark}
\title{Iterative Differential Embedding Problems in positive Characteristic}
\author{Stefan Ernst}
\address{Stefan Ernst \\ RWTH Aachen \\ Lehrstuhl A f\"ur Mathematik \\ Templergraben 55 \\ 52062 Aachen \\ Germany} 
\email{stefan.ernst@matha.rwth-aachen.de} 
\date{June 27, 2011}
\begin{document}

\maketitle

\begin{abstract} 
In this paper, we prove that every iterative differential embedding problem over an algebraic function field in positive characteristic with an algebraically closed field of constants has a proper solution. 
\end{abstract}

\section{Introduction} 

The inverse problem of differential Galois theory asks which linear algebraic groups occur as Galois groups of a Picard-Vessiot extension $E/F$, for a given differential field $F$. We say that a linear algebraic group $\mathcal{G}$ is realizable over $F$, if there exists a Picard-Vessiot extension $E/F$, such that the Galois group $\text{Gal}(E/F)$ is isomorphic to $\mathcal{G}(K)$, where $K$ is the field of constants of $F$. In \cite{MvdP03}, B.H. Matzat and M. van der Put showed that every reduced linear algebraic group is realizable over an algebraic function field of positive characteristic with an algebraically closed field of constants. \\ 
An embedding problem is an intensification of the inverse problem: Given an epimorphism of linear algebraic groups  $\beta$$: \widetilde{\mathcal{G}} \rightarrow \mathcal{G}$ and a realization $E/F$ of $\mathcal{G}$, does there exist a realization $\widetilde{E}/F$ of $\widetilde{\mathcal{G}}$, such that $E$ is a subfield of $\widetilde{E}$? \\ 
If we can give a positive answer to this question, we say the embedding problem has a proper solution. If only a subgroup of $\widetilde{\mathcal{G}}$ is realizable in this way, we just say that the embedding problem is solvable. \\ 
In usual Galois theory the study of embedding problems offers a great deal of information regarding the structure of the absolute Galois group $G_F := \text{Gal}(F^{\text{sep}}/F)$ (here $F$ denotes an arbitrary field). For example, the group $G_F$ is projective, if and only if all embedding problems over $F$ are solvable. Further by Iwasawa's Freiheitssatz \cite{MM99}, Theorem IV.1.12, the group $G_F$ is free (of countable infinite rank) if and only if all embedding problems over $F$ have a proper solution. \\
Now let $F$ again be an iterative differential field. The Picard-Vessiot extensions of $F$ form a neutral Tannakian category \textbf{T}, with fibre functor $\omega$$: \textbf{T} \rightarrow \textbf{Vect}_K$ (for a suitable field $K$). Then by the Main Theorem of Tannakian formalism, the functor $\text{Aut}^{\otimes}(\omega)$ of $K$-algebras is representable by an affine group scheme $\pi_1(\textbf{T})$. This group scheme is called the Tannakian fundamental group of \textbf{T}. For a different fibre functor of \textbf{T}, we get a fundamental group which is isomorhic to $\pi_1(\textbf{T})$ by an inner automorphism, so the fundamental group is independent of the choice of the fibre functor. This fundamental group is an analogon of the absolute Galois group in usual Galois theory, for example the point group of this group scheme can be understood as the automorphism group of a "universal" Picard-Vessiot extension and is a prolinear group (the projective limit of linear algebraic groups). In this way the solvability of embedding problems is a measure of the freeness of $\pi_1(\textbf{T})$. \\ 
In this paper we only consider differential fields with an algebraically closed field of constants. The most important example for us is $F = K(t)$, which is the rational function field in one variable. \\ 
Differential Galois theory in positive characteristic is explained in detail in the preprint \cite{Mat01} from B.H. Matzat. From there we follow the idea laid out in the dissertation of T. Oberlies (\cite{Obe03}), who studied embedding problems in characteristic zero. At the end of this article we obtain the following result: \\

\textbf{Theorem:} \textit{Let $F$ be an algebraic function field in one variable over an algebraically closed field $K$ of positive characteristic. Then every ID-embedding problem in $\emph{\textbf{AffGr}}_K^{\emph{red}}$ over $F$ with reduced kernel has a proper solution.} \\ 

Here $\text{\textbf{AffGr}}_K^{\text{red}}$ denotes the category of reduced linear algebraic groups which are defined over $K$. It is an interesting open question whether this theorem holds in characteristic zero. 
\\
\\
\textbf{Acknowledgements.} I thank B. H. Matzat for giving me the motivation for this article and his invaluable support during the writing. My thanks also go to J. Hartmann and A. Maurischat for helpful comments and suggestions on the paper.

\section{Basics and Notation}

First we give a short introduction into differential Galois theory in positive characteristic, which is based on the preprint \cite{Mat01}. So for more details and explicit proofs the reader is referred to \cite{Mat01}. \\ 
Throughout this article, we assume that all rings and fields have characteristic $p > 0$. Further we assume that $K$ is an algebraically closed field. In the situation of positive characteristic one is confronted with the problem that the usual differentiation, extended to some transcendental extension of a differential field, causes new constants. This problem can be attacked by using iterative derivations, which were introduced for the first time by H. Hasse and F.K. Schmidt \cite{HS37}. \\
For this purpose let $R$ be a commutative ring. A family $\partial^{*} = (\partial^{(k)})_{k \in \mathbb{N}}$ of maps $\partial^{(k)}$$: R \rightarrow R$ is called an \textbf{iterative derivation} of $R$ if 
\begin{itemize} 
	\item[] $\partial^{(0)} = \text{id}_R$,  \ \ \ \ \ \ \ \ \ \ \ \ \ \ \ \ \ \ \ \ \ \ \ \ \ \ \ \ \ \ \ \ \ \  $\partial^{(k)}(a+b) = \partial^{(k)}(a) + \partial^{(k)}(b)$,  
	\item[] $\partial^{(k)}(a \cdot b) = \sum\limits_{i+j=k}{\partial^{(i)}(a)\partial^{(j)}(b)}$, \ \ \ \ \ \ $\partial^{(j)} \circ \partial^{(i)} = \binom{i+j}{i}\partial^{(i+j)}$ 
\end{itemize}
for all $a, b \in R$ and all $i, j, k \in \mathbb{N}$. The tuple $(R,\partial^{*})$ is called an \textbf{iterative differential ring} (ID-ring). An element $c \in R$ is a \textbf{(differential) constant}, if all its iterative derivatives vanish, i.e., $\partial^{(k)}(c) = 0$ for all $k \ge 1$. The set of all constants of $R$ is denoted by $C(R)$. \\ 
We observe that $(\partial^{(k)})^{p} = 0$ for $k > 0$ (this phenomenon is called the \textbf{trivial p-curvature}). Moreover, the iterative derivation $\partial^{*}$ is already determined by all $\partial^{(k)}$, where $k$ is a $p$-power. \\ 
An \textbf{iterative differential field} (ID-field) $F$ is an ID-ring which is a field. In this case the set of constants $C(F)$ is also a field. The most important example of an ID-field is the field of rational functions $K(t)$ together with the iterative derivation $\partial^{(k)}(t^{n}) = \binom{n}{k}t^{n-k}$, which will be denoted by $\partial^{*}_{t}$. Thus the field of constants is $K$. Contrary to ordinary differentiation in positive characteristic we do not obtain new constants here. \\ 
Let $(R,\partial^{*}_{R})$ be an ID-ring and set $R_l := \bigcap\limits_{j<l}\text{ker}(\partial^{(p^j)}_R)$. Then $(R_l,(\partial^{(kp^l)}_R)_{k \in \mathbb{N}})$ is an ID-ring (see \cite{Mat01}, Corollary 1.8). \\
Next we extend the concept of iterative derivations to modules. For this let $(R,\partial^{*}_R)$ be an ID-ring and $M$ be an $R$-module. A family $\partial^{*}_M = (\partial^{(k)}_M)_{k \in \mathbb{N}}$ of maps $\partial^{(k)}_M$$: M \rightarrow M$ satisfying 
\begin{itemize}
	\item[] $\partial^{(0)}_M = \text{id}_M$,  \ \ \ \ \ \ \ \ \ \ \ \ \ \ \ \ \ \ \ \ \ \ \ \ \ \ \ \ \ \ \  $\partial^{(k)}_M(x+y) = \partial^{(k)}_M(x) + \partial^{(k)}_M(y)$,  
	\item[] $\partial^{(k)}_M(a \cdot x) = \sum\limits_{i+j=k}{\partial^{(i)}_R(a)\partial^{(j)}_M(x)}$,  \ \ \  $\partial^{(j)}_M \circ \partial^{(i)}_M = \binom{i+j}{i}\partial^{(i+j)}_M$ 
\end{itemize}
for all $x, y \in M$ and all $a \in R$ is called an \textbf{iterative derivation} of $M$ and the tuple $(M,\partial^{*}_M)$ is called an \textbf{iterative differential module} (ID-module). The $C(R)$-module 
\begin{align*}
	V_M := \bigcap\limits_{k \in \mathbb{N}_+}{\text{ker}(\partial^{(k)}_M)}
\end{align*}
is called the \textbf{solution space} of $M$. We call $M$ a \textbf{trivial iterative differential module} if $M \cong V_M \otimes_{C(R)} R$. \\ 
Let $(M,\partial^{*}_{M})$ and $(N,\partial^{*}_{N})$ be two ID-modules, and let $\varphi \in \text{Hom}_R(M,N)$. Then $\varphi$ is called an \textbf{iterative differential homomorphism} (ID-homomorphism), if 
\begin{align*}
	\varphi \circ \partial^{(k)}_{M} = \partial^{(k)}_{N} \circ \varphi
\end{align*}
for all $k \in \mathbb{N}$. Further, we denote by $\textbf{IDMod}_R$ the category of all finitely generated iterative differential modules over $R$ with ID-homomorphisms as morphisms. \\ 
If $F$ is an ID-field with algebraically closed field of constants $K$, then by \cite{Mat01}, Remark 2.6, the category $\textbf{IDMod}_F$ together with the forgetful functor 
\begin{align*}
	\omega: \textbf{IDMod}_F \rightarrow \textbf{Vect}_K, \ \ \ (M,\partial^{*}_{M}) \mapsto V_{M \otimes_F E}
\end{align*} 
to the category of $K$-vector spaces as fibre functor is a \emph{neutral Tannakian category}, for a suitable extension $E/F$ (see \cite{Del90} for a definition and properties). \\
By \cite{Mat01}, Theorem 2.8, we get the following connection between ID-modules and projective systems. 

\begin{thm} \label{thm proj equ}
Let $(F,\partial^{*}_{F})$ be an ID-field of characteristic $p > 0$. Then the category $\emph{\textbf{IDProj}}_F$ of projective systems $(N_l,\psi_l)_{l \in \mathbb{N}}$ over $F$ with the properties 
\begin{enumerate}
	\item $N_l$ is an $F_l$-vector space of finite dimension and $\psi_l$ is $F_{l+1}$-linear 
	\item each $\psi_l$ uniquely extends to an $F_l$-isomorphism 
	\begin{align*}
		\widetilde{\psi}_l: N_{l+1} \otimes_{F_{l+1}} F_l \rightarrow N_l
	\end{align*}
\end{enumerate} 
is equivalent to the category $\emph{\textbf{IDMod}}_F$. 
\end{thm}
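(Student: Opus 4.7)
The plan is to construct mutually inverse functors between $\textbf{IDMod}_F$ and $\textbf{IDProj}_F$, exploiting two basic facts about iterative derivations in characteristic $p$: the whole family $\partial^{*}$ is determined by its $p$-power components $\partial^{(p^j)}$, and for $a\in F_l$ one has $\partial^{(\nu)}(a)=0$ for every $0<\nu<p^l$ (write $\nu$ in base $p$ and apply the composition rule $\partial^{(j)}\circ\partial^{(i)}=\binom{i+j}{i}\partial^{(i+j)}$ together with the trivial $p$-curvature).

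\textbf{Forward functor $\Phi$.} Given $(M,\partial^{*}_M)$, set $N_l:=\bigcap_{j<l}\ker(\partial^{(p^j)}_M)\subseteq M$ and take $\psi_l\colon N_{l+1}\hookrightarrow N_l$ to be the inclusion (which is automatically $F_{l+1}$-linear). A short Leibniz computation --- every summand $\partial^{(\nu)}(a)\partial^{(p^j-\nu)}(x)$ in $\partial^{(p^j)}(ax)$ vanishes for $a\in F_l$, $x\in N_l$, $j<l$ by the observation above --- shows that $N_l$ is $F_l$-stable. Condition~(1) then reduces to finite-dimensionality of $N_l$ over $F_l$, and condition~(2) to $\widetilde{\psi}_l\colon N_{l+1}\otimes_{F_{l+1}}F_l\cong N_l$; by induction both reduce to the case $l=0$, namely that $M$ admits an $F$-basis contained in $N_1$. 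For this I would invoke the Cartier-type descent result for modules of trivial $p$-curvature (cf.\ \cite{Mat01}).

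\textbf{Inverse functor $\Psi$.} Given $(N_l,\psi_l)_{l\in\mathbb{N}}$, let $M:=N_0$ as underlying $F$-module. By (2) applied iteratively, any $F_{l+1}$-basis $B_{l+1}$ of $N_{l+1}$ is simultaneously an $F_l$-basis of $N_l$ and an $F$-basis of $M$. Define $\partial^{(p^l)}_M$ as the unique additive map satisfying $\partial^{(p^l)}_M(av)=\partial^{(p^l)}_F(a)v$ for $a\in F$, $v\in B_{l+1}$; independence from the choice of $B_{l+1}$ follows since $\partial^{(p^l)}_M$ is then the unique $F_l$-semilinear extension of the zero map on $N_{l+1}$. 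The remaining $\partial^{(k)}_M$ are forced by the composition rule. The axioms of an iterative derivation --- crucially the commutation $\partial^{(p^{l_1})}_M\circ\partial^{(p^{l_2})}_M=\partial^{(p^{l_2})}_M\circ\partial^{(p^{l_1})}_M$ for $l_1\neq l_2$ --- are verified by choosing, at level $\max(l_1,l_2)+1$, a single basis of $M$ compatible with both filtrations and invoking the corresponding identity for $\partial^{*}_F$. Morphisms of projective systems then transport verbatim to ID-homomorphisms.

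\textbf{Equivalence and main obstacle.} Starting from a projective system, $\Phi\circ\Psi$ recovers the filtration by construction (each $\partial^{(p^j)}_M$ annihilates $N_l$ for $j<l$, and a rank count via condition~(2) forces equality); starting from an ID-module, the uniqueness characterization of $\partial^{(p^l)}_M$ via any $F_{l+1}$-basis of the intrinsic $N_{l+1}$ shows $\Psi\circ\Phi=\mathrm{id}$. The principal obstacle is condition~(2) in the forward direction, i.e.\ proving $\dim_{F_l}N_l=\dim_F M$ for all $l$: this is a descent statement along the purely inseparable tower $F_l/F_{l+1}$ in which the trivial $p$-curvature is used essentially. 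Once this ingredient is imported from \cite{Mat01}, the rest of the equivalence is bookkeeping with the Hasse--Schmidt axioms.
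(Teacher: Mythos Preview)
The paper does not give its own proof of this theorem: it is stated with the preface ``By \cite{Mat01}, Theorem~2.8, we get the following connection\ldots'' and no argument is supplied. Your sketch is the standard construction and is essentially what one finds in Matzat's preprint; the identification of the descent step $N_{l+1}\otimes_{F_{l+1}}F_l\cong N_l$ (trivial $p$-curvature / Cartier-type lemma) as the only nontrivial ingredient is exactly right, and the rest of your bookkeeping with the Hasse--Schmidt axioms is sound.
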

Where $F_l := \bigcap\limits_{j<l}\emph{ker}(\partial^{(p^j)}_F)$, as above. \\ 
Let $F$ be an ID-field with field of constants $K$ and let $M \in \textbf{IDMod}_F$ be an $n$-dimensional ID-module, then we get an associated \textbf{iterative differential equation} (IDE) 
\begin{align*} \tag{$\ast$}
	\partial^{(p^l)}(\emph{\textbf{y}}) = A_l\emph{\textbf{y}}
\end{align*} 
with $A_l = \partial^{(p^l)}_F(D_0 \cdots D_l)(D_0 \cdots D_l)^{-1}$, where $D_l \in \text{GL}_n(F_l)$ are the matrices of $\psi_l$ with respect to a basis $B_l$ of $M_l$. Let $(R,\partial^{*}_{R})$ be an ID-ring with $R \ge F$ and such that $\partial^{*}_{R}$ extends $\partial^{*}_{F}$, i.e., $\partial^{*}_{R}|_F = \partial^{*}_{F}$. A matrix $Y \in \text{GL}_n(R)$ is called a \textbf{fundamental solution matrix} for the IDE ($\ast$) if $\partial^{(p^l)}(Y) = A_lY$ for all $l \in \mathbb{N}$. \\
The ring $R$ is called an \textbf{iterative Picard-Vessiot ring} (IPV-ring) for the IDE ($\ast$) if it satisfies the following conditions: 
\begin{enumerate}
	\item $R$ is a simple ID-ring (i.e., contains no nontrivial proper ID-ideals), 
	\item the IDE has a fundamental solution matrix $Y$ with coefficients in $R$, 
	\item $R$ is generated over $F$ by the coefficients of $Y$ and $\text{det}(Y)^{\text{-1}}$. 
\end{enumerate}

For each ID-module $M$ there exists an IPV-ring $R$, which is unique up to ID-isomorphisms. Each IPV-ring is an integral domain and we can build the quotient field $\text{Quot}(R) = E$ (IPV-field), which contains no new constants, i.e., $C(E) = K$. \\ 
The group $\text{Gal}_{\text{ID}}(E/F) := \text{Aut}_{\text{ID}}(E/F)$ of all ID-automorphisms is called the \textbf{iterative differential Galois group} of the extension $E/F$. This group is isomorphic to the $K$-rational points of an affine group scheme, which is defined over $K$. If $E/F$ is a separable extension, then $\text{Gal}_{\text{ID}}(E/F)$ is a reduced affine group scheme and the converse holds. In this article we consider mostly reduced affine group schemes, which are equivalent (as a category) to the reduced linear algebraic groups. Therefore $\text{Gal}_{\text{ID}}(E/F) \le \text{GL}_{n,K}$ and we denote by $\text{\textbf{AffGr}}_K^{\text{red}}$ the category of all reduced linear algebraic groups, which are defined over $K$, and with homomorphisms of reduced affine group schemes as morphisms. \\ 
By the following theorem we see that the iterative differential Galois theory is a generalization of the usual Galois theory (see \cite{Mat01}, Theorem 3.12). 
\begin{thm} \label{finite gal extension}
Let $F/K$ be an algebraic function field in one variable over $K$. Then the following are equivalent: 
\begin{enumerate}
	\item $E/F$ is a finite Galois extension with group $G$. 
	\item $E/F$ is an iterative Picard-Vessiot extension with finite differential Galois group $G$. 
\end{enumerate}
\end{thm}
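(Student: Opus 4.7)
The plan is to prove both implications separately. The easier direction is (2) $\Rightarrow$ (1): if $E/F$ is an IPV-extension with finite differential Galois group $G$, then $G$ is in particular a reduced (étale) group scheme, so by the equivalence stated earlier in this section $E/F$ is separable; the general IPV Galois correspondence then gives $[E:F] = |G|$ and $E^G = F$, so $E/F$ is a finite Galois extension in the classical sense, and the classical Galois group coincides with $G$ since every ID-automorphism is in particular a field automorphism fixing $F$ pointwise.

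For (1) $\Rightarrow$ (2) I would proceed in three steps. First, extend $\partial^{*}_F$ uniquely to an iterative derivation $\partial^{*}_E$ on $E$; this uses that $E/F$ is finite separable, the iterative analogue of the standard Hasse--Schmidt extension result (compare \cite{Mat01}). The uniqueness is essential: for each $\sigma \in G$, the family $(\sigma^{-1} \circ \partial^{(k)}_E \circ \sigma)_{k \in \mathbb{N}}$ is another iterative derivation on $E$ extending $\partial^{*}_F$ and so must coincide with $\partial^{*}_E$; thus $\sigma$ commutes with every $\partial^{(k)}_E$, yielding $G \subseteq \text{Gal}_{\text{ID}}(E/F)$. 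Second, check that $C(E) = K$: the constant field $C(E)$ is algebraic over $C(F) = K$ because $E/F$ is algebraic, and $K$ is algebraically closed.

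Third, realize $E$ as an IPV-field over $F$. View $M := E$ as an ID-module over $F$ of dimension $n = [E:F]$; because $E$ is a field it is automatically a simple ID-ring, so tensoring with $E$ produces a solution space $V_{M \otimes_F E}$ of $K$-dimension $n$. A $K$-basis of this solution space, written in terms of an $F$-basis of $M$, furnishes a fundamental solution matrix $Y \in \text{GL}_n(E)$ for the associated IDE. One then shows that the $F$-subalgebra $R := F[Y_{ij}, \det(Y)^{-1}] \subseteq E$ is a simple ID-ring containing a fundamental solution matrix, hence is an IPV-ring with quotient field contained in $E$; since $E$ has no new constants and $R$ already carries a fundamental solution matrix of size $n$, the quotient field must equal $E$. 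Combined with the obvious inclusion $\text{Gal}_{\text{ID}}(E/F) \subseteq \text{Aut}(E/F) = G$, this shows both that $E/F$ is an IPV-extension and that $\text{Gal}_{\text{ID}}(E/F) = G$.

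The main obstacle, in my estimation, is the third step: concretely, one has to argue that the subring generated by the fundamental solution matrix really fills out all of $E$ (equivalently, that passing from $F$ to the quotient field of the IPV-ring recovers $E$ and not a proper subfield). The first two steps are essentially formal consequences of the uniqueness of the extension of iterative derivations to a finite separable extension and of the IPV Galois correspondence; once one has the fundamental solution matrix in hand, the rest of the argument reduces to a dimension count using $C(E) = K$ and $|G| = [E:F]$.
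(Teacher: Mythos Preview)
The paper does not prove this theorem at all; it is stated with a reference to \cite{Mat01}, Theorem~3.12, and no argument is given. So there is nothing to compare against, and your proposal has to be judged on its own merits.

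Your outline is essentially the standard proof and is correct. Two small comments. In $(2)\Rightarrow(1)$ you invoke ``finite implies reduced (\'etale)''; strictly speaking, finiteness of the group of $K$-points does not by itself force the underlying group scheme to be reduced in characteristic $p$. In the paper's setting, however, the Galois group scheme in question is taken to be reduced (this is the standing convention for $\textbf{AffGr}_K^{\text{red}}$), so separability of $E/F$ follows and then Artin's lemma plus the Galois correspondence finish the job as you say.

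For $(1)\Rightarrow(2)$ your first two steps are exactly right; the key fact is the uniqueness of the extension of an iterative derivation along a finite separable extension, which forces every $\sigma\in G$ to commute with $\partial^{*}_{E}$. For the third step, the obstacle you flag can be resolved cleanly without a dimension count: since $E/F$ is algebraic, $R=F[Y_{ij},\det(Y)^{-1}]$ is already a field $E'\subseteq E$, and $G$ acts on $E'$ because it permutes fundamental solutions via $\sigma(Y)=Y C_{\sigma}$. Under the identification $E\otimes_F E\cong \prod_{\sigma\in G}E$ the columns of $Y$ correspond to the primitive idempotents, on which $G$ acts by the regular representation; hence $\sigma\mapsto C_{\sigma}$ is injective, $G$ acts faithfully on $E'$, and $[E':F]\ge|G|=[E:F]$ gives $E'=E$. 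Equivalently, $E$ is itself a simple ID-ring with no new constants, generated over $F$ by the entries of $Y$, so $E$ already satisfies the axioms of an IPV-ring for the IDE attached to $M=E$.
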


Now we can formulate the main theorem of differential Galois theory in positive characteristic. 

\begin{thm}[\textbf{Galois Correspondence}]
Let $(F,\partial^{*}_{F})$ be an ID-field with algebraically closed field of constants $K$ and let $M \in \emph{\textbf{IDMod}}_F$. Let $E/F$ be an IPV-extension for $M$ and let $\mathcal{G} \le \emph{GL}_{n,K}$ be a reduced linear algebraic group such that $\mathcal{G}(K) = \emph{Gal}_{\emph{ID}}(E/F)$. \\ 
Then 
\begin{enumerate} 
	\item there is an antiisomorphism of the lattices  
		\begin{align*}
			\textit{\textbf{H}} := \{ \mathcal{H} | \; \mathcal{H} \le \mathcal{G} \text{ is a Zariski closed reduced linear algebraic group} \}, 
		\end{align*}
		and 
		\begin{align*}
			\textit{\textbf{L}} := \{ L | \; L \text{ is an ID-field } F \le L \le E, \text{ such that } E/L \text{ is separable} \}. 
		\end{align*} 
		given by $ \Psi: \textit{\textbf{H}} \rightarrow \textit{\textbf{L}}, \mathcal{H} \mapsto E^{\mathcal{H}(K)}$ and $\Psi^{-1}: 				\textit{\textbf{L}} \rightarrow \textit{\textbf{H}}, L \mapsto \mathcal{H}$, where $\mathcal{H}(K) := \emph{Gal}_{\emph{ID}}(E/L)$; 	
	\item if $\mathcal{H} \unlhd \mathcal{G}$ is a Zariski closed reduced normal subgroup, then $L := E^{\mathcal{H}(K)}$ is an IPV-extension of $F$ with Galois group $(\mathcal{G}/\mathcal{H})(K)$.
\end{enumerate}
\end{thm}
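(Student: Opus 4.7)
The plan is to mimic the classical Galois correspondence strategy, adapted to the iterative differential setting with the positive characteristic separability requirement. First I would verify that the two maps $\Psi$ and $\Psi^{-1}$ are well-defined, then show they are mutually inverse and order-reversing, and finally handle the normality statement in (2).

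For well-definedness of $\Psi$: given a Zariski closed reduced subgroup $\mathcal{H} \le \mathcal{G}$, the fixed ring $E^{\mathcal{H}(K)}$ is automatically an ID-subring of $E$ since every element of $\mathcal{H}(K) \le \mathrm{Gal}_{\mathrm{ID}}(E/F)$ commutes with the iterative derivations. To see $E/L$ is separable, I would use that $E$ is the quotient field of an IPV-ring $R/F$, that $\mathrm{Gal}_{\mathrm{ID}}(E/L) = \mathcal{H}(K)$ (to be established below), and the characterization of separability in terms of the Galois group scheme $\mathcal{H}$ being reduced, already stated in the excerpt. For well-definedness of $\Psi^{-1}$: given $F \le L \le E$ with $E/L$ separable, I would show $E/L$ is itself an IPV-extension by taking the same fundamental solution matrix $Y$ that generates $R$ over $F$, observing that $R \cdot L$ is a simple ID-ring over $L$ generated by $Y$ and $\det(Y)^{-1}$, hence an IPV-ring, and then identify $\mathrm{Gal}_{\mathrm{ID}}(E/L)$ with the $K$-points of a closed reduced subgroup $\mathcal{H} \le \mathcal{G}$ (closedness because ID-automorphisms of $E/L$ embed in $\mathcal{G}(K)$ as the subset fixing $L$, cut out by polynomial equations in the matrix entries).

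The heart of the proof is the two identities $\Psi^{-1}\Psi = \mathrm{id}$ and $\Psi \Psi^{-1} = \mathrm{id}$. For $\Psi\Psi^{-1}(L) = L$, i.e., $E^{\mathrm{Gal}_{\mathrm{ID}}(E/L)} = L$, I would apply the fundamental fixed-field property of IPV-extensions $E/L$ (as in \cite{Mat01}), which holds because $C(E) = K$ is algebraically closed. For $\Psi^{-1}\Psi(\mathcal{H}) = \mathcal{H}$, set $L := E^{\mathcal{H}(K)}$; one inclusion $\mathcal{H}(K) \subseteq \mathrm{Gal}_{\mathrm{ID}}(E/L)(K)$ is tautological, and for the reverse I would argue that the subgroup scheme $\mathcal{H}'$ associated to $\mathrm{Gal}_{\mathrm{ID}}(E/L)$ contains $\mathcal{H}$ and has the same fixed field, forcing $\mathcal{H}' = \mathcal{H}$; here I use crucially that $\mathcal{H}$ is reduced and $K$ is algebraically closed so that $\mathcal{H}(K)$ is Zariski dense in $\mathcal{H}$, and that no proper closed reduced subgroup of $\mathcal{H}$ can have the same fixed field. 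Both $\Psi$ and $\Psi^{-1}$ are manifestly inclusion-reversing.

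For part (2), suppose $\mathcal{H} \trianglelefteq \mathcal{G}$ is closed, reduced, and normal, and let $L := E^{\mathcal{H}(K)}$. Normality implies $\mathcal{G}(K)$ stabilizes $L$ setwise, so restriction gives a homomorphism $\mathcal{G}(K) \to \mathrm{Aut}_{\mathrm{ID}}(L/F)$ with kernel exactly $\mathcal{H}(K)$, inducing an injection $(\mathcal{G}/\mathcal{H})(K) \hookrightarrow \mathrm{Aut}_{\mathrm{ID}}(L/F)$. To promote $L/F$ to an IPV-extension realizing $\mathcal{G}/\mathcal{H}$, I would exhibit an ID-submodule $M' \subseteq M^{\otimes}$ (inside the tensor category generated by $M$) whose IPV-ring is $L$; concretely, take an $(\mathcal{G}/\mathcal{H})$-equivariant embedding into a tensor construction on $M$ using Tannakian duality, so that $L$ equals the IPV-ring of $M'$ over $F$, with Galois group $(\mathcal{G}/\mathcal{H})(K)$.

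The main obstacle I expect is the separability issue: showing that the reduced subgroups are exactly the ones giving separable intermediate extensions requires a careful identification of reducedness of the Galois group scheme with separability of $E/L$, and ensuring that the density statement "$\mathcal{H}(K)$ determines $\mathcal{H}$" is legitimate for all reduced closed subgroups — this is where algebraic closedness of $K$ and reducedness are indispensable and where the argument diverges most visibly from the characteristic zero case.
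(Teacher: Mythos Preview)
The paper does not actually prove this theorem: its entire proof is the citation ``\cite{Mat01}, Theorem 4.7.'' The Galois correspondence is treated as established background from Matzat's preprint, not as a result the present paper contributes, so there is no in-paper argument to compare your proposal against.

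Your sketch follows the standard strategy one finds in Matzat's notes and is broadly sound. The one place where your outline is thin is the direction $\Psi^{-1}\Psi(\mathcal{H}) = \mathcal{H}$: you write that $\mathcal{H}' := \mathrm{Gal}_{\mathrm{ID}}(E/E^{\mathcal{H}(K)})$ contains $\mathcal{H}$ and has the same fixed field, ``forcing $\mathcal{H}' = \mathcal{H}$,'' but this last implication is exactly the nontrivial content and is not self-evident. The usual route is via the torsor isomorphism $R \otimes_L R \cong R \otimes_K K[\mathcal{H}']$ for the IPV-ring $R$ of $E/L$, which yields $\mathrm{trdeg}(E/L) = \dim \mathcal{H}'$; combined with a Chevalley-type argument producing an $\mathcal{H}$-invariant element of $R$ not fixed by any strictly larger reduced closed subgroup, one gets $\mathcal{H} = \mathcal{H}'$. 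Without something of this sort the step is a genuine gap. The separability/reducedness equivalence you invoke is indeed the right tool for matching the lattices in positive characteristic, and your handling of part (2) via a Tannakian subobject is the standard one.
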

\begin{proof}
\cite{Mat01}, Theorem 4.7. 
\end{proof} 

After the basics of the differential Galois theory, we need some definitions about morphisms in $\text{\textbf{AffGr}}_K^{\text{red}}$. 

\begin{defn} \label{def epi}
Let $\beta$$: \widetilde{\mathcal{G}} \rightarrow \mathcal{G}$ be an epimorphism in $\text{\textbf{AffGr}}_K^{\text{red}}$. 
\begin{itemize} 
	\item $\beta$ is called \textbf{split}, if there exists a monomorphism $\sigma$$: \mathcal{G} \hookrightarrow \widetilde{\mathcal{G}}$, such that $\beta \circ \sigma = \text{id}_{\mathcal{G}}$. In this case we say that $\sigma$ is a \textbf{homomorphic section} for $\beta$. 
	\item $\beta$ is called \textbf{direct split}, if $\beta$ is split and $\sigma(\mathcal{G})$ is a normal subgroup of $\widetilde{\mathcal{G}}$  (that means $\widetilde{\mathcal{G}} \cong \text{ker}(\beta) \times \sigma(\mathcal{G})$).
	\item We say that $\beta$ is a \textbf{Frattini-epimorphism}, if $\widetilde{\mathcal{G}}$ is the only reduced closed supplement of $\text{ker}(\beta)$ in $\widetilde{\mathcal{G}}$, i.e., any $\mathcal{U}$ in $\widetilde{\mathcal{G}}$ which is reduced, closed, and satisfies $\text{ker}(\beta)\cdot\mathcal{U} = \widetilde{\mathcal{G}}$ already equals $\widetilde{\mathcal{G}}$. 
\end{itemize}
\end{defn}

The following theorem of A. Borel and J.P. Serre is very important for considering non-connected groups.

\begin{thm} \label{thm BS}
Let $\mathcal{G}$ be a reduced affine group scheme. Then there exists a finite supplement $H$ for the connected component $\mathcal{G}^{\circ}$ in $\mathcal{G}$, i.e., $\mathcal{G} = \mathcal{G}^{\circ} \cdot H$. 
\end{thm}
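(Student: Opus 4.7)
This is the classical finiteness theorem of Borel and Serre, so I would sketch the standard approach rather than give a full proof. The plan is to reduce to the case where $\mathcal{G}^\circ$ is a torus using conjugacy of maximal tori, then exploit divisibility of the torus over the algebraically closed field $K$ to lift generators of the component group $\pi_0(\mathcal{G}) := \mathcal{G}/\mathcal{G}^\circ$ to finite-order elements. Throughout, I use that $\pi_0(\mathcal{G})$ is a finite étale $K$-group scheme and, since $K$ is algebraically closed, coincides with a finite constant group.

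For the first reduction, let $T$ be a maximal torus of $\mathcal{G}^\circ$ and set $N := N_\mathcal{G}(T)$. Given any $g \in \mathcal{G}(K)$, the conjugate $gTg^{-1}$ is again a maximal torus of $\mathcal{G}^\circ$, so by Borel's conjugacy theorem there exists $h \in \mathcal{G}^\circ(K)$ with $h\, gTg^{-1}\, h^{-1} = T$, whence $hg \in N(K)$. Thus $\mathcal{G} = \mathcal{G}^\circ \cdot N$, and any finite $H \le N$ with $H \cdot N^\circ = N$ automatically satisfies $H \cdot \mathcal{G}^\circ = \mathcal{G}^\circ \cdot H \cdot N^\circ = \mathcal{G}^\circ \cdot N = \mathcal{G}$. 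Inducting on $\dim \mathcal{G}$, together with further reductions via the unipotent radical of $\mathcal{G}^\circ$ and the derived subgroup of its reductive quotient, I would bring the problem down to the base case $\mathcal{G}^\circ = T$.

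In the base case, I would pick representatives $g_1, \dots, g_r \in \mathcal{G}(K)$ for the cosets of $T(K)$ in $\mathcal{G}(K)$, with $g_i^{n_i} \in T(K)$ for suitable $n_i$. Since $T(K) \cong (K^\times)^{\dim T}$ is divisible (because $K$ is algebraically closed), one can solve $s_i^{n_i} = g_i^{n_i}$ in $T(K)$ and replace $g_i$ by $g_i s_i^{-1}$ so that each modified $g_i$ has finite order. The finite subgroup of $\mathcal{G}(K)$ generated by the modified $g_i$ (together with any finite torsion corrections coming from commutator relations) is the desired supplement $H$.

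\textbf{Main obstacle.} The hardest step is the simultaneous finite-order lifting of all generators $g_1, \dots, g_r$: the naive one-at-a-time modification above kills each $g_i^{n_i}$ individually but destroys commutator relations, since $s_i$ need not commute with $g_j$. Borel and Serre overcome this by a finer analysis of the $\pi_0(\mathcal{G})$-action on $T$, producing a finite subgroup that surjects onto $\pi_0(\mathcal{G})$ even when the extension $1 \to T \to \mathcal{G} \to \pi_0(\mathcal{G}) \to 1$ does not split on the nose. Making this precise (and checking that the ``Levi-type'' reductions in positive characteristic really do land in the torus case, despite the failure of a global Levi decomposition) is the technical core of their argument.
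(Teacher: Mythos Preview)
The paper gives no proof of its own here; it simply cites \cite{BS64}, Lemme~5.11. Your sketch follows the broad outline of the classical Borel--Serre argument (reduce to the normalizer of a maximal torus, then exploit divisibility of $T(K)$), so there is nothing in the paper to compare against beyond the original reference.

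There is, however, a slip in your base case: solving $s_i^{n_i} = g_i^{n_i}$ in $T(K)$ does \emph{not} force $(g_i s_i^{-1})^{n_i} = 1$ unless $g_i$ centralizes $s_i$. Since $g_i$ only normalizes $T$, the expansion of $(g_i s_i^{-1})^{n_i}$ involves the twisted norm $\prod_{k=0}^{n_i-1}\phi_i^k(s_i)$ for $\phi_i = \mathrm{Int}(g_i)|_T$, and it is that equation you must solve. The single-element lift still succeeds---via the vanishing of $H^2(\langle \bar g_i\rangle, T(K))$ for a divisible coefficient module---but not by the naive $n_i$-th root you wrote; so the difficulty you flag under ``Main obstacle'' already appears one step earlier than you place it, with $g_i$ failing to commute with its own $s_i$. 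Your disclaimer that the simultaneous lift to a finite subgroup surjecting onto $\pi_0(\mathcal{G})$ is the ``technical core'' is accurate: that step is the genuine content of the Borel--Serre lemma and is not recoverable from the ingredients you have written down.
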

\begin{proof}
\cite{BS64}, Lemme 5.11. 
\end{proof}

Thus any reduced linear algebraic group $\mathcal{G}$ is a quotient of a semidirect product $\mathcal{G}^{\circ} \rtimes H$. Therefore it is advisable to consider semidirect products. 

\begin{defn}
Let $H$ be a finite group. 
\begin{itemize}
	\item A reduced, connected linear algebraic group $\mathcal{G}^{\circ}$ together with a group homomorphism $H \rightarrow \text{Aut}(\mathcal{G}^{\circ})$, is called an \textbf{$H$-group}. 
	\item Let $\widetilde{\mathcal{G}}^{\circ}$ and $\mathcal{G}^{\circ}$ be $H$-groups. Let $\beta$$: \widetilde{\mathcal{G}}^{\circ} \rightarrow \mathcal{G}^{\circ}$ be a morphism in $\text{\textbf{AffGr}}_K^{\text{red}}$. We call $\beta$ an \textbf{$H$-morphism}, if $\beta$ commutes with the action of $H$. 
	\item We say an $H$-epimorphism is \textbf{$H$-split}, if there exists a homomorphic section which is an $H$-morphism. 
\end{itemize} 
\end{defn}

For any $H$-group $\mathcal{G}^{\circ}$ we can build the semidirect product $\mathcal{G}^{\circ} \rtimes H$. Every $H$-epimorphism $\beta^{\circ}$$: \widetilde{\mathcal{G}}^{\circ} \rightarrow \mathcal{G}^{\circ}$ can be uniquely extended to an epimorphism $\beta$$: \widetilde{\mathcal{G}}^{\circ} \rtimes H \rightarrow \mathcal{G}^{\circ} \rtimes H$, such that $\beta|_H = \text{id}_H$. 

Such epimorphisms are very important for us, so we give them their own name. 

\begin{defn}
Let $H$ be a finite group. Let $\widetilde{\mathcal{G}}^{\circ}$ and $\mathcal{G}^{\circ}$ be two $H$-groups. An epimorphism $\beta$$: \widetilde{\mathcal{G}}^{\circ} \rtimes H \rightarrow \mathcal{G}^{\circ} \rtimes H$ is called \textbf{$H$-rigid}, if $\beta|_H = \text{id}_H$. \\
Then we say $\beta$ is \textbf{$H$-split} if there exists a homomorphic section which is the identity on $H$. \\ 
We call $\beta$ \textbf{subdirect $H$-split}, if $\beta$ is $H$-split and $\widetilde{\mathcal{G}} \cong \text{ker}(\beta) \times \mathcal{G}$. 
\end{defn} 

Let $\beta$$: \widetilde{\mathcal{G}}^{\circ} \rtimes H \rightarrow \mathcal{G}^{\circ} \rtimes H$ be an $H$-rigid epimorphism. Then the restriction $\beta^{\circ} := \beta|_{\widetilde{\mathcal{G}}}$$: \widetilde{\mathcal{G}}^{\circ} \rightarrow \mathcal{G}^{\circ}$ is an $H$-epimorphism, in particular we call $\beta^{\circ}$ the \textbf{connected component} of $\beta$. Note that $\beta$ is $H$-split if and only if $\beta^{\circ}$ is $H$-split.

\section{ID-Embedding Problems}

\begin{defn} \label{def ID-EBP}
Let $F$ be an ID-field with field of constants $K$, let $E/F$ be an IPV-extension with Galois group $\mathcal{G}(K) \cong \text{Gal}_{\text{ID}}(E/F)$, and let $\beta$$: \widetilde{\mathcal{G}} \rightarrow \mathcal{G}$ be an epimorphism in $\text{\textbf{AffGr}}_K^{\text{red}}$. The corresponding \textbf{iterative differential embedding problem} (ID-embedding problem) asks for the existence of an IPV-extension $\widetilde{E}/F$ and a monomorphism $\tilde{\alpha}$ which maps $\text{Gal}_{\text{ID}}(\widetilde{E}/F)$ onto a closed subgroup of $\widetilde{\mathcal{G}}(K)$, such that the diagram  
\begin{align*}
	\begin{xy}
  	\xymatrix{
        &    & \text{Gal}_{\text{ID}}(\widetilde{E}/F) \ar[r]^{\text{res}} \ar[d]_{\widetilde{\alpha}}  & \text{Gal}_{\text{ID}}(E/F) \ar[d]_{\cong}^{\alpha} & \\
     		1 \ar[r] &  \mathcal{A}(K) \ar[r] & \widetilde{\mathcal{G}}(K) \ar[r]^{\beta}  & \mathcal{G}(K) \ar[r] & 1 
  	}
	\end{xy} 
\end{align*} 
commutes. We denote such an ID-embedding problem by $\mathcal{E}(\alpha, \beta)$. The kernel $\mathcal{A}(K)$ of the map $\beta$ is also called the \textbf{kernel} of $\mathcal{E}(\alpha, \beta)$. We say that $\mathcal{E}(\alpha, \beta)$ is \textbf{split}, if $\beta$ is split (respectively direct split, $H$-split, subdirect $H$-split and Frattini). We call $\widetilde{\alpha}$ a \textbf{solution} of $\mathcal{E}(\alpha, \beta)$. Further we say that $\widetilde{\alpha}$ is a \textbf{proper solution} if $\widetilde{\alpha}$ is an isomorphism. 
\end{defn} 

Observe that by definition the affine group schemes $\widetilde{\mathcal{G}}$ and $\mathcal{G}$ are reduced, whereas the kernel $\mathcal{A}$ need not be reduced. \\
In order to solve such an ID-embedding problem we will make a decomposition into ID-embedding problems which are easier to solve. More precisely, a decomposition of an ID-embedding problem means a decomposition of the underlying epimorphism $\beta$ in the following sense: 

\begin{defn} \label{def decom}
Let $\mathcal{M}$ be a finite set of epimorphisms in $\text{\textbf{AffGr}}_K^{\text{red}}$. The set $\tilde{\mathcal{M}}$ is called an \textbf{elementary decomposition} if it is formed in the following way:  
	\begin{enumerate} 
		\item Replace $\beta_2 \in \mathcal{M}$ by epimorphisms $\beta_1$ and $\beta_3$ in $\text{\textbf{AffGr}}_K^{\text{red}}$ with $\beta_2 = \beta_1 \circ \beta_3$. 
		\item Replace $\beta_1 \in \mathcal{M}$ by an epimorphism $\beta_2$ in $\text{\textbf{AffGr}}_K^{\text{red}}$, so that an  epimorphism $\beta_3$ in $\text{\textbf{AffGr}}_K^{\text{red}}$ exists with $\beta_2 = \beta_1 \circ \beta_3$. 
	\end{enumerate} 
Let $\beta$$: \widetilde{\mathcal{G}} \rightarrow \mathcal{G}$ be an epimorphism in $\text{\textbf{AffGr}}_K^{\text{red}}$. Then $\beta$ is called  \textbf{decomposable} into $\beta_1, \ldots, \beta_n$, if there exist sets $\mathcal{M}_i$ ($1 \le i \le m$), with $\mathcal{M}_1 = \{\beta\}$ and $\mathcal{M}_m = \{\beta_1, \ldots, \beta_n\}$, such that $\mathcal{M}_{i+1 }$ is an elementary decomposition of $\mathcal{M}_i$ for all $i$. 
\end{defn} 

The next proposition shows how a decomposition of the underlying epimorphism affects the ID-embedding problem: 

\begin{prop} \label{prop decom ebp}
Let the following commutative diagram of epimorphisms in $\emph{\textbf{AffGr}}_K^{\emph{red}}$ be given. 
\begin{align*}
	\begin{xy}
  	\xymatrix{
      	&	\mathcal{G}_2	\ar@{->>}[rd]^{\beta_1} &	\\
      	\mathcal{G}_1 \ar@{->>}[rr]^{\beta_2} \ar@{->>}[ru]^{\beta_3} &	& \mathcal{G}_3 
  	}
	\end{xy} 
\end{align*}
\begin{enumerate}
	\item Let $\widetilde{\alpha}$$: \emph{Gal}_{\emph{ID}}(\widetilde{E}/F) \rightarrow \mathcal{G}_2(K)$ be a proper solution of the ID-embedding problem $\mathcal{E}(\alpha, \beta_1)$ and let $\overline{\alpha}$$: \emph{Gal}_{\emph{ID}}(\overline{E}/F) \rightarrow \mathcal{G}_1(K)$ be a (proper) solution of the ID-embedding problem $\mathcal{E}(\widetilde{\alpha}, \beta_3)$. \\
Then $\overline{\alpha}$ is also a (proper) solution of the ID-embedding problem $\mathcal{E}(\alpha, \beta_2)$.
	\item If conversely $\overline{\alpha}$$: \emph{Gal}_{\emph{ID}}(\overline{E}/F) \rightarrow \mathcal{G}_1(K)$ is a proper solution of the ID-embedding problem $\mathcal{E}(\alpha, \beta_2)$ and 
		\begin{align*} 
			\widetilde{E} := \overline{E}^{\emph{ker}(\beta_3)}, 
		\end{align*}
		then there exists exactly one monomorphism $\widetilde{\alpha}$$: \emph{Gal}_{\emph{ID}}(\widetilde{E}/F) \rightarrow \mathcal{G}_2(K)$, such that 
		\begin{align*} \tag{$\ast$}
			\widetilde{\alpha} \circ \emph{res} = \beta_3 \circ \overline{\alpha}.  
		\end{align*} 
		In addition $\tilde{\alpha}$ is a proper solution of the ID-embedding problem $\mathcal{E}(\alpha, \beta_1)$. 
\end{enumerate}
\end{prop}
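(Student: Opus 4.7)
The argument is essentially a diagram chase, with part~(2) of the Galois correspondence the only nontrivial ingredient. For part~(1), I would start from the defining relations of the two given solutions, namely $\beta_1 \circ \widetilde{\alpha} = \alpha \circ \text{res}_{\widetilde{E}/E}$ and $\beta_3 \circ \overline{\alpha} = \widetilde{\alpha} \circ \text{res}_{\overline{E}/\widetilde{E}}$, compose them, and use $\beta_2 = \beta_1 \circ \beta_3$ together with transitivity of restriction $\text{res}_{\widetilde{E}/E} \circ \text{res}_{\overline{E}/\widetilde{E}} = \text{res}_{\overline{E}/E}$ to obtain the required commutativity $\beta_2 \circ \overline{\alpha} = \alpha \circ \text{res}_{\overline{E}/E}$. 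The fact that $\overline{\alpha}$ is a monomorphism (resp.\ an isomorphism, if the solution of $\mathcal{E}(\widetilde{\alpha},\beta_3)$ is proper) carries over from the hypothesis without any extra work.

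For part~(2), since $\overline{\alpha}$ is a proper solution of $\mathcal{E}(\alpha,\beta_2)$ it identifies $\text{Gal}_{\text{ID}}(\overline{E}/F)$ with $\mathcal{G}_1(K)$; under this identification the fixed field $\widetilde{E} := \overline{E}^{\ker(\beta_3)(K)}$ is unambiguously defined. Since $\ker(\beta_3)$ is a Zariski closed normal subgroup of $\mathcal{G}_1$, the second clause of the Galois correspondence gives that $\widetilde{E}/F$ is itself an IPV-extension, with Galois group canonically isomorphic to $(\mathcal{G}_1/\ker(\beta_3))(K)$, and $\beta_3$ induces a further isomorphism to $\mathcal{G}_2(K)$. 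I would define $\widetilde{\alpha}$ to be the resulting composite isomorphism; the relation $(\ast)$ then holds by construction, and uniqueness of $\widetilde{\alpha}$ is immediate from surjectivity of $\text{res}_{\overline{E}/\widetilde{E}}$.

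To finish, I have to verify that $\widetilde{\alpha}$ solves $\mathcal{E}(\alpha,\beta_1)$. The inclusion $\ker(\beta_3) \subseteq \ker(\beta_2)$ yields $E \subseteq \widetilde{E}$ by the order-reversing Galois correspondence, so $\text{res}_{\widetilde{E}/E}$ is well defined. Composing $(\ast)$ on the left with $\beta_1$ and using $\beta_1\circ\beta_3=\beta_2$ gives $\beta_1 \circ \widetilde{\alpha} \circ \text{res}_{\overline{E}/\widetilde{E}} = \alpha \circ \text{res}_{\overline{E}/E}$, which collapses via surjectivity of $\text{res}_{\overline{E}/\widetilde{E}}$ and transitivity of restriction to the desired $\beta_1 \circ \widetilde{\alpha} = \alpha \circ \text{res}_{\widetilde{E}/E}$. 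The only slightly delicate point is the implicit identification through $\overline{\alpha}$ that legitimises the definition of $\widetilde{E}$ as a fixed field and the invocation of the Galois correspondence to realise $\widetilde{E}/F$ as an IPV-extension with the correct Galois group; everything else is purely formal diagram chasing, so I expect no genuine obstacle.
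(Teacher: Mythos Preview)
Your proposal is correct and follows essentially the same route as the paper. Part~(1) is the identical diagram chase (the paper just draws the commutative diagram and declares the claim proved), and in part~(2) your construction of $\widetilde{\alpha}$ via the Galois correspondence is exactly the paper's ``define $\widetilde{\alpha}$ by $(\ast)$; it is well-defined because $\beta_3$ and $\mathrm{res}$ have the same kernel, unique because $\mathrm{res}$ is surjective''---you have simply unpacked the well-definedness step more explicitly and added the verification that $E\subseteq\widetilde{E}$, which the paper leaves implicit.
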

\begin{proof}
\begin{enumerate}
	\item By the assumptions we have the following commutative diagram, which proves the claim: 
		\begin{align*}
		\begin{xy}
  	\xymatrix{
  			& \text{Gal}_{\text{ID}}(\widetilde{E}/F) \ar[d]^{\widetilde{\alpha}} \ar[rd]^{\text{res}} & \\
      	\text{Gal}_{\text{ID}}(\overline{E}/F) \ar[d]^{\overline{\alpha}} \ar[ru]^{\text{res}} &	\mathcal{G}_2(K)	\ar@{->>}[rd]^{\beta_1} &	\text{Gal}_{\text{ID}}(E/F) \ar[d]^{\alpha} \\
      	\mathcal{G}_1(K) \ar@{->>}[rr]^{\beta_2} \ar@{->>}[ru]^{\beta_3} &	& \mathcal{G}_3(K)  
  	}
		\end{xy} 
		\end{align*}
	\item We define $\widetilde{\alpha}$ by equation $(\ast)$. Then $\widetilde{\alpha}$ is well-defined, because $\beta_3$ and the restriction map have the same kernel. Since the restriction map is surjective, $\widetilde{\alpha}$	is unique. It is obvious that $\tilde{\alpha}$ is a proper solution of $\mathcal{E}(\alpha, \beta_1)$. 
\end{enumerate}
\end{proof}
By the second part of this proposition we can reduce the ID-embedding problem $\mathcal{E}(\alpha, \beta_1)$ to the ID-embedding problem $\mathcal{E}(\alpha, \beta_2)$. The first part provides a decomposition of the ID-embedding problem $\mathcal{E}(\alpha, \beta_2)$ into the ID-embedding problems $\mathcal{E}(\alpha, \beta_1)$ and $\mathcal{E}(\widetilde{\alpha}, \beta_3)$. But in this case the ID-embedding problem $\mathcal{E}(\widetilde{\alpha}, \beta_3)$ depends on the selected proper solution from $\mathcal{E}(\alpha, \beta_1)$. This problem can be attacked by the following definition.

\begin{defn} \label{defn emb}
An epimorphism in $\text{\textbf{AffGr}}_K^{\text{red}}$ is called an \textbf{embedding epimorphism} (over $F$) if all corresponding ID-embedding problems over $F$ have a proper solution. 
\end{defn}

The next proposition is a consequence of Proposition \ref{prop decom ebp}.

\begin{prop} \label{prop emb}
Let $\beta$$: \widetilde{\mathcal{G}} \rightarrow \mathcal{G}$ be an epimorphism in $\emph{\textbf{AffGr}}_K^{\emph{red}}$. If $\beta$ is decomposable into the embedding epimorphisms $\beta_1, \ldots, \beta_n$, then $\beta$ is an embedding epimorphism.  
\end{prop}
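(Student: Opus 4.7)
The plan is to prove the claim by downward induction on $i$, showing that at every stage of the decomposition chain $\mathcal{M}_1 = \{\beta\}, \mathcal{M}_2, \ldots, \mathcal{M}_m = \{\beta_1,\ldots,\beta_n\}$, every element of $\mathcal{M}_i$ is an embedding epimorphism. Specializing to $i=1$ yields the conclusion. The base case $i=m$ is the hypothesis. For the inductive step I look at the elementary decomposition turning $\mathcal{M}_i$ into $\mathcal{M}_{i+1}$ and split into the two types from Definition \ref{def decom}; elements of $\mathcal{M}_i$ that already belong to $\mathcal{M}_{i+1}$ are handled by the inductive hypothesis, so only the elements removed in passing from $\mathcal{M}_i$ to $\mathcal{M}_{i+1}$ need attention.

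Suppose the step has type (1): some $\gamma\in\mathcal{M}_i$ is replaced by two epimorphisms $\gamma_1,\gamma_3\in\mathcal{M}_{i+1}$ satisfying $\gamma = \gamma_1\circ \gamma_3$. By the inductive hypothesis, both $\gamma_1$ and $\gamma_3$ are embedding epimorphisms, and the goal is to show $\gamma$ is as well. Given an arbitrary ID-embedding problem $\mathcal{E}(\alpha,\gamma)$, I first use that $\gamma_1$ is embedding to obtain a proper solution $\widetilde{\alpha}$ of $\mathcal{E}(\alpha,\gamma_1)$, and then that $\gamma_3$ is embedding to obtain a proper solution $\overline{\alpha}$ of $\mathcal{E}(\widetilde{\alpha},\gamma_3)$. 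By Proposition \ref{prop decom ebp}(1), $\overline{\alpha}$ is then a proper solution of $\mathcal{E}(\alpha,\gamma)$, so $\gamma$ is indeed an embedding epimorphism.

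Suppose instead the step has type (2): some $\gamma_1\in\mathcal{M}_i$ is replaced by a single epimorphism $\gamma_2\in\mathcal{M}_{i+1}$ with $\gamma_2 = \gamma_1\circ\gamma_3$ for some epimorphism $\gamma_3$. By the inductive hypothesis $\gamma_2$ is an embedding epimorphism, and the goal is to deduce the same for $\gamma_1$. Here I crucially note that $\gamma_1$ and $\gamma_2$ have the same codomain, so given any ID-embedding problem $\mathcal{E}(\alpha,\gamma_1)$, the same $\alpha$ yields a legitimate ID-embedding problem $\mathcal{E}(\alpha,\gamma_2)$. Since $\gamma_2$ is embedding, this admits a proper solution $\overline{\alpha}\colon \text{Gal}_{\text{ID}}(\overline{E}/F)\to \mathcal{G}_1(K)$; setting $\widetilde{E} := \overline{E}^{\ker(\gamma_3)}$, Proposition \ref{prop decom ebp}(2) hands back the desired proper solution $\widetilde{\alpha}$ of $\mathcal{E}(\alpha,\gamma_1)$.

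The argument is essentially bookkeeping on top of Proposition \ref{prop decom ebp}; I do not foresee a significant obstacle. The only point requiring some care is that the two types of elementary decomposition demand the two different parts of Proposition \ref{prop decom ebp}, and in the type (2) case one must observe that $\gamma_1$ and $\gamma_2$ share a codomain, so that an ID-embedding problem for the former can be reinterpreted (with the same $\alpha$) as one for the latter before the proposition can be applied.
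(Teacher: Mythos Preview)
Your argument is correct and is exactly the approach the paper intends: the paper states the proposition as a direct consequence of Proposition~\ref{prop decom ebp} without spelling out the induction, and you have simply supplied the routine details of that consequence.
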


\section{Decomposition of ID-Embedding Problems} 

All results of this section are taken from \cite{Obe03}. We give the proofs only for completeness, since the work of T. Oberlies \cite{Obe03} is up to now only published in german. \\ 
The first lemma gives us the backround to use the theory of algebraic groups to decompose an epimorphism. More precisely, a ``decomposition'' of the kernel induces a decomposition of the epimorphism. 

\begin{lemma} \label{lem decom1}
Let $\beta$$: \widetilde{\mathcal{G}} \rightarrow \mathcal{G}$ be an epimorphism in $\emph{\textbf{AffGr}}_K^{\emph{red}}$ with kernel $\mathcal{A}$. Let $\mathcal{V}$ be a closed subgroup of $\mathcal{A}$ which is normal in $\widetilde{\mathcal{G}}$. Then $\beta$ is decomposable into the epimorphism $\beta_1$$: \widetilde{\mathcal{G}} \rightarrow \widetilde{\mathcal{G}}/\mathcal{V}$ and the epimorphism $\beta_2$$: \widetilde{\mathcal{G}}/\mathcal{V} \rightarrow \mathcal{G}$. \\ 
If in addition $\beta$ is split, then $\beta_2$ is also split. 
\end{lemma}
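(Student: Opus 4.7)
The plan is to build the factorization directly using the quotient group scheme $\widetilde{\mathcal{G}}/\mathcal{V}$. Since $\mathcal{V}$ is closed and normal in $\widetilde{\mathcal{G}}$, the quotient $\widetilde{\mathcal{G}}/\mathcal{V}$ exists as an affine group scheme over $K$ and the canonical map $\beta_1\colon \widetilde{\mathcal{G}} \to \widetilde{\mathcal{G}}/\mathcal{V}$ is an epimorphism. The inclusion $\mathcal{V} \subseteq \mathcal{A} = \ker(\beta)$ together with the universal property of the quotient then produces a unique morphism $\beta_2\colon \widetilde{\mathcal{G}}/\mathcal{V} \to \mathcal{G}$ satisfying $\beta = \beta_2 \circ \beta_1$; moreover $\beta_2$ is surjective because $\beta$ is.

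To verify that both maps really live in $\textbf{AffGr}_K^{\text{red}}$, I would identify $\mathcal{O}(\widetilde{\mathcal{G}}/\mathcal{V})$ with the subring of $\mathcal{V}$-invariants in $\mathcal{O}(\widetilde{\mathcal{G}})$. Since $\widetilde{\mathcal{G}}$ is reduced, $\mathcal{O}(\widetilde{\mathcal{G}})$ has no non-zero nilpotents, so neither does its subring of $\mathcal{V}$-invariants, and hence $\widetilde{\mathcal{G}}/\mathcal{V}$ is reduced even though $\mathcal{V}$ itself need not be. This is precisely an elementary decomposition of type (1) in Definition \ref{def decom}: starting from $\mathcal{M}_1 = \{\beta\}$ and passing to $\mathcal{M}_2 = \{\beta_1, \beta_2\}$ with $\beta = \beta_2 \circ \beta_1$ shows that $\beta$ is decomposable into $\beta_1$ and $\beta_2$.

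For the second assertion, suppose $\beta$ is split by a monomorphism $\sigma\colon \mathcal{G} \hookrightarrow \widetilde{\mathcal{G}}$. I would check that $\sigma_2 := \beta_1 \circ \sigma$ is a homomorphic section of $\beta_2$. The computation $\beta_2 \circ \sigma_2 = \beta_2 \circ \beta_1 \circ \sigma = \beta \circ \sigma = \text{id}_{\mathcal{G}}$ gives the section property; and if $\sigma_2(g) = 1$ then $\sigma(g) \in \ker(\beta_1) = \mathcal{V} \subseteq \ker(\beta)$, whence $g = \beta(\sigma(g)) = 1$. Thus $\sigma_2$ is also a monomorphism, so $\beta_2$ is split.

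There is no serious obstacle here: once the quotient $\widetilde{\mathcal{G}}/\mathcal{V}$ is in hand, the argument is purely formal manipulation with the universal property. The only delicate point worth flagging is the reducedness of $\widetilde{\mathcal{G}}/\mathcal{V}$, which is needed because $\mathcal{V}$ may fail to be reduced; this is handled by the invariants description of the coordinate ring noted above.
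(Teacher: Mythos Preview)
Your proof is correct and follows the same approach the paper intends: the paper's own proof is simply ``See Definition~\ref{def decom}'', so the factorization $\beta = \beta_2 \circ \beta_1$ through the quotient is exactly what is meant, and you have merely filled in the details (reducedness of $\widetilde{\mathcal{G}}/\mathcal{V}$ and the section for the split case) that the paper leaves implicit.
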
 
\begin{proof}
See Definition \ref{def decom}. 
\end{proof}

The next lemma show us that an epimorphism can be decomposed into epimorphisms of extreme cases.

\begin{lemma} \label{lem decom2}
Let $\beta$$: \widetilde{\mathcal{G}} \rightarrow \mathcal{G}$ be an epimorphism in $\emph{\textbf{AffGr}}_K^{\emph{red}}$. Then $\beta$ is decomposable into a Frattini-epimorphism and a split epimorphism (with the same kernel as $\beta$). 
\end{lemma}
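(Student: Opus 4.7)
The plan is to construct a reduced, closed, normal subgroup $\mathcal{V}$ of $\widetilde{\mathcal{G}}$ with $\mathcal{V} \leq \mathcal{A} := \ker(\beta)$ and then apply Lemma~\ref{lem decom1} to write $\beta = \beta_1 \circ \beta_3$ with $\beta_3 \colon \widetilde{\mathcal{G}} \to \widetilde{\mathcal{G}}/\mathcal{V}$ and $\beta_1 \colon \widetilde{\mathcal{G}}/\mathcal{V} \to \mathcal{G}$. The whole game is to choose $\mathcal{V}$ so that $\beta_3$ becomes a Frattini-epimorphism while $\beta_1$ becomes split.

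I would use the Noetherian property of the lattice of reduced closed subgroups of $\widetilde{\mathcal{G}}$ to pick a minimal reduced closed subgroup $\mathcal{U} \leq \widetilde{\mathcal{G}}$ with $\mathcal{A}\mathcal{U} = \widetilde{\mathcal{G}}$ (the family is nonempty since $\widetilde{\mathcal{G}}$ itself is such a supplement), and then define $\mathcal{V}$ to be the normal closure of $\mathcal{U} \cap \mathcal{A}$ in $\widetilde{\mathcal{G}}$. Since $\mathcal{A} \unlhd \widetilde{\mathcal{G}}$ contains $\mathcal{U} \cap \mathcal{A}$, one has $\mathcal{V} \leq \mathcal{A}$, and $\mathcal{V}$ is reduced, closed, and normal in $\widetilde{\mathcal{G}}$ by construction.

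Splitting of $\beta_1$ should come out by exhibiting $\mathcal{U}\mathcal{V}/\mathcal{V}$ as a complement of $\mathcal{A}/\mathcal{V}$ in $\widetilde{\mathcal{G}}/\mathcal{V}$. From the inclusions $\mathcal{U} \cap \mathcal{A} \subseteq \mathcal{V} \subseteq \mathcal{A}$ one deduces $\mathcal{U} \cap \mathcal{V} = \mathcal{U} \cap \mathcal{A}$, and the Dedekind modular law (applicable because $\mathcal{V} \leq \mathcal{A}$) then gives $\mathcal{U}\mathcal{V} \cap \mathcal{A} = (\mathcal{U}\cap\mathcal{A})\mathcal{V} = \mathcal{V}$, while $\mathcal{U}\mathcal{V}\mathcal{A} = \mathcal{U}\mathcal{A} = \widetilde{\mathcal{G}}$. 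Passing to the quotient, this produces the internal semidirect decomposition $\widetilde{\mathcal{G}}/\mathcal{V} = (\mathcal{A}/\mathcal{V}) \rtimes (\mathcal{U}\mathcal{V}/\mathcal{V})$ and hence a homomorphic section of $\beta_1$.

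The main obstacle is to verify that $\beta_3$ is Frattini. Suppose $\mathcal{W}$ is a reduced closed subgroup with $\mathcal{V}\mathcal{W} = \widetilde{\mathcal{G}}$; because $\mathcal{V} \leq \mathcal{A}$ one immediately has $\mathcal{A}\mathcal{W} = \widetilde{\mathcal{G}}$, so $\mathcal{W}$ is itself a reduced closed supplement of $\mathcal{A}$. Using the normality of $\mathcal{V}$ and the modular law applied to $\mathcal{V} \leq \mathcal{U}\mathcal{V}$, one checks $\mathcal{V} \cdot (\mathcal{W} \cap \mathcal{U}\mathcal{V}) = \mathcal{U}\mathcal{V}$, from which $\mathcal{U}_1 := \mathcal{W} \cap \mathcal{U}\mathcal{V}$ turns out to be another reduced closed supplement of $\mathcal{A}$ in $\widetilde{\mathcal{G}}$. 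The delicate remaining step — where one must follow the treatment in Oberlies's thesis \cite{Obe03} — is to combine this with the minimality of $\mathcal{U}$ and with the description of $\mathcal{V}$ as generated by conjugates of $\mathcal{U} \cap \mathcal{A}$ in order to produce a reduced closed supplement of $\mathcal{A}$ strictly inside $\mathcal{U}$, thereby contradicting minimality and forcing $\mathcal{W} = \widetilde{\mathcal{G}}$.
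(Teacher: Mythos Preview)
Your approach diverges from the paper's and carries a genuine gap. The paper does \emph{not} factor $\beta$ through a quotient of $\widetilde{\mathcal{G}}$; it uses the type-(2) elementary decomposition of Definition~\ref{def decom} to \emph{enlarge} the source. Concretely, with $\mathcal{U}$ a minimal reduced closed supplement of $\mathcal{A}$, one forms the external semidirect product $\mathcal{A}\rtimes\mathcal{U}$ together with the multiplication map $\psi\colon \mathcal{A}\rtimes\mathcal{U}\twoheadrightarrow\widetilde{\mathcal{G}}$, $(a,u)\mapsto au$. Since $\beta\circ\psi=\beta|_{\mathcal{U}}\circ\mathrm{pr}_{\mathcal{U}}$, Definition~\ref{def decom}(2) lets one replace $\beta$ by $\beta|_{\mathcal{U}}\circ\mathrm{pr}_{\mathcal{U}}$, and then Definition~\ref{def decom}(1) splits this composite into $\mathrm{pr}_{\mathcal{U}}$ (split, with kernel exactly $\mathcal{A}$) and $\beta|_{\mathcal{U}}$ (Frattini by minimality of $\mathcal{U}$). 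Both conclusions are then one-liners.

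Your route via Lemma~\ref{lem decom1} cannot reproduce the statement as written. The split factor $\beta_1\colon\widetilde{\mathcal{G}}/\mathcal{V}\to\mathcal{G}$ has kernel $\mathcal{A}/\mathcal{V}$, which coincides with $\mathcal{A}$ only when $\mathcal{V}=1$, i.e.\ only when $\mathcal{U}$ was already a complement and $\beta$ was split from the start; so the clause ``with the same kernel as $\beta$'' fails in precisely the non-split cases of interest. Independently, your Frattini argument for $\beta_3$ is left unfinished: you produce another supplement $\mathcal{U}_1=\mathcal{W}\cap\mathcal{U}\mathcal{V}$ of $\mathcal{A}$, but minimality of $\mathcal{U}$ gives no control over $\mathcal{U}_1$ (minimal supplements need not be conjugate, nor comparable), and nothing forces the normal closure of $\mathcal{U}\cap\mathcal{A}$ to sit inside every such $\mathcal{W}$. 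The idea you are missing is exactly the passage to $\mathcal{A}\rtimes\mathcal{U}$, where both the kernel condition and the Frattini property are immediate.
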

\begin{proof}
The reduced subgroups of $\widetilde{\mathcal{G}}$ correspond to the radical ideals of $K[\widetilde{\mathcal{G}}]$, the coordinate ring of $\widetilde{\mathcal{G}}$. Since $K[\widetilde{\mathcal{G}}]$ is Noetherian, there exists a minimal, reduced, closed supplement $\mathcal{U}$ for $\mathcal{A}$ in $\widetilde{\mathcal{G}}$. Therefore $\widetilde{\mathcal{G}} = \mathcal{U} \cdot \mathcal{A}$ and the following commutative diagram arises 
\begin{align*}
\begin{xy}
  \xymatrix{
      1 \ar[r] &  \mathcal{A} \ar[r] \ar[d]_{\cong} & \mathcal{A} \rtimes \mathcal{U} \ar[r]_{\text{pr}_{\mathcal{U}}} \ar@{->>}[d]^{\psi} & \mathcal{U} \ar@<-4pt>@{.>}[l] \ar[r] \ar@{->>}[d]^{\beta|_{\mathcal{U}}} & 1 \\
      1 \ar[r] &  \mathcal{A} \ar[r] & \widetilde{\mathcal{G}} \ar[r]^{\beta} & \mathcal{G} \ar[r] & 1, 
  }
\end{xy} 
\end{align*}
where $\psi$ maps $(a,u)$ to the product $a \cdot u$ in $\widetilde{\mathcal{G}}$. By Definition \ref{def decom} we see that $\beta$ is decomposable into $\beta|_{\mathcal{U}} \circ \text{pr}_{\mathcal{U}}$, which is decomposable into $\beta|_{\mathcal{U}}$ and $\text{pr}_{\mathcal{U}}$. Since $\mathcal{U}$ is minimal it follows that $\beta|_{\mathcal{U}}$ is a Frattini-epimorphism. 
\end{proof} 

Frattini ID-embedding problems and split ID-embedding problems are extreme cases in the following sense: Every split ID-embedding problem is solvable by definition. But it is questionable whether the solution is proper. For a Frattini ID-embedding problem the existence of a solution is not obvious. But if a solution exists, it is obvious by definition that this solution is proper (see Proposition \ref{prop frat ep}). \\
For our further proceeding we need the following classification of Frattini- epimorphisms with finite kernel. 

\begin{lemma} \label{lem frat class}
Let $\beta$$: \widetilde{\mathcal{G}} \rightarrow \mathcal{G}$ be an epimorphism in $\emph{\textbf{AffGr}}_K^{\emph{red}}$ with finite kernel $\mathcal{A}$. Let $\kappa$$: \widetilde{\mathcal{G}} \twoheadrightarrow \widetilde{\mathcal{G}}/\widetilde{\mathcal{G}}^{\circ}$ be the canonical epimorphism and let $\Phi(\widetilde{\mathcal{G}}/\widetilde{\mathcal{G}}^{\circ})$ the Frattini-subgroup of the finite group $\widetilde{\mathcal{G}}/\widetilde{\mathcal{G}}^{\circ}$. Then the following statements are equivalent: 
\begin{enumerate}
	\item $\beta$ is a Frattini-epimorphism 
	\item $\kappa(\mathcal{A}) \subseteq \Phi(\widetilde{\mathcal{G}}/\widetilde{\mathcal{G}}^{\circ})$. 
\end{enumerate}
\end{lemma}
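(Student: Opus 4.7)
The plan is to reduce the question about reduced closed supplements in the algebraic group $\widetilde{\mathcal{G}}$ to a question about supplements in the finite quotient $\widetilde{\mathcal{G}}/\widetilde{\mathcal{G}}^{\circ}$, where the Frattini characterization is classical.

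First I would show that every reduced closed supplement $\mathcal{U}$ of $\mathcal{A}$ in $\widetilde{\mathcal{G}}$ automatically contains the connected component $\widetilde{\mathcal{G}}^{\circ}$. Since $\mathcal{A}$ is finite, $\dim(\mathcal{U}\cdot\mathcal{A}) = \dim\mathcal{U}$, so from $\mathcal{U}\cdot\mathcal{A} = \widetilde{\mathcal{G}}$ we get $\dim\mathcal{U} = \dim\widetilde{\mathcal{G}}$; hence the connected component $\mathcal{U}^{\circ} \subseteq \widetilde{\mathcal{G}}^{\circ}$ agrees with $\widetilde{\mathcal{G}}^{\circ}$ by irreducibility of the latter, forcing $\widetilde{\mathcal{G}}^{\circ} \subseteq \mathcal{U}$.

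Next I would set up the correspondence. Sending $\mathcal{U} \mapsto \kappa(\mathcal{U})$ yields a supplement of $\kappa(\mathcal{A})$ in the finite group $\overline{\mathcal{G}} := \widetilde{\mathcal{G}}/\widetilde{\mathcal{G}}^{\circ}$, because $\kappa(\mathcal{U})\cdot\kappa(\mathcal{A}) = \kappa(\widetilde{\mathcal{G}}) = \overline{\mathcal{G}}$. Conversely, given a subgroup $\overline{\mathcal{U}} \le \overline{\mathcal{G}}$ with $\overline{\mathcal{U}}\cdot\kappa(\mathcal{A}) = \overline{\mathcal{G}}$, its preimage $\mathcal{U} := \kappa^{-1}(\overline{\mathcal{U}})$ is closed and reduced (being a finite union of $\widetilde{\mathcal{G}}^{\circ}$-cosets inside the reduced group $\widetilde{\mathcal{G}}$). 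To see $\mathcal{U}\cdot\mathcal{A} = \widetilde{\mathcal{G}}$, take $g \in \widetilde{\mathcal{G}}$, write $\kappa(g) = \kappa(u)\kappa(a)$ with $u\in\mathcal{U}$, $a\in\mathcal{A}$, so $g = u\,a\,b$ for some $b \in \widetilde{\mathcal{G}}^{\circ} \subseteq \mathcal{U}$; using that $\mathcal{A} = \ker(\beta)$ is normal in $\widetilde{\mathcal{G}}$, we have $ab = b(b^{-1}ab) = b\cdot a'$ with $a' \in \mathcal{A}$, whence $g = (ub)a' \in \mathcal{U}\cdot\mathcal{A}$. Since $\mathcal{U} \supseteq \widetilde{\mathcal{G}}^{\circ}$, the assignments $\mathcal{U}\mapsto\kappa(\mathcal{U})$ and $\overline{\mathcal{U}}\mapsto\kappa^{-1}(\overline{\mathcal{U}})$ are mutually inverse. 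In particular, $\mathcal{U} = \widetilde{\mathcal{G}}$ if and only if $\kappa(\mathcal{U}) = \overline{\mathcal{G}}$.

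To conclude, I would invoke the standard group-theoretic fact that, for a finite group $G$, a subgroup $N \le G$ satisfies $N \subseteq \Phi(G)$ if and only if the only supplement of $N$ in $G$ is $G$ itself: if $N \not\subseteq \Phi(G)$ pick a maximal subgroup $M$ not containing $N$, so $MN = G$ gives a proper supplement; conversely if $N \subseteq \Phi(G)$ and $HN = G$ with $H \lneq G$, then $H \le M$ for some maximal $M$, and $N \subseteq M$ forces $G = HN \subseteq M$, a contradiction. Applied to $N = \kappa(\mathcal{A})$ and $G = \overline{\mathcal{G}}$ and combined with the bijection of the previous paragraph, this yields exactly the equivalence of (1) and (2).

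The main obstacle is the elementary step that reduced closed supplements of $\mathcal{A}$ contain $\widetilde{\mathcal{G}}^{\circ}$ (used both to make $\kappa(\mathcal{U})$ detect $\mathcal{U}$ and to ensure the preimage is reduced); once this is in place, everything else is formal bookkeeping.
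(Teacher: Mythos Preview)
Your proof is correct and follows essentially the same route as the paper's: both first use a dimension argument to see that any reduced closed supplement of the finite kernel contains $\widetilde{\mathcal{G}}^{\circ}$, then pass the supplement condition through $\kappa$ to the finite quotient and invoke the classical characterization of the Frattini subgroup (the paper cites Huppert, Satz III.2, where you sketch the argument directly). Your write-up is a bit more explicit about the bijection and about why the preimage of a supplement is again a supplement, but the strategy is identical.
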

\begin{proof} 
'$\Longleftarrow$': Assume $\beta$ is not a Frattini-epimorphism. There exists a subgroup $\mathcal{B} \subsetneqq \widetilde{\mathcal{G}}$, such that $\widetilde{\mathcal{G}} = \mathcal{A} \cdot \mathcal{B}$. In this case $\text{ker}(\kappa) = \widetilde{\mathcal{G}}^{\circ} = \mathcal{B}^{\circ} \subseteq \mathcal{B}$ by dimension arguments. If $\kappa(\mathcal{B}) = \widetilde{\mathcal{G}}/\widetilde{\mathcal{G}}^{\circ}$, it follows that $\widetilde{\mathcal{G}} = \mathcal{B}$ (contradiction). Therefore $\kappa(\mathcal{B}) \subsetneqq \widetilde{\mathcal{G}}/\widetilde{\mathcal{G}}^{\circ}$ holds. Thus $\kappa(\mathcal{A}) \cdot \kappa(\widetilde{\mathcal{B}}) = \widetilde{\mathcal{G}}/\widetilde{\mathcal{G}}^{\circ}$ together with \cite{Hup67}, Satz III.2 implies that $\kappa(\mathcal{A}) \nsubseteq \Phi(\widetilde{\mathcal{G}}/\widetilde{\mathcal{G}}^{\circ})$. \\ 
'$\Longrightarrow$': Assume $\kappa(\mathcal{A}) \nsubseteq \Phi(\widetilde{\mathcal{G}}/\widetilde{\mathcal{G}}^{\circ})$. By \cite{Hup67}, Satz III.2 there exists a subgroup $\mathcal{S} \subsetneqq \widetilde{\mathcal{G}}/\widetilde{\mathcal{G}}^{\circ}$ with $\kappa(\mathcal{A}) \cdot \mathcal{S} = \widetilde{\mathcal{G}}/\widetilde{\mathcal{G}}^{\circ}$. With $\mathcal{B} := \kappa^{-1}(\mathcal{S})$ we have $\text{ker}(\kappa) \subseteq \mathcal{B} \subseteq \mathcal{B} \cdot \mathcal{A}$ and $\kappa(\mathcal{B} \cdot \mathcal{A}) = \widetilde{\mathcal{G}}/\widetilde{\mathcal{G}}^{\circ}$. Therefore $\mathcal{B}$$\cdot$$\mathcal{A} = \widetilde{\mathcal{G}}$ and we have that $\beta$ is not a Frattini-epimorphism, since $\mathcal{B} \subsetneqq \widetilde{\mathcal{G}}$.
\end{proof} 

The next lemmata are needed for the decomposition into $H$-rigid epimorphisms. 

\begin{lemma} \label{lem H-fratt}
Let $\mathcal{G} \in \emph{\textbf{AffGr}}_K^{\emph{red}}$. Then there exists a finite subgroup $H$ of $\mathcal{G}$ with $\mathcal{G} = \mathcal{G}^{\circ} \cdot H$, such that $\mu$$: \mathcal{G}^{\circ} \rtimes H \rightarrow \mathcal{G}$, $(g,h) \mapsto g \cdot h$ is a Frattini-epimorphism. 
\end{lemma}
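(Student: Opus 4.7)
My plan is to combine Theorem \ref{thm BS} with Lemma \ref{lem frat class} through a minimality argument on the finite supplement.

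First, I would apply Theorem \ref{thm BS} to fix some finite subgroup $H_0 \le \mathcal{G}$ with $\mathcal{G} = \mathcal{G}^{\circ} \cdot H_0$. Since $H_0$ is finite, I can choose $H \le H_0$ minimal with respect to inclusion among subgroups of $H_0$ satisfying $\mathcal{G}^{\circ} \cdot H = \mathcal{G}$. Because $H$ sits inside $\mathcal{G}$, it acts on $\mathcal{G}^{\circ}$ by conjugation, so the semidirect product $\mathcal{G}^{\circ} \rtimes H$ is well defined and the multiplication map $\mu\colon \mathcal{G}^{\circ} \rtimes H \to \mathcal{G}$, $(g,h) \mapsto g \cdot h$, is a surjective morphism in $\textbf{AffGr}_K^{\text{red}}$.

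Next I would identify the relevant kernels. From $gh = 1$ one reads off that
\[
\mathrm{ker}(\mu) = \{(h^{-1},h) \mid h \in \mathcal{G}^{\circ} \cap H\},
\]
which is finite and canonically isomorphic to $\mathcal{G}^{\circ} \cap H$. Moreover $(\mathcal{G}^{\circ} \rtimes H)^{\circ} = \mathcal{G}^{\circ} \times \{1\}$, so the canonical epimorphism $\kappa$ onto the component group is just the projection to $H$. Under this projection, $\kappa(\mathrm{ker}(\mu)) = \mathcal{G}^{\circ} \cap H$, viewed as a subgroup of $H$.

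To invoke Lemma \ref{lem frat class}, it suffices to check that $\mathcal{G}^{\circ} \cap H \subseteq \Phi(H)$. Here the minimality of $H$ enters: if some proper subgroup $S \subsetneq H$ satisfied $(\mathcal{G}^{\circ} \cap H)\cdot S = H$, then multiplying on the left by $\mathcal{G}^{\circ}$ (which absorbs $\mathcal{G}^{\circ}\cap H$) would give $\mathcal{G}^{\circ}\cdot S = \mathcal{G}^{\circ}\cdot H = \mathcal{G}$, contradicting the minimality of $H$ inside $H_0$. Hence $\mathcal{G}^{\circ} \cap H$ lies in every maximal subgroup of $H$, i.e.\ in $\Phi(H)$, and Lemma \ref{lem frat class} yields that $\mu$ is a Frattini-epimorphism.

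The only place where one has to be careful is the kernel identification and the clean separation between the geometric condition ``$\mathcal{G}^{\circ}\cdot H = \mathcal{G}$'' and the purely group-theoretic Frattini condition inside the finite component group $H$; once that translation is made explicit, the argument is essentially a one-line minimality trick, and I do not expect any serious obstacle beyond keeping this dictionary straight.
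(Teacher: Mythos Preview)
Your proposal is correct and follows essentially the same approach as the paper: choose a minimal finite supplement $H$ via Theorem~\ref{thm BS}, identify $\ker(\mu)$ and its image under the projection $\kappa$ to $H$ as $\mathcal{G}^{\circ}\cap H$, use minimality to force $\mathcal{G}^{\circ}\cap H \subseteq \Phi(H)$, and conclude with Lemma~\ref{lem frat class}. The only cosmetic difference is that the paper first phrases the minimality step as saying that $H \to H/(H\cap\mathcal{G}^{\circ})$ is a Frattini-epimorphism and then extracts $H\cap\mathcal{G}^{\circ}\subseteq\Phi(H)$ from Lemma~\ref{lem frat class}, whereas you argue the containment in $\Phi(H)$ directly.
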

\begin{proof}
By Theorem \ref{thm BS} there exists a finite subgroup $H$ of $\mathcal{G}$ with $\mathcal{G} = \mathcal{G}^{\circ} \cdot H$. Assume that $H$ is minimal with this property. \\ 
Then $H \rightarrow H/H \cap \mathcal{G}^{\circ}$ is a Frattini-epimorphism. Since, if $W$ is a proper subgroup of $H$ with $W \cdot (H \cap \mathcal{G}^{\circ}) = H$, then $\mathcal{G} = W \cdot \mathcal{G}^{\circ}$, which is a contradiction to the minimality of $H$. \\ 
By applying Lemma \ref{lem frat class} to the epimorphism $H \rightarrow H/H \cap \mathcal{G}^{\circ}$, we see that $H \cap \mathcal{G}^{\circ}$ lies in the Frattini-subgroup $\Phi(H)$ of $H$. \\ 
Next we will apply Lemma \ref{lem frat class} to $\mu$. In this case the canonical epimorphism $\kappa$ is the projection $\mathcal{G}^{\circ} \rtimes H \rightarrow H$. So we have
\begin{align*}
	\text{ker}(\mu) = \{ (g^{-1},g)| g \in H \cap \mathcal{G}^{\circ} \}
\end{align*} 
and therefore $\kappa(\text{ker}(\mu)) = H \cap \mathcal{G}^{\circ} \le \Phi(H)$ holds. 
\end{proof}

\begin{defn} \label{defn mu}
We call such an Frattini-epimorphism $\mu$$: \mathcal{G}^{\circ} \rtimes H \rightarrow \mathcal{G}^{\circ} \cdot H$, $(g,h) \mapsto g \cdot h$, an epimorphism \textbf{of type $\mu$}. 
\end{defn}

\begin{lemma} \label{lem H-semi split decom}
Let $\beta$$: \mathcal{A} \rtimes \mathcal{G}  \rightarrow \mathcal{G}$ be an epimorphism in $\emph{\textbf{AffGr}}_K^{\emph{red}}$ with reduced, connected kernel $\mathcal{A}$. Then there exists a finite subgroup $H$ of $\mathcal{G}$, such that $\beta$ is decomposable into a Frattini-epimorphism $\mu$ with finite kernel and an $H$-split, $H$-rigid epimorphism $\overline{\beta}$ with kernel $\mathcal{A}$. 
\end{lemma}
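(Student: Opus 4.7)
The plan is to use Lemma \ref{lem H-fratt} to supply the Frattini piece and then lift $\beta$ through it to obtain the $H$-rigid piece. First I would apply Lemma \ref{lem H-fratt} to $\mathcal{G}$ to produce a finite subgroup $H \le \mathcal{G}$ such that $\mu: \mathcal{G}^{\circ} \rtimes H \to \mathcal{G}$, $(g,h) \mapsto g\cdot h$, is a Frattini-epimorphism; its kernel $\{(g^{-1},g)\mid g \in H\cap\mathcal{G}^{\circ}\}$ is finite because $H$ is. Since the semidirect product structure on $\mathcal{A}\rtimes\mathcal{G}$ gives an action of $\mathcal{G}$ on $\mathcal{A}$, composition with $\mu$ yields an action of $\mathcal{G}^{\circ}\rtimes H$ on $\mathcal{A}$, and I would form the semidirect product $\widetilde{\mathcal{G}} := \mathcal{A} \rtimes (\mathcal{G}^{\circ} \rtimes H)$.

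Next I would define $\overline{\beta}: \widetilde{\mathcal{G}} \to \mathcal{G}^{\circ}\rtimes H$ as the projection onto the second factor. Its kernel is $\mathcal{A}$ by construction. Since both $\mathcal{A}$ and $\mathcal{G}^{\circ}$ are reduced and connected, the connected component of $\widetilde{\mathcal{G}}$ is $\widetilde{\mathcal{G}}^{\circ} = \mathcal{A}\rtimes\mathcal{G}^{\circ}$, so $\widetilde{\mathcal{G}} = \widetilde{\mathcal{G}}^{\circ}\rtimes H$ in the natural way, and clearly $\overline{\beta}|_H = \mathrm{id}_H$, so $\overline{\beta}$ is $H$-rigid. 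The map $\sigma: \mathcal{G}^{\circ}\rtimes H \to \widetilde{\mathcal{G}}$, $(g,h)\mapsto(1_{\mathcal{A}},g,h)$, is a group homomorphism (the identity element $1_{\mathcal{A}}$ is fixed under every action) and restricts to the identity on $H$, so $\overline{\beta}$ is $H$-split.

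Finally, to exhibit the decomposition in the sense of Definition \ref{def decom}, consider the map $\tilde{\mu}: \widetilde{\mathcal{G}} \to \mathcal{A}\rtimes\mathcal{G}$ sending $(a,g,h) \mapsto (a,g\cdot h)$. A direct check using the semidirect product multiplication rules (relying on $\mu$ being a group homomorphism) shows $\tilde{\mu}$ is a surjective morphism in $\textbf{AffGr}_K^{\mathrm{red}}$, and one verifies $\beta\circ\tilde{\mu} = \mu\circ\overline{\beta}$. Thus a type-2 elementary decomposition replaces $\mathcal{M}_1 = \{\beta\}$ by $\mathcal{M}_2 = \{\mu\circ\overline{\beta}\}$ (with $\beta_3 = \tilde{\mu}$), and then a type-1 elementary decomposition of $\mu\circ\overline{\beta}$ yields $\mathcal{M}_3 = \{\mu, \overline{\beta}\}$, which is the required decomposition.

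The main obstacle is notational rather than conceptual: one must be careful that the action of $\mathcal{G}^{\circ}\rtimes H$ on $\mathcal{A}$ induced via $\mu$ is compatible enough with the original $\mathcal{G}$-action on $\mathcal{A}$ to make $\tilde{\mu}$ a genuine group homomorphism. Once this bookkeeping is done, the $H$-split, $H$-rigid properties of $\overline{\beta}$ and the formal decomposition follow essentially by inspection.
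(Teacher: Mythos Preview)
Your proposal is correct and follows essentially the same route as the paper: apply Lemma~\ref{lem H-fratt} to obtain the Frattini map $\mu$, form $(\mathcal{A}\rtimes\mathcal{G}^{\circ})\rtimes H$ with the $H$-rigid, $H$-split projection $\overline{\beta}$, and use the map $(a,g,h)\mapsto(a,g\cdot h)$ to produce the commuting square $\beta\circ\tilde{\mu}=\mu\circ\overline{\beta}$ that witnesses the decomposition. Your explicit invocation of the two elementary decomposition moves from Definition~\ref{def decom} is a welcome clarification of what the paper leaves implicit in its commutative diagram.
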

\begin{proof}
Let $H$ be a finite subgroup of $\mathcal{G}$ with the properties of Lemma \ref{lem H-fratt}. Therefore $\mu$$: \mathcal{G}^{\circ} \rtimes H \rightarrow \mathcal{G}$, $(g,h) \mapsto g \cdot h$ is a Frattini-epimorphism. The epimorphism $\beta|_{\mathcal{A} \rtimes \mathcal{G}^{\circ}}$$:~\mathcal{A} \rtimes \mathcal{G}^{\circ} \rightarrow \mathcal{G}^{\circ}$ is an $H$-epimorphism, since $H$ acts via conjugation on $\mathcal{A}$ and $\mathcal{G}^{\circ}$. Therefore, we consider the corresponding $H$-rigid, $H$-split epimorphism 
\begin{align*}
	\overline{\beta}: (\mathcal{A} \rtimes \mathcal{G}^{\circ}) \rtimes H \rightarrow \mathcal{G}^{\circ} \rtimes \mathcal{G}. 
\end{align*} 
By defining the epimorphism 
\begin{align*}
	(\text{id},\mu): (\mathcal{A} \rtimes \mathcal{G}^{\circ}) \rtimes H & \rightarrow \mathcal{A} \rtimes H, \\
	 ((a,g),h) & \mapsto (a,g \cdot h), 
\end{align*} 
we obtain the following commutative diagram
\begin{align*}
	\begin{xy}
  	\xymatrix{
        \mathcal{A} \rtimes \mathcal{G} \ar[r]^{\beta} &  \mathcal{G} \\
     		(\mathcal{A} \rtimes \mathcal{G}^{\circ}) \rtimes H \ar[u]^{(\text{id},\mu)} \ar[r]^/.3cm/{\overline{\beta}}  & \mathcal{G}^{\circ} \rtimes H \ar[u]_{\mu}. 
  	}
	\end{xy} 
\end{align*} 
Which proves the claim. 
\end{proof}

\begin{lemma} \label{lem H-semi frat decom}
Let $\beta$$: \widetilde{\mathcal{G}}  \rightarrow \mathcal{G}$ be a Frattini-epimorphism in $\emph{\textbf{AffGr}}_K^{\emph{red}}$ with reduced kernel $\mathcal{A}$. Then there exists a finite subgroup $H$ of $\widetilde{\mathcal{G}}$, such that $\beta$ is decomposable into a Frattini-epimorphism $\mu$ with finite kernel and an $H$-rigid Frattini-epimorphism $\overline{\beta}$ with kernel $\mathcal{A}$. 
\end{lemma}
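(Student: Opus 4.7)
The plan is to mirror the argument of Lemma \ref{lem H-semi split decom}, replacing the split structure by the finite supplement furnished by Theorem \ref{thm BS} applied to $\widetilde{\mathcal{G}}$ itself, and exploiting the Frattini hypothesis on $\beta$ to force Frattini-ness of the two factors produced by the construction.

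First, I would invoke Lemma \ref{lem H-fratt} on $\widetilde{\mathcal{G}}$ to obtain a finite $H\le\widetilde{\mathcal{G}}$ with $\widetilde{\mathcal{G}}=\widetilde{\mathcal{G}}^{\circ}\cdot H$, chosen minimally, so that the multiplication
\[
\mu\colon\widetilde{\mathcal{G}}^{\circ}\rtimes H\twoheadrightarrow\widetilde{\mathcal{G}},\qquad(\tilde g,h)\mapsto\tilde g\cdot h,
\]
is Frattini with finite kernel. Since $\beta$ is surjective and $\beta(\widetilde{\mathcal{G}}^{\circ})=\mathcal{G}^{\circ}$, the image $\beta(H)$ supplements $\mathcal{G}^{\circ}$ in $\mathcal{G}$; endow $\mathcal{G}^{\circ}$ with the $H$-action induced by $\beta$ and conjugation, form $\mathcal{G}^{\circ}\rtimes H$ together with
\[
\mu_{\mathcal{G}}\colon\mathcal{G}^{\circ}\rtimes H\twoheadrightarrow\mathcal{G},\qquad(g,h)\mapsto g\cdot\beta(h),
\]
and set $\overline{\beta}\colon\widetilde{\mathcal{G}}^{\circ}\rtimes H\to\mathcal{G}^{\circ}\rtimes H$, $(\tilde g,h)\mapsto(\beta(\tilde g),h)$. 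This $\overline{\beta}$ is $H$-rigid by construction, and fits into the commutative square
\[
\begin{xy}
\xymatrix{
\widetilde{\mathcal{G}}^{\circ}\rtimes H \ar[r]^{\overline{\beta}} \ar[d]_{\mu} & \mathcal{G}^{\circ}\rtimes H \ar[d]^{\mu_{\mathcal{G}}} \\
\widetilde{\mathcal{G}} \ar[r]^{\beta} & \mathcal{G}
}
\end{xy}
\]
so by the two elementary moves of Definition \ref{def decom} the epimorphism $\beta$ is decomposable into $\mu_{\mathcal{G}}$ and $\overline{\beta}$.

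Two Frattini verifications remain. For $\mu_{\mathcal{G}}$, Lemma \ref{lem frat class} reduces the task to showing $\kappa(\ker\mu_{\mathcal{G}})\subseteq\Phi(H)$, where $\kappa$ is the projection to $H$. I would argue by contradiction: a proper $S\lneq H$ with $\kappa(\ker\mu_{\mathcal{G}})\cdot S=H$ would force $\widetilde{\mathcal{G}}^{\circ}\cdot\mathcal{A}\cdot S=\widetilde{\mathcal{G}}$, because every $h\in\kappa(\ker\mu_{\mathcal{G}})$ satisfies $\beta(h)\in\mathcal{G}^{\circ}=\beta(\widetilde{\mathcal{G}}^{\circ})$, whence $h\in\widetilde{\mathcal{G}}^{\circ}\cdot\mathcal{A}$. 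The reduced closed subgroup $\widetilde{\mathcal{G}}^{\circ}\cdot S$ is then a proper supplement of $\mathcal{A}$ in $\widetilde{\mathcal{G}}$ (properness by minimality of $H$), contradicting the Frattini hypothesis on $\beta$. For $\overline{\beta}$, any reduced closed $\mathcal{U}\le\widetilde{\mathcal{G}}^{\circ}\rtimes H$ with $\ker(\overline{\beta})\cdot\mathcal{U}=\widetilde{\mathcal{G}}^{\circ}\rtimes H$ descends under $\mu$ to a set with $\mathcal{A}\cdot\mu(\mathcal{U})=\widetilde{\mathcal{G}}$; Frattini of $\beta$ forces $\mu(\mathcal{U})=\widetilde{\mathcal{G}}$, and a second application — this time of Frattini of $\mu$ — forces $\mathcal{U}=\widetilde{\mathcal{G}}^{\circ}\rtimes H$.

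The delicate point is the two-tiered use of the Frattini property: the outer one on $\beta$ is imposed, while the inner one on $\mu$ arises from the minimality clause in Lemma \ref{lem H-fratt}, and both are needed simultaneously to control the interaction between $\mathcal{A}$ and the finite quotient $\widetilde{\mathcal{G}}/\widetilde{\mathcal{G}}^{\circ}$. In particular, identifying $\ker(\overline{\beta})$ with the kernel $\mathcal{A}$ of the original epimorphism requires tracking which piece of $\mathcal{A}$ is absorbed by $\mu_{\mathcal{G}}$ and which piece survives as the kernel of $\overline{\beta}$; this bookkeeping, together with the two Frattini contradictions just outlined, is the main technical hurdle of the proof.
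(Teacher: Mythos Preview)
Your construction is identical to the paper's: apply Lemma~\ref{lem H-fratt} to $\widetilde{\mathcal{G}}$, form $\overline{\beta}\colon\widetilde{\mathcal{G}}^{\circ}\rtimes H\to\mathcal{G}^{\circ}\rtimes H$ and $\mu_{\mathcal{G}}\colon\mathcal{G}^{\circ}\rtimes H\to\mathcal{G}$ (the paper's $\mu$), and record the commutative square $\beta\circ\widetilde{\mu}=\mu_{\mathcal{G}}\circ\overline{\beta}$ to invoke Definition~\ref{def decom}. The paper stops there with ``which proves the claim,'' whereas you go on to verify explicitly that $\mu_{\mathcal{G}}$ and $\overline{\beta}$ are Frattini; those two supplementary arguments are correct and fill in what the paper leaves tacit.
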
 
\begin{proof}
Let $H$ be a finite subgroup of $\widetilde{\mathcal{G}}$ with the properties of Lemma \ref{lem H-fratt}. Therefore $\widetilde{\mu}$$: \widetilde{\mathcal{G}}^{\circ} \rtimes H \rightarrow \widetilde{\mathcal{G}}$, $(g,h) \mapsto g \cdot h$ is a Frattini-epimorphism. Since $\beta(\widetilde{\mathcal{G}}^{\circ}) = \mathcal{G}^{\circ}$, the group $\mathcal{G}^{\circ}$ is an $H$-group via $h \cdot \beta(g) := \beta(hgh^{-1})$. So the restriction $\beta|_{\widetilde{\mathcal{G}}^{\circ}}$ is an $H$-epimorphism and induces the $H$-rigid epimorphism $\overline{\beta}$$: \widetilde{\mathcal{G}}^{\circ} \rtimes H  \rightarrow \mathcal{G}^{\circ} \rtimes H$. By defining the epimorphism 
\begin{align*}
	\mu: \mathcal{G}^{\circ} \rtimes H \rightarrow \mathcal{G}, (g,h) \mapsto g \cdot \beta(h), 
\end{align*} 
we obtain the following commutative diagram 
\begin{align*}
	\begin{xy}
  	\xymatrix{
        \mathcal{A}  \ar@{^{(}->}[r] &  \widetilde{\mathcal{G}} \ar[r]^{\beta} &  \mathcal{G} \\ 
     		\mathcal{A}  \ar@{^{(}->}[r] &  \widetilde{\mathcal{G}}^{\circ} \rtimes H \ar[u]^{\widetilde{\mu}} \ar[r]^{\overline{\beta}}  & \mathcal{G}^{\circ} \rtimes H \ar[u]_{\mu}. 
  	}
	\end{xy} 
\end{align*} 
Which proves the claim. 
\end{proof} 

Now we have all parts together to prove a helpful decomposition of epimorphisms. 

\begin{prop} \label{prop decompostion} 
Let $\beta$$: \widetilde{\mathcal{G}} \rightarrow \mathcal{G}$ be an epimorphism in $\emph{\textbf{AffGr}}_K^{\emph{red}}$ with reduced kernel $\mathcal{A}$. Then $\beta$ is decomposable into 
\begin{enumerate}
	\item epimorphisms with finite kernel, 
	\item Frattini-epimorphisms, which are $H$-rigid with respect to a finite group $H$, 
	\item epimorphisms with reduced, connected, semi-simple, centerless kernel, which are $H$-rigid, $H$-split with respect to a finite group $H$, 
	\item epimorphisms with reduced, torus kernel, which are $H$-rigid, $H$-split with respect to a finite group $H$, 
	\item epimorphisms with minimal, reduced, connected, unipotent kernel, which are $H$-rigid, $H$-split with respect to a finite group $H$. 
\end{enumerate}
\end{prop}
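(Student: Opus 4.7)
The plan is to reduce to the atomic decomposition lemmas \ref{lem H-semi frat decom} and \ref{lem H-semi split decom} by a two-level procedure: first use Lemma \ref{lem decom2} to separate a Frattini-epimorphism from a split epimorphism, and then use Lemma \ref{lem decom1} to peel off characteristic factors of the kernel one at a time. Concretely, applying Lemma \ref{lem decom2} to $\beta$ yields a Frattini-epimorphism $\beta_F$ with reduced kernel and a split epimorphism $\beta_S: \mathcal{A} \rtimes \mathcal{G} \to \mathcal{G}$ with kernel $\mathcal{A}$. Lemma \ref{lem H-semi frat decom} immediately decomposes $\beta_F$ into a Frattini-epimorphism with finite kernel (type (1)) and an $H$-rigid Frattini-epimorphism (type (2)), so the remaining task is to decompose $\beta_S$.

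For $\beta_S$ I would build a chain of characteristic, closed, reduced subgroups of $\mathcal{A}$,
\begin{align*}
\{e\} = \mathcal{A}_n \subsetneq \mathcal{A}_{n-1} \subsetneq \cdots \subsetneq \mathcal{A}_0 = \mathcal{A},
\end{align*}
whose successive quotients $\mathcal{A}_i/\mathcal{A}_{i+1}$ realise exactly the atomic types appearing in the statement: the finite quotient $\mathcal{A}/\mathcal{A}^{\circ}$ and the reduced parts of the centres of the semisimple factors contribute the finite quotients; the quotient $R(\mathcal{A}^{\circ})/R_u(\mathcal{A}^{\circ})$ of the solvable radical by the unipotent radical is a torus; the quotient of $\mathcal{A}^{\circ}/R(\mathcal{A}^{\circ})$ by its reduced centre is semisimple and centerless; and the unipotent radical $R_u(\mathcal{A}^{\circ})$ admits a characteristic filtration whose successive quotients are minimal reduced connected unipotent groups, using that $K[\mathcal{A}]$ is Noetherian. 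Being characteristic in $\mathcal{A}$, each $\mathcal{A}_i$ is normal in $\mathcal{A} \rtimes \mathcal{G}$.

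I would then iterate Lemma \ref{lem decom1} along this filtration. At each stage the current split epimorphism has kernel some $\mathcal{A}_i$; choosing $\mathcal{V} = \mathcal{A}_{i+1}$ produces one factor with the atomic kernel $\mathcal{A}_i/\mathcal{A}_{i+1}$, which remains split by the last clause of Lemma \ref{lem decom1}, and a residual epimorphism with kernel $\mathcal{A}_{i+1}$ which need not be split. To restore splitness I reapply Lemma \ref{lem decom2} to this residual, picking up a further Frattini-piece (again dispatched via Lemma \ref{lem H-semi frat decom}) and a split epimorphism with kernel $\mathcal{A}_{i+1}$, on which the induction continues; since $n$ is finite the process terminates. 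For each resulting atomic split factor with finite kernel we are already in type (1); for the others, whose kernels are connected reduced tori, semisimple centerless groups, or minimal connected unipotent groups, I invoke Lemma \ref{lem H-semi split decom} to decompose each into a Frattini-epimorphism with finite kernel (type (1)) and an $H$-rigid, $H$-split epimorphism of type (4), (3), or (5) respectively.

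The step I expect to require the most care is the construction of the filtration itself: all subgroups must be reduced in order to remain in $\text{\textbf{AffGr}}_K^{\text{red}}$, which in positive characteristic forces one to replace scheme-theoretic centres by their reduced parts whenever the former would be infinitesimal, and one must check that the corresponding quotient is semisimple centerless in the reduced sense. Once the filtration is fixed, everything else reduces to formal applications of Lemmata \ref{lem decom1}, \ref{lem decom2}, \ref{lem H-semi frat decom} and \ref{lem H-semi split decom}.
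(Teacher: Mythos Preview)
Your approach uses the same ingredients as the paper (Lemmata \ref{lem decom1}, \ref{lem decom2}, \ref{lem H-semi split decom}, \ref{lem H-semi frat decom}) and is essentially correct, but you reverse the order of the two main operations. The paper first runs the structural filtration of the kernel (steps 1--6: pass to $\mathcal{A}^\circ$, then mod out the radical, the centre, the commutator subgroups, the unipotent part, and refine the unipotent piece) and only afterwards applies the Frattini/split dichotomy (step 7) and the $H$-rigidification (steps 8--9) once to each atomic factor. Because the characteristic subgroups used at each step are characteristic in the \emph{current} kernel, they are automatically normal in the current ambient group, so Lemma \ref{lem decom1} applies directly; and since steps 7--9 preserve the kernel, no information is lost at the end. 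Your route, by contrast, pays for doing Lemma \ref{lem decom2} first: the residual map $\beta_1$ at each stage is no longer split, so you must reapply Lemma \ref{lem decom2} and pick up an extra Frattini layer at every level of the filtration. This works but is noisier.

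There is one point where your formulation is imprecise. You claim that $R_u(\mathcal{A}^\circ)$ admits a \emph{characteristic} filtration with \emph{minimal} connected unipotent quotients. But ``minimal'' in item (5) means minimal with respect to the ambient $\mathcal{G}^\circ\rtimes H$-action (this is exactly what is used in the proof of Theorem \ref{thm solution unipot kernel}), not characteristically simple. A subgroup maximal among proper closed connected characteristic subgroups need not be maximal among proper closed connected subgroups invariant under the specific conjugation action of the current $\mathcal{U}_i$, so your quotients may fail the minimality required. The paper avoids this by doing step 6 with subgroups normal in the current $\widetilde{\mathcal{G}}$ (not characteristic), which is harmless there because no later change of ambient group occurs before $H$-rigidification; one then checks that minimality survives Lemma \ref{lem H-semi split decom} since the $\mathcal{U}^\circ\rtimes H$-invariant subgroups of the kernel coincide with the $\mathcal{U}$-invariant ones. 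In your order you would have to refine the unipotent piece \emph{after} the $H$-rigidification step rather than in the global upfront filtration.
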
 
\begin{proof} 
We prove the proposition in nine steps: \\ 
\emph{1. The epimorphism $\beta$ is decomposable into an epimorphism with reduced, connected kernel and an epimorphism with finite kernel:} \\
Since $\mathcal{A}^{\circ}$ is a reduced, characteristic subgroup of $\mathcal{A}$ it is normal in $\widetilde{\mathcal{G}}$. With Lemma \ref{lem decom1} the claim follows. \\ 
\emph{2. Let $\beta$ be an epimorphism with reduced, connected kernel. Then $\beta$ is decomposable into an epimorphism with reduced, connected, semi-simple kernel and an epimorphism with reduced, connected, solvable kernel:} \\
The radical $R(\mathcal{A})$ is a reduced, connected, characteristic subgroup of $\mathcal{A}$ and hence normal in $\widetilde{\mathcal{G}}$. By Lemma \ref{lem decom1} the claim follows. \\
\emph{3. Let $\beta$ be an epimorphism with reduced, connected, semi-simple kernel $\mathcal{A}$. Then $\beta$ is decomposable into an epimorphism with reduced, abelian kernel and an epimorphism with reduced, connected, semi-simple, centerless kernel:} \\
The center $\emph{Z}(\mathcal{A})$ is a reduced, characteristic subgroup of $\mathcal{A}$ and hence normal in $\widetilde{\mathcal{G}}$. Again by Lemma \ref{lem decom1} the claim follows. \\ 
\emph{4. Let $\beta$ be an epimorphism with reduced, connected, solvable kernel. Then $\beta$ is decomposable into epimorphisms with reduced, connected, abelian kernel:} \\
If $\mathcal{A}$ is abelian, there is nothing to do. Let $A$ not be abelian, then the dimension of the commutator group $[\mathcal{A},\mathcal{A}]$ is strictly smaller than the dimension of $\mathcal{A}$. Since $[\mathcal{A},\mathcal{A}]$ is a reduced, characteristic subgroup of $\mathcal{A}$ it is normal in $\widetilde{\mathcal{G}}$. With Lemma \ref{lem decom1}, $\beta$ is decomposable into a reduced epimorphism with reduced, connected, abelian kernel and an epimorphism with reduced, connected, solvable kernel $[\mathcal{A},\mathcal{A}]$. Iteration of this process yields the result. \\
\emph{5. Let $\beta$ be an epimorphism with reduced, connected, abelian kernel. Then $\beta$ is decomposable into an epimorphism with reduced, connected, unipotent kernel and an epimorphism with reduced torus kernel:} \\ 
The set $\mathcal{A}_u$ of the unipotent elements of $\mathcal{A}$ is a reduced, connected, characteristic subgroup of $\mathcal{A}$ and hence normal in $\widetilde{\mathcal{G}}$. Since $\mathcal{A}/\mathcal{A}_u$ is a reduced torus the claim follows by Lemma \ref{lem decom1}. \\ 
\emph{6. Let $\beta$ be an epimorphism with reduced, connected, unipotent kernel. Then $\beta$ is decomposable into epimorphisms with minimal, reduced, connected, unipotent kernel:} \\ 
Use induction on the dimension of the kernel together with Lemma \ref{lem decom1}. \\
\emph{7. Let $\beta$ be an epimorphism with reduced, connected kernel. Then $\beta$ is decomposable into a Frattini-epimorphism and a split epimorphism with the same kernel like $\beta$:} \\ 
See Lemma \ref{lem decom2}. \\
\emph{8. Let $\beta$ be a split epimorphism with reduced, connected kernel. Then $\beta$ is decomposable into an epimorphism with finite kernel and an $H$-rigid, $H$-split epimorphism with the same kernel like $\beta$:} \\ 
See Lemma \ref{lem H-semi split decom}. \\ 
\emph{9. Let $\beta$ be a Frattini-epimorphism with reduced kernel. Then $\beta$ is decomposable into an epimorphism with finite kernel and an $H$-rigid, Frattini-epimorphism with the same kernel like $\beta$:} \\ 
See Lemma \ref{lem H-semi frat decom}. \\
\end{proof}

\section{H-Effective Embedding Problems}

Before we start to solve the ID-embedding problems, we take a look at the Galois theory of the semidirect product $\mathcal{G}^{\circ} \rtimes H$, where $H$ is a finite group.

\begin{defn}
Let $F$ be an ID-field and $E/F$ be an IPV-extension, which is defined by matrices $D_l \in \text{GL}_{n}(F_{l})$ and Galois group $\mathcal{G}(K) \cong \text{Gal}_{\text{ID}}(E/F)$ (the matrices $D_l$ were defined in section 2). Then we call $E/F$ \textbf{effective}, if $D_l \in \mathcal{G}(F_{l})$ for all $l \in \mathbb{N}$. 
\end{defn}

\begin{remark}
If $E/F$ is effective, then one can show that the Galois group $\text{Gal}_{\text{ID}}(E/F)$ is necessarily connected. 
\end{remark}

Now we show that for an important class of fields the converse holds.

\begin{remark} 
A field $F$ has cohomological dimension $\le 1$ ($\text{cd}(F) \le 1$) if $F$ is the only central division algebra over $F$. For more information about the cohomological dimension see \cite{Ser97}. Important examples of fields of cohomological dimension $\le 1$ are $K(t)$, $K((t))$ and algebraic extensions, where $K$ is an algebraically closed field (see \cite{Ser97}, II.3.3). 
\end{remark}

\begin{thm} \label{thm cd}
Let $F$ be an ID-field with $\emph{cd}(F) \le$ \emph{1} and with algebraically closed field of constants $K$. Let $\mathcal{H} \le \emph{GL}_{n,K}$ be a reduced, connected linear algebraic group. Let $R/F$ be an IPV-ring and assume that the Galois group $\mathcal{G}(K) \cong \emph{Gal}_{\emph{ID}}(R/F)$ is connected. Suppose that the defining matrices $D_l$ satisfy $D_l \in \mathcal{H}(F_l)$. Then there exist matrices $C_l \in \mathcal{H}(F_l)$ such that $\widetilde{D}_l := C_{l}D_{l}C_{l+1}^{-1} \in \mathcal{G}(F_l)$.
\end{thm}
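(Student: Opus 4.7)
This is a reduction-of-structure-group statement from $\mathcal{H}$ down to $\mathcal{G}$, and the key tool is Steinberg's theorem: for a connected linear algebraic group $\mathcal{G}$ over a field $F$ with $\text{cd}(F) \le 1$, one has $H^1(F, \mathcal{G}) = 1$, so every $\mathcal{G}_F$-torsor over $F$ is trivial. The plan is to package the gauge matrices $(C_l)_l$ as $F$-points of $\mathcal{G}$-torsors and then apply Steinberg.

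\emph{Torsor of solutions in $\mathcal{H}$.} Fix a fundamental solution matrix $Y \in \text{GL}_n(R)$ of the IDE. The embedding of $\text{Spec}(R)$ into $\text{GL}_{n,F}$ via $Y$ realizes $\text{Spec}(R)$ as the right $\mathcal{G}_F$-torsor $Y \mathcal{G}_F$. Consider the closed $F$-subscheme
\begin{align*}
V_0 := \mathcal{H}_F \cap Y \mathcal{G}_F \subseteq \text{GL}_{n,F},
\end{align*}
which parametrizes fundamental solutions of the IDE taking values in $\mathcal{H}$. Using that $D_l \in \mathcal{H}(F_l)$ for every $l$, one checks that $V_0$ is nonempty over $\overline{F}$ and is preserved by the right $\mathcal{G}_F$-action on $Y \mathcal{G}_F$; hence $V_0$ is a right $\mathcal{G}_F$-torsor over $F$.

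\emph{Apply Steinberg and propagate through the filtration.} Since $\mathcal{G}$ is connected and $\text{cd}(F) \le 1$, Steinberg's theorem gives $V_0(F) \neq \emptyset$, producing $C_0 \in \mathcal{H}(F) = \mathcal{H}(F_0)$. The analogous argument applied to the sub-IPV-ring $R_l/F_l$ (which by \cite{Mat01} still has connected Galois group $\mathcal{G}$, and for which $F_l$ is again an algebraic function field over the algebraically closed field $K$, so $\text{cd}(F_l) \le 1$) produces $C_l \in \mathcal{H}(F_l)$ for every $l \ge 1$. Crucially, these can be chosen coherently by restricting a single trivialization of the $\mathcal{G}_F$-torsor $\text{Spec}(R)$ down to each $R_l$. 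A direct computation, using the transformation law for the matrices of $\psi_l$ under basis change, then gives $\widetilde{D}_l = C_l D_l C_{l+1}^{-1} \in \mathcal{G}(F_l)$: since $C_0 Y$ is a fundamental solution with values in $\mathcal{G}$, the associated defining matrices all lie in $\mathcal{G}$.

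\textbf{Main obstacle.} The hard part is setting up the torsor $V_0$ correctly---in particular verifying that it is nonempty and that its structure group is really $\mathcal{G}_F$ rather than some proper subgroup. This uses the joint information $D_l \in \mathcal{H}(F_l)$ at \emph{all} levels $l$, and essentially encodes the compatibility between the ambient group $\mathcal{H}$ and the fundamental solution matrix $Y$. The compatibility of the $(C_l)$ across levels is a comparatively mild point, handled by descending a single trivialization rather than choosing independent ones at each level.
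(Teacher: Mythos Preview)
The paper does not prove this theorem; it simply cites \cite{Mat01}, Theorem~5.9. Your proposal is therefore a sketch of the argument behind that citation, and the central idea---realise $\mathrm{Spec}(R)$ as a $\mathcal{G}_F$-torsor inside $\mathcal{H}_F$ and invoke Steinberg's theorem to trivialise it---is indeed the standard one and is essentially correct.

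Two points in your execution need repair. First, the ``coherence'' step is both unjustified and unnecessary. An $F$-algebra homomorphism $\phi_0\colon R\to F$ furnished by Steinberg has no reason to commute with the iterative derivation, so there is no reason that $\phi_0(R_l)\subseteq F_l$; your claim that one can ``restrict a single trivialization'' down the tower does not go through. Fortunately it is not needed: choose the $C_l$ \emph{independently} at each level. Steinberg applies over every $F_l$ because $F/F_l$ is purely inseparable (one checks $F^{p^l}\subseteq F_l$), hence $\mathrm{Gal}(F_l^{\mathrm{sep}}/F_l)\cong\mathrm{Gal}(F^{\mathrm{sep}}/F)$ and $\mathrm{cd}(F_l)=\mathrm{cd}(F)\le 1$; note that you instead invoke the stronger, unstated hypothesis that $F_l$ is an algebraic function field. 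Once each $C_l\in\mathcal{H}(F_l)$ is chosen so that $C_lY_l\in\mathcal{G}(R_l)$, the one-line identity
\[
\widetilde{D}_l \;=\; C_l D_l C_{l+1}^{-1} \;=\; (C_lY_l)\,(C_{l+1}Y_{l+1})^{-1}
\]
shows $\widetilde{D}_l\in\mathcal{G}(R)\cap\mathrm{GL}_n(F_l)=\mathcal{G}(F_l)$, with no compatibility between the $C_l$ required.

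Second, your final justification ``since $C_0Y$ is a fundamental solution with values in $\mathcal{G}$, the associated defining matrices all lie in $\mathcal{G}$'' is circular as written: the defining matrix at level $l$ is built from $C_l$ and $C_{l+1}$, not from $C_0$ alone, so this sentence presupposes exactly the coherence you have not established. Replace it by the displayed computation above. (Your ``main obstacle'' paragraph is right that one must first arrange $Y\in\mathcal{H}(R)$ and $\mathcal{G}\le\mathcal{H}$; this follows from the fact that the ideal of $\mathcal{H}_F$ in $F[\mathrm{GL}_n]$ is an ID-ideal when all $D_l\in\mathcal{H}(F_l)$, together with uniqueness of the IPV-ring.)
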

\begin{proof}
\cite{Mat01}, Theorem 5.9.
\end{proof}

\begin{remark} \label{rem gal iso}
Let $L/F$ be a finite IPV-extension. Let $H$ be a finite group and let $\overline{\alpha}$$: \text{Gal}(L/F) \rightarrow H$ be an isomorphism of groups. Since any element in $\text{Gal}(L/F)$ is an ID-automorphism, the map $\text{Gal}(L/F) \stackrel{\text{res}}{\rightarrow} \text{Gal}(L_l/F_l)$ is an isomorphism of groups. Then there exists exactly one isomorphism $\overline{\alpha}_l$$: \text{Gal}(L_l/F_l) \rightarrow H$, such that the following diagram commutes: 
\begin{align*}
	\begin{xy}
  	\xymatrix{
        \text{Gal}(L/F) \ar[r]^/0.3cm/{\overline{\alpha}} \ar[d]_{\text{res}}^{\cong}  & H \\
     		\text{Gal}(L_l/F_l) \ar[ru]_{\overline{\alpha}_l} 
  	}
	\end{xy} 
\end{align*} 
\end{remark}

\begin{prop} \label{prop semidirect}
Let $F$ be an ID-field with field of constants $K$ and $\emph{cd}(F) \le \emph{1}$. Let $\mathcal{G} := \mathcal{G}^{\circ} \rtimes H \le \emph{GL}_{n,K}$, with regular homomorphic section $\sigma$$: H \rightarrow \mathcal{G}(K)$. Further let $E/F$ be an IPV-extension, with $\alpha$$: \emph{Gal}_{\emph{ID}}(E/F) \stackrel{\cong}{\rightarrow} \mathcal{G}(K)$ and fundamental solution matrices $\widetilde{Y}_l \in \emph{GL}_{n}(E_l)$. 
\begin{enumerate}
	\item Let $L := E^{\mathcal{G}^{\circ}(K)}$. Then $L/F$ is a finite IPV-extension with Galois group isomorphic to $H$ via $\overline{\alpha}$ and for all $l \in \mathbb{N}$ there exist elements $Z_l \in \emph{GL}_n(L_l)$ satisfying $\eta(Z_l) = Z_lC_{\eta}$ for all $\eta \in H \cong \emph{Gal}_{\emph{ID}}(L/F)$, where $C_{\eta} = (\sigma \circ \overline{\alpha})(\eta)$. 
	\item $E/L$ is an effective IPV-extension with Galois group isomorphic to $\mathcal{G}^{\circ}(K)$ and fundamental solution matices $Y_l := Z_l^{-1}\widetilde{Y}_l$, such that $\epsilon(Y_l) = Y_lC_{\epsilon}$ for all $\epsilon \in \emph{Gal}_{\emph{ID}}(E/L)$ and $\widetilde{\eta}(Y_l) = C^{-1}_{\eta}Y_lC_{\eta}$ for all $\eta \in \emph{Gal}_{\emph{ID}}(L/F)$ (where $C_{\epsilon} = \alpha(\epsilon)$ and $\widetilde{\eta} := (\alpha^{-1} \circ \sigma \circ \overline{\alpha})(\eta)$). 
\end{enumerate}
\end{prop}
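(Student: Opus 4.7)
The plan is to treat the two parts separately using the Galois correspondence, Hilbert 90, and the cohomological dimension hypothesis via Theorem \ref{thm cd}. For part (1), the subgroup $\mathcal{G}^{\circ}$ is a reduced normal closed subgroup of $\mathcal{G}$, so the Galois correspondence identifies $L = E^{\mathcal{G}^{\circ}(K)}$ as an IPV-extension of $F$ with Galois group $(\mathcal{G}/\mathcal{G}^{\circ})(K) \cong H(K)$; combining this with Theorem \ref{finite gal extension} shows that $L/F$ is a finite Galois extension in the ordinary sense, and the section $\sigma$ induces the isomorphism $\overline{\alpha}$. Remark \ref{rem gal iso} supplies the identification $\text{Gal}(L_l/F_l) \cong H$ at each level $l$. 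The assignment $\eta \mapsto C_{\eta} = (\sigma \circ \overline{\alpha})(\eta)$ takes values in $\text{GL}_n(K) \subseteq \text{GL}_n(L_l)$, and since the Galois action on $K$ is trivial, the cocycle condition reduces to the homomorphism property, which is automatic. Classical Hilbert 90 ($H^1(\text{Gal}(L_l/F_l), \text{GL}_n(L_l)) = 0$) then produces $Z_l \in \text{GL}_n(L_l)$ with $Z_l^{-1}\eta(Z_l) = C_{\eta}$, that is, $\eta(Z_l) = Z_l C_{\eta}$.

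For part (2), the Galois correspondence yields $\text{Gal}_{\text{ID}}(E/L) \cong \mathcal{G}^{\circ}(K)$. The two equivariance statements for $Y_l := Z_l^{-1}\widetilde{Y}_l$ are then direct computations: since $Z_l \in L_l \subseteq L$ is fixed by every $\epsilon \in \text{Gal}_{\text{ID}}(E/L)$, one gets $\epsilon(Y_l) = Z_l^{-1}\epsilon(\widetilde{Y}_l) = Z_l^{-1}\widetilde{Y}_l\alpha(\epsilon) = Y_l C_{\epsilon}$; and $\widetilde{\eta} := (\alpha^{-1}\circ \sigma \circ \overline{\alpha})(\eta)$ projects onto $\eta$ along the surjection $\text{Gal}_{\text{ID}}(E/F) \twoheadrightarrow \text{Gal}_{\text{ID}}(L/F) \cong H$, so $\widetilde{\eta}|_L = \eta$, giving $\widetilde{\eta}(Z_l^{-1}) = (Z_l C_{\eta})^{-1} = C_{\eta}^{-1}Z_l^{-1}$ and hence $\widetilde{\eta}(Y_l) = C_{\eta}^{-1}Z_l^{-1}\widetilde{Y}_l C_{\eta} = C_{\eta}^{-1}Y_l C_{\eta}$.

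The subtlest point is effectivity. Since $Z_l \in L_l$ all derivatives $\partial^{(p^j)}(Z_l)$ for $j < l$ vanish, so the change of basis $Z_l$ transports the original defining matrices $D_l \in \text{GL}_n(F_l)$ to $\bar{D}_l := Z_l^{-1}D_l Z_{l+1} \in \text{GL}_n(L_l)$, and these inherit the $H$-equivariance $\eta(\bar{D}_l) = C_{\eta}^{-1}\bar{D}_l C_{\eta}$ from part (1). To upgrade ``lives in $\text{GL}_n(L_l)$'' to ``lives in $\mathcal{G}^{\circ}(L_l)$'', I would invoke Theorem \ref{thm cd} applied to $E/L$ — legitimate because $\text{cd}(L) \le 1$ (as $L/F$ is a finite algebraic extension and $\text{cd}(F) \le 1$) and the Galois group $\mathcal{G}^{\circ}$ is connected — to produce \emph{some} change of basis making the defining matrices effective, and then reconcile that choice with the particular $Z_l$ furnished by Hilbert 90 through a $\mathcal{G}^{\circ}(L_l)$-torsor argument that exploits the known $H$-equivariance. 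Balancing these two requirements simultaneously — $H$-equivariance from Hilbert 90 and effectivity from Theorem \ref{thm cd} — is where I expect the real technical work of the proof to lie.
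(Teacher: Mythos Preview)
Your proposal is correct and tracks the paper's proof closely: Hilbert~90 at each level $l$ for part~(1), and direct computation of the two transformation formulas for $Y_l := Z_l^{-1}\widetilde{Y}_l$ in part~(2), with effectivity of $E/L$ coming from Theorem~\ref{thm cd} applied over $L$ (using that $\text{cd}(L)\le 1$ since $L/F$ is finite). One small omission: the paper also observes $L = F(Z_0)$, which follows immediately from injectivity of $\sigma\circ\overline\alpha$ and the relation $\eta(Z_0)=Z_0C_\eta$.

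Where you diverge is in the last paragraph. You read the statement as requiring that the \emph{same} matrices $Y_l = Z_l^{-1}\widetilde Y_l$ simultaneously witness effectivity and satisfy the $H$-equivariance, and you anticipate a torsor argument to reconcile the $Z_l$ from Hilbert~90 with the base-change $C_l$ from Theorem~\ref{thm cd}. The paper does not do this: it treats ``$E/L$ is effective'' as an abstract property of the extension (existence of \emph{some} presentation with defining matrices in $\mathcal{G}^\circ(L_l)$), established in one line by Theorem~\ref{thm cd}, and then separately records the transformation formulas for the specific $Y_l$. So the ``real technical work'' you expect is not present in the paper's proof of this proposition; whether the two requirements can be met by a single choice is deferred (and is in effect what Theorem~\ref{thm cd H-eff} addresses). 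Your instinct that there is something to check here is sound, but for this proposition the paper is content with the weaker, decoupled reading.
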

\begin{proof}
 \emph{(1)} Since $\widetilde{Y}_l \in \text{GL}_{n}(E_l)$ are fundamental solution matrices, we have that $\gamma(Y_l) =Y_lC_{\gamma}$ for all $\gamma \in \text{Gal}_{\text{ID}}(E/F)$. By Hilbert's Theorem 90 (\cite{Ser97}, III.1, Lemma 1), there exists an element $Z_0 \in \text{GL}_n(L)$ with $\eta(Z_0) = Z_0C_{\eta}$ for all $\eta \in \text{Gal}_{\text{ID}}(L/F)$. For any $\eta \in \text{Gal}_{\text{ID}}(L/F(Z_0))$ the equation $(\sigma \circ \overline{\alpha})(\eta) = C_{\eta} = Z_0^{-1}\eta(Z_0) = 1$
holds and so $\eta = \text{id}_L$, hence $L = F(Z_0)$. Again by Hilbert's Theorem 90, there exists an element $Z_1 \in \text{GL}_n(L_1)$ such that $\eta(Z_1) = Z_1C_{\eta}$ for all $\eta \in \text{Gal}_{\text{ID}}(L_1/F_1) \cong \text{Gal}_{\text{ID}}(L/F)$. Iteration of this process yields the result. \\
 \emph{(2)} By defining $Y_l := Z^{-1}_l\widetilde{Y}_l$, we get that $E = L(Y_0)$ and $E/L$ is effective, because $\text{cd}(L) \le \text{1}$ (see Theorem \ref{thm cd}). Let $\widetilde{D}_l$ be matrices which define the IPV-extension $E/F$, i.e. $\widetilde{Y}_{l+1} = \widetilde{D}^{-1}_l\widetilde{Y}_l$. Therefore $Y_{l+1} = Z^{-1}_{l+1}\widetilde{Y}_{l+1} = Z^{-1}_{l+1}\widetilde{D}^{-1}_l\widetilde{Y}_l = Z^{-1}_{l+1}\widetilde{D}^{-1}_lZ_lY_l$ holds and $D_l := Z^{-1}_{l+1}\widetilde{D}^{-1}_lZ_l$ are matrices associated to the fundamental solution matrices $Y_l$. Further we have $\epsilon(Y_l) = Z^{-1}_l\epsilon(\widetilde{Y}_l) = Z^{-1}\widetilde{Y}_lC_{\epsilon} = Y_lC_{\epsilon}$ for all $\epsilon \in \text{Gal}_{\text{ID}}(E/L)$ and $\widetilde{\eta}(Y_l) = \eta(Z^{-1}_l)\widetilde{\eta}(\widetilde{Y}_l) = C^{-1}_{\eta}Z^{-1}_l\widetilde{Y}_lC_{\eta} = C^{-1}_{\eta}Y_lC_{\eta}$ for all $\eta \in \text{Gal}_{\text{ID}}(L/F)$. 
\end{proof}

\begin{defn}
Let $H$ be a reduced linear algebraic group. Let $L/F$ be an IPV-extension and let $\overline{\alpha}$$: \text{Gal}_{\text{ID}}(L/F) \rightarrow H(K)$ be an isomorphism of groups. Let $\mathcal{G} := \mathcal{G}^{\circ} \rtimes H \le \text{GL}_{n,K}$, with regular homomorphic section $\sigma$$: H(K) \rightarrow \mathcal{G}(K)$. We call matrices $D_l \in \mathcal{G}^{\circ}(L_l)$ \textbf{$H$-equivariant} (via $\alpha$) if for all $\eta \in H$ 
\begin{align*}
	\eta(D_l) = \chi(\eta)^{-1}D_l\chi(\eta)
\end{align*}
holds for all $l \in \mathbb{N}$. Here the action of $\eta$ on the left-hand side is the (coefficient-wise) Galois action, while on the right-hand side $\eta$ via the monomorphism
\begin{align*}
	\chi := \sigma \circ \overline{\alpha}: \text{Gal}_{\text{ID}}(L/F) \rightarrow H(K) \rightarrow (\mathcal{G}^{\circ} \rtimes H)(K), 
\end{align*} 
constructed as an element of $(\mathcal{G} \rtimes H)(K)$, which acts via conjugation on $\mathcal{G}(F_l)$. 
\end{defn}

\begin{thm} \label{thm semidirect}
Let $H$ be a finite group and let $\mathcal{G} := \mathcal{G}^{\circ} \rtimes H \le \emph{GL}_{n,K}$, with regular homomorphic section $\sigma$$: H \rightarrow \mathcal{G}(K)$. Further, let $F$ be an ID-field with field of constants $K$ and $\emph{cd}(F) \le \emph{1}$. 
\begin{enumerate}
	\item Let $L/F$ be a finite IPV-extension with Galois group isomorphic to $H$ via $\overline{\alpha}$. Let 
	\begin{align*}
		\chi := \sigma \circ \overline{\alpha}: \emph{Gal}_{\emph{ID}}(L/F) \rightarrow \sigma(H) \le \emph{GL}_{n}(K), \ \ \ \eta \mapsto C_{\eta}
	\end{align*}
	be the composite isomorphism. Then for all $l \in \mathbb{N}$ there exist elements $Z_l \in \emph{GL}_n(L_l)$ satisfying $\eta(Z_l) = Z_lC_{\eta}$ for all $\eta \in H \cong \emph{Gal}_{\emph{ID}}(L/F)$ and $C_l \in \emph{GL}_n(F_l)$ such that $Z_{l+1} = C^{-1}_lZ_l$. Moreover, $L = F(Z)$ with $Z := Z_0$. In other words, $Z$ is a fundamental solution matrix for the extension $L/F$ on which the Galois group $\emph{Gal}_{\emph{ID}}(L/F)$ acts via $\overline{\alpha}$. 
	\item Let $E/L$ be an IPV-extension with Galois group isomorphic to $\mathcal{G}^{\circ}(K)$ via an isomorphism 
	\begin{align*}
		\alpha_L : \emph{Gal}_{\emph{ID}}(E/L) \rightarrow \mathcal{G}^{\circ}(K) \unlhd \mathcal{G}(K), \ \ \ \epsilon \mapsto C_{\epsilon}. 
	\end{align*} 
	Then there exist elements $Y_l \in \mathcal{G}(E_l)$ satisfying $\epsilon(Y_l) = Y_lC_{\epsilon}$ for all $\epsilon \in \emph{Gal}_{\emph{ID}}(E/L)$ and $D_l \in \mathcal{G}^{\circ}(L_l)$ such that $Y_{l+1} = D^{-1}_lY_l$. Moreover $E = L(Y)$, with $Y := Y_0$. In other words, $Y$ is a fundamental solution matrix for the extension $E/L$ on which the Galois group $\emph{Gal}_{\emph{ID}}(E/L)$ acts via $\alpha_L$. 
	\item Suppose in addition that $D_l$ are $H$-equivariant via $\overline{\alpha}$: 
	\begin{align*}
	\eta(D_l) = C^{-1}_{\eta}D_lC_{\eta} \ \ \ \text{for all} \ \ l \in \mathbb{N}, \eta \in H.
	\end{align*}
	Then $E/F$ is an IPV-extension with Galois group isomorphic to $\mathcal{G}(K)$ and $\widetilde{Y} := ZY$ is a fundamental solution matrix of this extension which satisfies $\widetilde{Y}_{l+1} = \widetilde{D}^{-1}_l\widetilde{Y}_l \in \emph{GL}_n(E_l)$ for all $l \in \mathbb{N}$, where $\widetilde{D}_l := Z_lD_lZ_{l+1}^{-1} \in \emph{GL}_n(F_l)$. Further, the isomorphism $\alpha_L$ of part 2. can be extended to an isomorphism 
	\begin{align*}
		\alpha : \emph{Gal}_{\emph{ID}}(E/F) \rightarrow \mathcal{G}(K) \ \ \ \text{with} \ \ \emph{res} \circ \overline{\alpha} = \beta \circ \alpha.
	\end{align*} 
\end{enumerate}
\end{thm}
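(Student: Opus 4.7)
The plan is to treat parts (1) and (2) as straightforward refinements of Proposition \ref{prop semidirect}, and then to devote the main effort to part (3). For part (1), Proposition \ref{prop semidirect}(1) already supplies matrices $Z_l \in \mathrm{GL}_n(L_l)$ with $\eta(Z_l) = Z_l C_\eta$; I would produce the transition matrix by setting $C_l := Z_l Z_{l+1}^{-1}$ and checking $H$-invariance via
\[
\eta(C_l) = (Z_l C_\eta)(Z_{l+1} C_\eta)^{-1} = Z_l Z_{l+1}^{-1} = C_l,
\]
so that $C_l \in \mathrm{GL}_n(F_l)$. The identity $L = F(Z)$ then follows because the $H$-stabilizer of $Z = Z_0$ is trivial (otherwise $\sigma \circ \overline\alpha$ would have nontrivial kernel), forcing $[F(Z):F] = |H| = [L:F]$. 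For part (2) I would invoke Proposition \ref{prop semidirect}(2) to obtain the $Y_l$ with the prescribed Galois action, and then apply Theorem \ref{thm cd} over the base $L$ --- noting $\mathrm{cd}(L) \le \mathrm{cd}(F) \le 1$ since $L/F$ is finite separable --- to relocate the defining matrices $D_l$ inside $\mathcal{G}^\circ(L_l)$.

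For part (3) the heart of the argument is to set $\widetilde Y_l := Z_l Y_l$ and verify that it is a fundamental solution matrix of an IDE over $F$. Using $Z_l = C_l Z_{l+1}$ and $Y_l = D_l Y_{l+1}$, one rewrites $\widetilde Y_l = Z_l D_l Z_{l+1}^{-1} \cdot \widetilde Y_{l+1}$, so I would set $\widetilde D_l := Z_l D_l Z_{l+1}^{-1}$. The crucial point is that $\widetilde D_l$ actually lives in $\mathrm{GL}_n(F_l)$: for $\eta \in \mathrm{Gal}(L_l/F_l) \cong H$, the $H$-equivariance hypothesis $\eta(D_l) = C_\eta^{-1} D_l C_\eta$ precisely cancels the twist coming from $Z_l$, yielding
\[
\eta(\widetilde D_l) = (Z_l C_\eta)(C_\eta^{-1} D_l C_\eta)(Z_{l+1} C_\eta)^{-1} = Z_l D_l Z_{l+1}^{-1} = \widetilde D_l.
\]
Hence $(\widetilde D_l)_l$ is a projective system over $F$ defining an IDE $(\ast)$ on $F$, and $\widetilde Y := ZY$ is a solution matrix for it lying in $E$.

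The final step is to promote $\widetilde Y$ to an IPV-extension of $F$ equal to $E$ and to identify its Galois group. Let $\widetilde R := F[\widetilde Y, \det(\widetilde Y)^{-1}] \subseteq E$. Since $Z \in L$, one has $L \cdot \widetilde R = L[Y, \det(Y)^{-1}]$, which is the simple IPV-ring of $E/L$ by hypothesis. I would transfer simplicity back to $\widetilde R$ by a descent argument along the finite separable extension $L/F$, deduce that $\widetilde R$ is the IPV-ring of $(\ast)$, and force $\mathrm{Quot}(\widetilde R) = E$ by a Galois-correspondence size count. To construct $\alpha$ I would then read off the Galois action on $\widetilde Y$: for $\epsilon \in \mathrm{Gal}(E/L)$ one has $\epsilon(\widetilde Y) = Z\,\epsilon(Y) = \widetilde Y\,\alpha_L(\epsilon)$, so $\alpha$ extends $\alpha_L$ on $\mathcal{G}^\circ(K)$, while any lift $\widetilde\eta \in \mathrm{Gal}(E/F)$ of $\eta \in H$ necessarily satisfies $\widetilde\eta(\widetilde Y) = \widetilde Y\,C_\eta$ modulo $\mathcal{G}^\circ(K)$, producing the required semidirect splitting and the relation $\overline\alpha \circ \mathrm{res} = \beta \circ \alpha$. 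The principal obstacle I anticipate is precisely the combined claim ``$\widetilde R$ is simple and $\mathrm{Quot}(\widetilde R) = E$'': the individual ingredients (simplicity of the IPV-ring of $E/L$ and faithful flatness of the finite separable $L/F$) are standard, but the bookkeeping across the tower $F \subseteq L \subseteq E$ must be done carefully to ensure that one really obtains the full semidirect product $\mathcal{G}(K)$ as Galois group rather than only a subgroup of it.
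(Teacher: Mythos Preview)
The paper does not give its own proof of this theorem; it simply cites \cite{Mat01}, Theorem~8.2. So there is no in-paper argument to compare against directly. That said, Remark~\ref{rem decomp automorphism} records the key step of the cited proof, and this lets one see where your plan agrees and where it diverges.

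Your treatment of parts (1) and (2) via Proposition~\ref{prop semidirect} and Theorem~\ref{thm cd}, and your verification that $\widetilde D_l := Z_l D_l Z_{l+1}^{-1}$ is $H$-invariant (hence lies in $\mathrm{GL}_n(F_l)$), are correct and are exactly what one expects. The divergence is in how you conclude that $E/F$ is an IPV-extension with full Galois group $\mathcal{G}(K)$. The argument in \cite{Mat01}, as reflected in Remark~\ref{rem decomp automorphism}, does \emph{not} pass through simplicity of $\widetilde R = F[\widetilde Y,\det(\widetilde Y)^{-1}]$ by descent. Instead it uses the $H$-equivariance directly to \emph{construct} an extension $\widetilde\eta \in \mathrm{Aut}_{\mathrm{ID}}(E/F)$ of each $\eta \in \mathrm{Gal}_{\mathrm{ID}}(L/F)$ by setting $\widetilde\eta(Y_l) := C_\eta^{-1} Y_l C_\eta$; the equivariance $\eta(D_l) = C_\eta^{-1} D_l C_\eta$ is precisely the condition making this assignment compatible with the defining relations $Y_{l+1} = D_l^{-1} Y_l$. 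One then checks that these lifts together with $\mathrm{Gal}_{\mathrm{ID}}(E/L) \cong \mathcal{G}^\circ(K)$ already realise $\mathcal{G}^\circ(K) \rtimes H$ inside $\mathrm{Aut}_{\mathrm{ID}}(E/F)$, and that $\widetilde Y = ZY$ is a fundamental solution matrix on which this group acts as $\mathcal{G}(K)$, giving the IPV structure and the isomorphism $\alpha$ in one stroke.

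Your descent route is not wrong, but the obstacle you flag is real: the bare statement ``transfer simplicity of $L[Y,\det^{-1}]$ back to $\widetilde R$ by faithful flatness of $L/F$'' is incomplete, because you have not established that $L$ and $\widetilde R$ are linearly disjoint over $F$ (equivalently, that $L \subseteq \mathrm{Quot}(\widetilde R)$ fails a priori --- you only know $ZY \in \widetilde R$, not $Z$). One can close this by invoking the standard fact that any $F$-algebra generated by a fundamental solution matrix inside a field with no new constants is already an IPV-ring, and then running a dimension/Galois-correspondence count to force $\mathrm{Quot}(\widetilde R) = E$; but this is noticeably longer than the automorphism-extension argument above, which bypasses the issue entirely. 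I would recommend replacing the descent paragraph with that direct construction of the $\widetilde\eta$.
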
 
\begin{proof}
\cite{Mat01}, Theorem 8.2.
\end{proof}

\begin{remark} \label{rem decomp automorphism}
With assumptions as in Theorem \ref{thm semidirect}, we get the following ID-embedding problem 
\begin{align*}
	\begin{xy}
  	\xymatrix{
        &    & \text{Gal}_{\text{ID}}(E/F) \ar[d]^{\alpha} \ar[r]^{\text{res}} & \text{Gal}_{\text{ID}}(L/F) \ar[d]_{\cong}^{\overline{\alpha}} \ar[ld]_{\chi} & \\
     		1 \ar[r] &  \mathcal{G}^{\circ}(K) \ar[r] & \mathcal{G}^{\circ}(K) \rtimes H \ar[r]^{\beta}  & H \ar[r] & 1. 
  	}
	\end{xy} 
\end{align*} 
One of the key steps in the proof of Theorem \ref{thm semidirect} is that each $\gamma \in \text{Gal}_{\text{ID}}(E/F)$ can be decomposed into $\gamma = \epsilon \circ \eta$, where $\epsilon := \gamma|_{\text{Gal}_{\text{ID}}(E/L)}$ and $\eta := \widetilde{\alpha}^{-1} \circ \chi \circ \text{res}_L(\gamma)$. In particular, every element of $\text{Gal}_{\text{ID}}(L/F)$ can be extended (via the $H$-equivariance) to an element of $\text{Gal}_{\text{ID}}(E/F)$ (see proof of \cite{Mat01}, Theorem 8.2). 
\end{remark}

\begin{remark} \label{rem from} 
With notations as in Theorem \ref{thm semidirect}, $H$ acts on $\mathcal{G}^{\circ}(L)$ via 
\begin{align*}
	\eta \ast g_l := \chi(\eta)\eta(g_l)\chi(\eta)^{-1}, \ \ \ \ \ g_l \in \mathcal{G}^{\circ}(L_l), \eta \in \text{Gal}_{\text{ID}}(L/F). 
\end{align*}
Therefore the $H$-equivariance condition may be reformulated as an invariance condition: 
\begin{align*}
	g_l = \eta \ast g_l \ \ \ \ \ \ \ \text{for all } \eta \in \text{Gal}_{\text{ID}}(L/F) \ \ \ (g_l \in \mathcal{G}^{\circ}(L_l)).
\end{align*} 
The homomorphism $\chi$ defines an element $\chi$ in $H^1(\text{Gal}_{\text{ID}}(L/F),\mathcal{G}^{\circ}(L) \rtimes H)$. Then there is a canonical map $\mathcal{G}^{\circ}(L_l) \rtimes H \rightarrow \text{Aut}(\mathcal{G}^{\circ}(L_l) \rtimes H), g_l \mapsto (h_l \mapsto g_lh_lg_l^{-1})$. The induced map on cohomology maps $\chi$ to an element \\ $\text{Int}(\overline{\alpha}) \in H^1(\text{Gal}_{\text{ID}}(L/F),\text{Aut}_{L}(\mathcal{G} \rtimes H))$. Any automorphism of $\mathcal{G}^{\circ}(L) \rtimes H$ stabilizes the connected component, i.e., we obtain an element in $H^1(\text{Gal}_{\text{ID}}(L/F),\text{Aut}_{L}(\mathcal{G}^{\circ}))$, which is again denoted by $\chi$. We may also define a twisted action as above on the coordinate ring $L[\mathcal{G}]$ by 
\begin{align*}
	(\eta \ast q)(g_l) = \eta(q)(\chi(\eta)^{-1}g_l\chi(\eta)),  \ \ \ \ \ q \in L[\mathcal{G}], g_l \in \mathcal{G}(L_l), 
\end{align*} 
where $\eta(q)$ denotes the Galois action on the coefficients of $q$. Note that this $\ast$-action is semilinear and thus defines an $L/F$-form $\mathcal{G}^{\circ}_{\chi}$ of $\mathcal{G}^{\circ}$, on which the $\ast$-action is the Galois action (see also \cite{Spr98}, 12.3.7). \\ 
By Remark \ref{rem gal iso} we can analogous define an $L_l/F_l$-form $\mathcal{G}^{\circ}_{\chi_l}$ of $\mathcal{G}^{\circ}$ (where $\chi_l = \sigma \circ \overline{\alpha}_l$). But since $\text{Gal}(L_l/F_l)$ and $\text{Gal}(L/F)$ are natural isomorphic, in the following we write the form $\mathcal{G}^{\circ}_{\chi}$ in place of the family of forms $(\mathcal{G}^{\circ}_{\chi_l})_{l \in \mathbb{N}}$. 
\end{remark}

Now we can extend the concept of effectivity to the semidirect product $\mathcal{G}^{\circ} \rtimes H$.

\begin{defn} \label{def H-eff}
Let $H$ be a finite group and let $\mathcal{G} := \mathcal{G}^{\circ} \rtimes H \le \text{GL}_{n,K}$. Let $E/F$ be an IPV-extension and let $L$ be the algebraic closure of $F$ in $E$. The monomorphism $\alpha$$: \text{Gal}_{\text{ID}}(E/F) \rightarrow \mathcal{G}(K)$ is called \textbf{$H$-effective}, if the following conditions hold:
\begin{enumerate}
	\item There exist matrices $Z_l \in \text{GL}_n(L_l)$, such that $\iota_{Z_0}$$: \text{Gal}_{\text{ID}}(L/F) \rightarrow H, \eta \mapsto C_{\eta} := Z_0\eta({Z_0})^{-1}$ is an isomorphism. 
	\item There exist matrices $D_l \in \mathcal{G}(L_l)$, which are $H$-equivariant via $\iota_{Z_0}$, with fundamental solution matrices $Y_l \in \mathcal{G}(E_l)$. 
	\item For $\widetilde{Y}_l := Z_lY_l \in \text{GL}_n(E_l)$, $\alpha = \iota_{\widetilde{Y}_0}$ holds. 
\end{enumerate} 
\end{defn}

For fields of cohomological dimension $\le 1$ we get an analogous result as for connected groups.

\begin{thm} \label{thm cd H-eff}
Let $F$ be an ID-field with $\emph{cd}(F) \le \emph{1}$ and with algebraically closed field of constants $K$. Let $E/F$ be an IPV-extension. Let $H$ be a finite group and let $\mathcal{G} := \mathcal{G}^{\circ} \rtimes H \le \emph{GL}_{n,K}$. Then every isomorphism $\alpha$$: \emph{Gal}_{\emph{ID}}(E/F) \rightarrow \mathcal{G}(K)$ is $H$-effective. 
\end{thm}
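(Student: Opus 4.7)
My plan is to realize the three conditions of Definition \ref{def H-eff} by directly invoking Proposition \ref{prop semidirect}, which was designed for precisely this setting. I set $L := E^{\mathcal{G}^\circ(K)}$; since $\mathcal{G}^\circ$ is a reduced, closed, normal subgroup of $\mathcal{G}$ of finite index $|H|$, the Galois correspondence identifies $L/F$ as a finite IPV-extension with $\text{Gal}_{\text{ID}}(L/F) \cong H$ via the induced isomorphism $\overline{\alpha}$, and $L$ is the algebraic closure of $F$ in $E$, matching the field $L$ of Definition \ref{def H-eff}.

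Next I apply Proposition \ref{prop semidirect}, whose hypotheses are exactly $\text{cd}(F) \le 1$ together with the existence of $\alpha$. Part (1) uses Hilbert 90 to produce matrices $Z_l \in \text{GL}_n(L_l)$ with $\eta(Z_l) = Z_l C_\eta$ for all $\eta \in H$; after the standard inversion adjustment (the definition of $\iota_{Z_0}$ uses $Z_0\eta(Z_0)^{-1}$, whereas the proposition produces $Z_0^{-1}\eta(Z_0)$), this yields condition (1). Part (2), which invokes Theorem \ref{thm cd} on the connected extension $E/L$, then supplies matrices $D_l \in \mathcal{G}^\circ(L_l) \subseteq \mathcal{G}(L_l)$ and fundamental solution matrices $Y_l \in \mathcal{G}^\circ(E_l) \subseteq \mathcal{G}(E_l)$ with $\epsilon(Y_l) = Y_l C_\epsilon$ and $\widetilde{\eta}(Y_l) = C_\eta^{-1} Y_l C_\eta$.

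For condition (2) I still need the $H$-equivariance $\eta(D_l) = C_\eta^{-1} D_l C_\eta$. The key observation is that the matrices $\widetilde{D}_l$ defining the IPV-extension $E/F$ with respect to $\widetilde{Y}_l := Z_l Y_l$ lie in $\text{GL}_n(F_l)$ and are therefore Galois-invariant, while $D_l$ is obtained from $\widetilde{D}_l$ by multiplication with $Z_l, Z_{l+1}$ and their inverses (as in the proof of Proposition \ref{prop semidirect}(2)). Applying $\eta \in \text{Gal}_{\text{ID}}(L/F)$ and using $\eta(Z_l) = Z_l C_\eta$, each $Z$-factor contributes a $C_\eta$ on the appropriate side while the central $\widetilde{D}_l$-factor is left untouched, yielding exactly $\eta(D_l) = C_\eta^{-1} D_l C_\eta$.

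Condition (3) then asserts that $\alpha = \iota_{\widetilde{Y}_0}$ for $\widetilde{Y}_l = Z_l Y_l$. This is essentially built into the construction: by Proposition \ref{prop semidirect}(2), $\widetilde{Y}_l$ is the original fundamental solution matrix for $E/F$, and by the decomposition $\gamma = \epsilon \circ \eta$ in $\text{Gal}_{\text{ID}}(E/F)$ described in Remark \ref{rem decomp automorphism} the Galois action on $\widetilde{Y}_0$ is computed as $\gamma(\widetilde{Y}_0) = \eta(Z_0)\, \epsilon(Y_0) = Z_0 C_\eta Y_0 C_\epsilon = \widetilde{Y}_0 \cdot C_\eta C_\epsilon$, which is precisely $\widetilde{Y}_0 \alpha(\gamma)$ under the identification of $\mathcal{G}(K) = \mathcal{G}^\circ(K) \rtimes H$. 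The only genuine obstacle is careful bookkeeping of conventions (left versus right actions, and the inversion needed to reconcile Proposition \ref{prop semidirect}'s normalization with that of Definition \ref{def H-eff}); the mathematical content is already contained in the cited results.
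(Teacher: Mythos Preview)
Your approach is essentially the same as the paper's: both invoke Proposition~\ref{prop semidirect} to produce the $Z_l$ and $Y_l$, then verify the three conditions of Definition~\ref{def H-eff}. For the $H$-equivariance of $D_l$ the paper takes the slightly shorter route $\eta(D_l)=\widetilde{\eta}(Y_lY_{l+1}^{-1})=C_\eta^{-1}Y_lC_\eta\cdot C_\eta^{-1}Y_{l+1}^{-1}C_\eta=C_\eta^{-1}D_lC_\eta$, using Proposition~\ref{prop semidirect}(2) directly rather than passing through $\widetilde{D}_l\in\text{GL}_n(F_l)$; your argument via the Galois-invariance of $\widetilde{D}_l$ is equally valid.

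One genuine bookkeeping slip: your computation for condition (3) is not quite right. With $\gamma=\epsilon\circ\widetilde{\eta}$ one has
\[
\gamma(\widetilde{Y}_0)=\epsilon\bigl(\eta(Z_0)\,\widetilde{\eta}(Y_0)\bigr)=\epsilon\bigl(Z_0C_\eta\cdot C_\eta^{-1}Y_0C_\eta\bigr)=Z_0\,\epsilon(Y_0)\,C_\eta=\widetilde{Y}_0\,C_\epsilon C_\eta,
\]
not $\widetilde{Y}_0\,C_\eta C_\epsilon$; your intermediate expression $\eta(Z_0)\,\epsilon(Y_0)$ drops the action of $\widetilde{\eta}$ on $Y_0$, and the step $Z_0C_\eta Y_0C_\epsilon=\widetilde{Y}_0C_\eta C_\epsilon$ would require $C_\eta$ to commute with $Y_0$. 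This is exactly the computation carried out in the proof of Theorem~\ref{thm semidirect 2}(1). The paper itself glosses over condition (3) by observing that one may arrange $\iota_{\widetilde{Y}_0}=\alpha$ via the embedding $\mathcal{G}(K)\hookrightarrow\text{GL}_n(K)$.
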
 
\begin{proof}
The first part of Definition \ref{def H-eff} follows by Proposition \ref{prop semidirect} (1). \\ 
By Proposition \ref{prop semidirect} (2) we have $\widetilde{\eta}(Y_l) = C^{-1}_{\eta}Y_lC_{\eta}$ for all $\eta \in \text{Gal}_{\text{ID}}(L/F)$ and $Y_{l+1} = D^{-1}_lY_l$. Hence the desired property $\eta(D_l) = \eta(Y_l)\eta(Y^{-1}_{l+1}) = C^{-1}_{\eta}D_lC_{\eta}$ holds. 
Finally, by the inclusion 
\begin{align*}
	(\mathcal{G} \rtimes H)(K) \stackrel{\alpha^{-1}}{\rightarrow} \text{Gal}_{\text{ID}}(E/F) \stackrel{\iota_{\widetilde{Y}_0}}{\rightarrow} \text{GL}_n(K),
\end{align*}
we can assume that $\iota_{\widetilde{Y}_0} = \alpha$ holds. 
\end{proof} 

\begin{defn}
With notation as in Definition \ref{def ID-EBP}, we call an ID-embedding problem $\mathcal{E}(\alpha, \beta)$ \textbf{$H$-effective}, if $\alpha$ is $H$-effective. Further we say the solution is \textbf{$H$-effective}, if $\widetilde{\alpha}$ is $H$-effective. 
\end{defn}

\section{Frattini-Embedding Problems} 

\begin{prop} \label{prop frat ep}
Let $F$ be an ID-field with field of constants $K$. Then every solution of a Frattini ID-embedding problem in $\emph{\textbf{AffGr}}_K^{\emph{red}}$ is proper. 
\end{prop}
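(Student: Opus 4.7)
The plan is to unwind the definitions and apply the Frattini condition directly to the image of the candidate solution.

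Let $\widetilde{\alpha}\colon \text{Gal}_{\text{ID}}(\widetilde{E}/F) \to \widetilde{\mathcal{G}}(K)$ be a solution of $\mathcal{E}(\alpha,\beta)$. First I would set $\mathcal{U}(K) := \widetilde{\alpha}(\text{Gal}_{\text{ID}}(\widetilde{E}/F))$, which by Definition \ref{def ID-EBP} is a closed subgroup of $\widetilde{\mathcal{G}}(K)$, and which comes with the structure of a reduced linear algebraic group (being the image of a reduced Galois group under a monomorphism in $\text{\textbf{AffGr}}_K^{\text{red}}$). Thus $\mathcal{U}$ is a reduced closed subgroup of $\widetilde{\mathcal{G}}$.

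Next I would exploit the commutativity of the diagram
\begin{align*}
    \beta\circ\widetilde{\alpha} = \alpha\circ\text{res}.
\end{align*}
Since $\alpha$ is an isomorphism and the restriction map $\text{Gal}_{\text{ID}}(\widetilde{E}/F)\to\text{Gal}_{\text{ID}}(E/F)$ is surjective, the composition $\beta\circ\widetilde{\alpha}$ is surjective onto $\mathcal{G}(K)$. Consequently $\beta(\mathcal{U}) = \mathcal{G}$, and the standard argument (any preimage of $\beta(g)\in\beta(\mathcal{U})$ differs from $g$ by an element of $\ker(\beta)$) yields
\begin{align*}
    \mathcal{U}\cdot\ker(\beta) = \widetilde{\mathcal{G}}.
\end{align*}

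Now the Frattini hypothesis on $\beta$ applies: since $\mathcal{U}$ is a reduced closed supplement of $\ker(\beta)=\mathcal{A}$ in $\widetilde{\mathcal{G}}$, Definition \ref{def epi} forces $\mathcal{U} = \widetilde{\mathcal{G}}$. Thus $\widetilde{\alpha}$ is surjective; being already a monomorphism by assumption, it is an isomorphism, i.e.\ a proper solution.

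The only potential subtlety is the identification between the scheme-level Frattini condition and the $K$-rational level, but this is harmless: we work in $\text{\textbf{AffGr}}_K^{\text{red}}$ with $K$ algebraically closed, so a reduced closed subgroup is determined by its $K$-points, and the equation $\mathcal{U}\cdot\mathcal{A} = \widetilde{\mathcal{G}}$ on $K$-points transfers to the scheme level. No calculation beyond the diagram chase is required.
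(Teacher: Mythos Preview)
Your argument is correct and is exactly the standard proof one expects; the paper itself does not spell it out but simply cites \cite{Mat01}, Proposition 5.15, which contains precisely this reasoning. The one point you might state more explicitly is why the restriction map $\text{Gal}_{\text{ID}}(\widetilde{E}/F)\to\text{Gal}_{\text{ID}}(E/F)$ is surjective (it follows from the Galois correspondence, since $E/F$ is an IPV-subextension of $\widetilde{E}/F$), but this is routine and your handling of the reducedness issue at the end is appropriate.
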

\begin{proof}
\cite{Mat01}, Proposition 5.15.
\end{proof}

\begin{thm} \label{thm solution frat}
Let $F$ be an ID-field with $\emph{cd}(F) \le 1$. Then every $H$-rigid Frattini ID-embedding problem in $\emph{\textbf{AffGr}}_K^{\emph{red}}$ over $F$ has an $H$-effective, proper solution. 
\end{thm}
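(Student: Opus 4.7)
\emph{Plan.} The strategy is to use the $H$-effectiveness of $\alpha$ (automatic from Theorem \ref{thm cd H-eff}) to extract $H$-equivariant defining matrices for $E/F$, lift these through the connected component $\beta^{\circ}$ of $\beta$ in an $H$-equivariant fashion, build the resulting IPV-extension via Theorem \ref{thm semidirect}, and finally appeal to the Frattini hypothesis to force the solution to be proper.

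More concretely, let $L$ denote the algebraic closure of $F$ in $E$. By Theorem \ref{thm cd H-eff} the isomorphism $\alpha$ is $H$-effective, so we obtain matrices $Z_l \in \text{GL}_n(L_l)$ making $\iota_{Z_0} \colon \text{Gal}_{\text{ID}}(L/F) \xrightarrow{\sim} H$ an isomorphism, defining matrices $D_l \in \mathcal{G}^{\circ}(L_l)$ that are $H$-equivariant via $\iota_{Z_0}$, and fundamental solution matrices $Y_l \in \mathcal{G}^{\circ}(E_l)$ for $E/L$ with $Y_{l+1} = D_l^{-1} Y_l$. The technical core of the proof is to lift each $D_l$ to some $\widetilde{D}_l \in \widetilde{\mathcal{G}}^{\circ}(L_l)$ with $\beta^{\circ}(\widetilde{D}_l) = D_l$, in such a way that $\widetilde{D}_l$ is again $H$-equivariant via $\iota_{Z_0}$ (with respect to the canonical section $H \hookrightarrow \widetilde{\mathcal{G}}^{\circ} \rtimes H$ supplied by the $H$-rigidity of $\beta$). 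By Remark \ref{rem from}, $H$-equivariance translates into descent to an $F_l$-rational point of the twisted form $\mathcal{G}^{\circ}_{\chi}$ (respectively $\widetilde{\mathcal{G}}^{\circ}_{\widetilde{\chi}}$), so the task reduces to showing that $\widetilde{\mathcal{G}}^{\circ}_{\widetilde{\chi}}(F_l) \twoheadrightarrow \mathcal{G}^{\circ}_{\chi}(F_l)$ is surjective. Since $L/F$ is algebraic, one also has $\text{cd}(F_l) \le 1$, which kills the Galois cohomological obstruction to such a lift.

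Once the $\widetilde{D}_l$ have been constructed, Theorem \ref{thm semidirect} delivers an IPV-extension $\widetilde{E}/F$ containing $E$, together with a monomorphism $\widetilde{\alpha} \colon \text{Gal}_{\text{ID}}(\widetilde{E}/F) \hookrightarrow (\widetilde{\mathcal{G}}^{\circ} \rtimes H)(K)$ satisfying $\beta \circ \widetilde{\alpha} = \alpha \circ \text{res}$, and this $\widetilde{\alpha}$ is $H$-effective essentially by construction. Setting $\mathcal{U} := \widetilde{\alpha}(\text{Gal}_{\text{ID}}(\widetilde{E}/F))$, the commutativity of the embedding diagram gives $\beta(\mathcal{U}) = \mathcal{G}$, hence $\mathcal{U} \cdot \text{ker}(\beta) = \widetilde{\mathcal{G}}$; since $\mathcal{U}$ is a reduced closed subgroup and $\beta$ is a Frattini-epimorphism, this forces $\mathcal{U} = \widetilde{\mathcal{G}}$, so $\widetilde{\alpha}$ is proper (or one may simply invoke Proposition \ref{prop frat ep}).

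The main obstacle is the $H$-equivariant lifting step: any pointwise lift of $D_l \in \mathcal{G}^{\circ}(L_l)$ to $\widetilde{\mathcal{G}}^{\circ}(L_l)$ is standard, but the lifts must be compatible with the twisted $H$-action given by $\iota_{Z_0}$. The key device is to reformulate the problem on the twisted $F_l$-forms of Remark \ref{rem from} and invoke $\text{cd}(F_l) \le 1$ to trivialize the relevant cohomology. Extra care is needed because $\text{ker}(\beta)$ may fail to be reduced; however, what actually enters the cohomological computation are the reduced connected groups $\widetilde{\mathcal{G}}^{\circ}$ and $\mathcal{G}^{\circ}$, and any residual non-reducedness is absorbed by the Frattini conclusion that already makes the solution proper.
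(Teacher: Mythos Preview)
Your outline is correct and follows essentially the same route as the paper: pass to the twisted $L/F$-forms $\mathcal{G}^{\circ}_{\chi}$ and $\widetilde{\mathcal{G}}^{\circ}_{\widetilde{\chi}}$ of Remark~\ref{rem from}, lift the $H$-equivariant matrices $D_l \in \mathcal{G}^{\circ}_{\chi}(F_l)$ along the induced epimorphism $\widehat{\beta}\colon \widetilde{\mathcal{G}}^{\circ}_{\widetilde{\chi}} \to \mathcal{G}^{\circ}_{\chi}$, assemble via Theorem~\ref{thm semidirect}, and conclude properness from the Frattini hypothesis.

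Two small points where the paper's execution differs from yours. First, the paper applies the Frattini argument (Proposition~\ref{prop frat ep}) at the \emph{connected} level: it first builds $\widetilde{E}/L$ from the $\widetilde{D}_l$, obtains a monomorphism $\widetilde{\alpha}^{\circ}\colon \text{Gal}_{\text{ID}}(\widetilde{E}/L) \hookrightarrow \widetilde{\mathcal{G}}^{\circ}(K)$ solving $\mathcal{E}(\alpha^{\circ},\beta^{\circ})$, uses Frattini to make $\widetilde{\alpha}^{\circ}$ an isomorphism, and only then invokes Theorem~\ref{thm semidirect}. This ordering matters because Theorem~\ref{thm semidirect}(2)--(3) as stated takes as input an \emph{isomorphism} $\text{Gal}_{\text{ID}}(\widetilde{E}/L) \cong \widetilde{\mathcal{G}}^{\circ}(K)$, not merely a monomorphism; your plan to assemble first and apply Frattini only globally does not literally fit the hypotheses of the theorem you cite. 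Second, the inclusion $E \subseteq \widetilde{E}$ that you assert is not delivered by Theorem~\ref{thm semidirect} alone; the paper obtains it separately from \cite{Mat01}, Theorem~5.12, using that $\beta^{\circ}(\widetilde{D}_l) = D_l$. Both are easy fixes, and your identification of the lifting step as the technical core (and its resolution via the twisted forms and $\text{cd}\le 1$) matches the paper exactly.
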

\begin{proof} 
By Theorem \ref{thm cd H-eff}, we can assume that the ID-embedding problem is $H$-effective. Let 
\begin{align*}
	\begin{xy}
  	\xymatrix{
        &   &   & \text{Gal}_{\text{ID}}(E/F) \ar[d]_{\cong}^{\alpha} \ar[r]^{\text{res}} & \text{Gal}_{\text{ID}}(L/F) \ar[ld]_{\chi} \\
     		1 \ar[r] &  \mathcal{A}(K) \ar[r] & \widetilde{\mathcal{G}}^{\circ}(K) \rtimes H \ar[r]^{\beta}  & \mathcal{G}^{\circ}(K) \rtimes H \ar[r] & 1.
  	}
	\end{xy} 
\end{align*} 
be an $H$-rigid, $H$-effective Frattini ID-embedding problem, with regular homomorphic section $\sigma$$ : H \rightarrow \mathcal{G}^{\circ}(K) \rtimes H$. For the semidirect product $\mathcal{G}^{\circ}(K) \rtimes H$ we use the notation as in Theorem \ref{thm semidirect}. Since $E/F$ is $H$-effective, the IPV-extension $E/L$ is defined by matrices $D_l \in \mathcal{G}^{\circ}(L_l)$, where $L := E^{\mathcal{G}^{\circ}(K)}$. Let $\chi := \sigma \circ \overline{\alpha} $$: \text{Gal}_{\text{ID}}(L/F) \rightarrow H \rightarrow \mathcal{G}^{\circ}(K) \rtimes H$, then by Remark \ref{rem from}, there exists an $L$-form $\mathcal{G}^{\circ}_{\chi}$ of $\mathcal{G}^{\circ}$ defined over $F$ such that the action given by 
\begin{align*}
	\eta \ast A_l = \chi(\eta)\eta(A_l)\chi(\eta)^{-1}, \ \ \ \ \ A_l \in \mathcal{G}^{\circ}(L_l), \eta \in \text{Gal}_{\text{ID}}(L/F) 
\end{align*} 
is the Galois action on $\mathcal{G}^{\circ}_{\chi}(L)$. We may view the $F_l$-rational points of $\mathcal{G}^{\circ}_{\chi}$ as lying inside $\mathcal{G}^{\circ}(L_l)$, invariant under the action described above. In this formulation, $D_l$ satisfies the equivariance condition if and only if $D_l \in \mathcal{G}^{\circ}_{\chi}(F_l)$ (see again Remark \ref{rem from}). We also can consider the map $\widetilde{\chi} := \widetilde{\sigma} \circ \beta^{-1} \circ \overline{\alpha}$$: \text{Gal}_{\text{ID}}(L/F) \rightarrow H \rightarrow H \rightarrow \widetilde{\mathcal{G}}^{\circ}(K) \rtimes H$, where $\widetilde{\sigma}$$ : H \rightarrow \widetilde{\mathcal{G}}^{\circ}(K) \rtimes H$ is a regular homomorphic section and $\beta|_H = \text{id}_H$. Therefore there exists an $L$-form $\widetilde{\mathcal{G}}^{\circ}_{\widetilde{\chi}}$ of $\widetilde{\mathcal{G}}^{\circ}$ defined over $F$ with the same $\ast$-action as above. The epimorphism $\beta$$: \widetilde{\mathcal{G}}^{\circ}(K) \rtimes H \rightarrow \mathcal{G}^{\circ}(K) \rtimes H$ induces an epimorphism of $L$-forms $\widehat{\beta}$$: \widetilde{\mathcal{G}}^{\circ}_{\widetilde{\chi}} \rightarrow \mathcal{G}^{\circ}_{\chi}$, because $\beta \circ \widetilde{\chi} = \chi \circ \beta$ and $\beta|_H = \text{id}_H$. Given that $D_l$ satisfies the equivariance condition we have $D_l \in \mathcal{G}^{\circ}_{\chi}(F_l)$. Choose preimages $\widetilde{D_l} \in \widehat{\beta}^{-1}(D_l) \subseteq \widetilde{\mathcal{G}}^{\circ}_{\widetilde{\chi}}(F_l)$. Let $\widetilde{E}/L$ be an IPV-extension defined by the matrices $\widetilde{D_l}$, then by \cite{Mat01}, Theorem 5.12, $\widetilde{E} \ge E$ up to an ID-isomorphism. Further, by \cite{Mat01},Theorem 5.1, there exists a monomorphism $\widetilde{\alpha}^{\circ}$$: \text{Gal}_{\text{ID}}(\widetilde{E}/L) \rightarrow \widetilde{\mathcal{G}}^{\circ}(K)$ which is a solution of the corresponding connected ID-embedding problem $\mathcal{E}(\beta^{\circ},\alpha^{\circ})$, where $\beta^{\circ} := \beta|_{\widetilde{\mathcal{G}}^{\circ}}$ and $\alpha^{\circ} := \alpha|_{\text{Gal}_{\text{ID}}(E/L)}$. By Proposition \ref{prop frat ep}, $\widetilde{\alpha}^{\circ}$ is proper, so $\text{Gal}_{\text{ID}}(\widetilde{E}/L) \cong \widetilde{\mathcal{G}}^{\circ}(K)$. Since $\widetilde{D_l} \in \widetilde{\mathcal{G}}^{\circ}_{\widetilde{\chi}}(F_l)$ we have that $\widetilde{D_l}$ are $H$-equivariant. Hence by Theorem \ref{thm semidirect}, $\widetilde{E}/F$ is an IPV-extension with monomorphism $\widetilde{\alpha}$$ : \text{Gal}_{\text{ID}}(\widetilde{E}/F) \rightarrow \widetilde{\mathcal{G}}^{\circ}(K) \rtimes H$. and $\widetilde{\alpha}|_{\text{Gal}_{\text{ID}}(E/F)} = \alpha$. Since $\widetilde{\alpha}^{\circ}$ is an isomorphism, the map $\widetilde{\alpha}$ is also an isomorphism and therefore a $H$-effective, proper solution of the initial embedding problem. 
\end{proof}

\section{ID-Embedding Problems with Finite Kernel} 

By an easy calculation we obtain the following useful lemma. 

\begin{lemma} \label{lem fibre prod}
Let $E/F$, $\widetilde{E}/F$ and $E \cap \widetilde{E}/F$ be IPV-extensions. Then the canonical map $\emph{Gal}_{\emph{ID}}(E \cdot \widetilde{E}/F) \rightarrow \emph{Gal}_{\emph{ID}}(E/F) \times_{\emph{Gal}_{\emph{ID}}(E \cap \widetilde{E}/F)} \emph{Gal}_{\emph{ID}}(\widetilde{E}/F)$ is an isomorphism of groups. Here the product on the right hand side is the fibre product. 
\end{lemma}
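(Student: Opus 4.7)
The plan is to reduce the claim to standard group theory via the Galois correspondence (the main theorem of differential Galois theory stated earlier). Let me abbreviate $\mathcal{G} := \text{Gal}_{\text{ID}}(E\widetilde{E}/F)$ (noting that $E\widetilde{E}/F$ is an IPV-extension as the compositum of IPV-extensions), and put $\mathcal{H}_1 := \text{Gal}_{\text{ID}}(E\widetilde{E}/E)$ and $\mathcal{H}_2 := \text{Gal}_{\text{ID}}(E\widetilde{E}/\widetilde{E})$. Because $E/F$ and $\widetilde{E}/F$ are IPV-extensions, part (2) of the Galois correspondence gives that $\mathcal{H}_1$ and $\mathcal{H}_2$ are closed normal subgroups of $\mathcal{G}$, with $\mathcal{G}/\mathcal{H}_1 \cong \text{Gal}_{\text{ID}}(E/F)$ and $\mathcal{G}/\mathcal{H}_2 \cong \text{Gal}_{\text{ID}}(\widetilde{E}/F)$ via restriction.

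First I would identify $\mathcal{H}_{E \cap \widetilde{E}} := \text{Gal}_{\text{ID}}(E\widetilde{E}/(E \cap \widetilde{E}))$ with the subgroup generated by $\mathcal{H}_1$ and $\mathcal{H}_2$. Indeed, by part (1) of the Galois correspondence an element of $E\widetilde{E}$ is fixed by both $\mathcal{H}_1$ and $\mathcal{H}_2$ iff it lies in $E \cap \widetilde{E}$, so the fixed field of $\langle \mathcal{H}_1, \mathcal{H}_2\rangle$ is exactly $E \cap \widetilde{E}$, giving $\langle \mathcal{H}_1,\mathcal{H}_2\rangle = \mathcal{H}_1\mathcal{H}_2 = \mathcal{H}_{E\cap\widetilde{E}}$ (the product being a group since both factors are normal). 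Since $(E\cap\widetilde{E})/F$ is an IPV-extension, this common subgroup is normal, and the correspondence yields $\mathcal{G}/(\mathcal{H}_1\mathcal{H}_2)\cong\text{Gal}_{\text{ID}}((E\cap\widetilde{E})/F)$.

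Next I would verify that the canonical restriction map $\gamma \mapsto (\gamma|_E,\gamma|_{\widetilde{E}})$ coincides with the natural map
\begin{align*}
\Phi : \mathcal{G} \longrightarrow \mathcal{G}/\mathcal{H}_1 \times_{\mathcal{G}/(\mathcal{H}_1\mathcal{H}_2)} \mathcal{G}/\mathcal{H}_2, \qquad \gamma \mapsto (\gamma\mathcal{H}_1,\gamma\mathcal{H}_2),
\end{align*}
and that its image does lie in the fibre product because both components agree modulo $\mathcal{H}_1\mathcal{H}_2$. Injectivity is immediate: $\ker(\Phi) = \mathcal{H}_1 \cap \mathcal{H}_2 = \text{Gal}_{\text{ID}}(E\widetilde{E}/(E\cdot\widetilde{E})) = 1$. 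For surjectivity, given $(\sigma\mathcal{H}_1,\tau\mathcal{H}_2)$ in the fibre product, the compatibility condition says $\sigma^{-1}\tau \in \mathcal{H}_1\mathcal{H}_2$, so we can write $\sigma^{-1}\tau = h_1 h_2$ with $h_i \in \mathcal{H}_i$, and $\gamma := \sigma h_1 = \tau h_2^{-1}$ satisfies $\gamma\mathcal{H}_1 = \sigma\mathcal{H}_1$ and $\gamma\mathcal{H}_2 = \tau\mathcal{H}_2$, as required.

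The only step that is not entirely mechanical is establishing that $E\widetilde{E}/F$ is itself an IPV-extension so that the Galois correspondence applies. I would argue this by taking an IPV-ring $R$ for a defining ID-module of $E/F$ inside $\widetilde{E}$ (equivalently, generating $R$ inside a common ambient ID-field, e.g.\ the one containing both $E$ and $\widetilde{E}$), forming the compositum of IPV-rings inside $E\widetilde{E}$, and checking the three conditions for an IPV-ring: simplicity follows from the fact that constants in $E\widetilde{E}$ equal $K$ (since both $E,\widetilde{E}$ have constant field $K$), and generation by fundamental solution matrices of the direct-sum system is immediate. With $E\widetilde{E}/F$ identified as an IPV-extension, the Galois correspondence delivers the remainder of the proof essentially for free.
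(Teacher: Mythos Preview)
Your argument is correct and supplies precisely the details that the paper omits: the paper does not actually prove this lemma, remarking only that it follows ``by an easy calculation.'' The route you take---reducing via the Galois correspondence to the standard group-theoretic fact that $\mathcal{G} \to \mathcal{G}/\mathcal{H}_1 \times_{\mathcal{G}/(\mathcal{H}_1\mathcal{H}_2)} \mathcal{G}/\mathcal{H}_2$ is an isomorphism when $\mathcal{H}_1,\mathcal{H}_2 \trianglelefteq \mathcal{G}$ and $\mathcal{H}_1 \cap \mathcal{H}_2 = 1$---is exactly the intended calculation.

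One small imprecision worth tightening: your claim that simplicity of the compositum IPV-ring ``follows from the fact that constants in $E\widetilde{E}$ equal $K$'' is not quite the right direction; having no new constants is a \emph{consequence} of simplicity rather than a substitute for it. The clean way to phrase this step is that if $R$ and $\widetilde{R}$ are IPV-rings for ID-modules $M$ and $\widetilde{M}$ respectively, then the IPV-ring for the direct sum $M \oplus \widetilde{M}$ exists (by the general existence theorem for IPV-rings in \cite{Mat01}), is generated by the block-diagonal fundamental solution matrix, and has quotient field $E\widetilde{E}$; simplicity is then part of the definition rather than something to be checked separately. This is a matter of exposition, not a gap in the mathematics.
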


\begin{thm} \label{thm solution finite kernel}
Let $F$ be an algebraic function field in one variable over $K$. Then every ID-embedding problem in $\emph{\textbf{AffGr}}_K^{\emph{red}}$ over $F$ with finite kernel has a proper solution. 
\end{thm}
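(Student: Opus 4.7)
The plan is to reduce the finite-kernel ID-embedding problem, via the Galois correspondence, to a classical (non-differential) finite embedding problem for the absolute Galois group $G_F = \text{Gal}(F^{\text{sep}}/F)$, and splice the solution back in using Lemma~\ref{lem fibre prod}. The external input is the well-known fact that for $F$ an algebraic function field in one variable over an algebraically closed field, $G_F$ is free profinite of countable rank (classical in characteristic zero, a theorem of Harbater--Pop in positive characteristic), so every finite embedding problem over $F$ has a proper solution.

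Set $\mathcal{A}_0 := \mathcal{A} \cap \widetilde{\mathcal{G}}^\circ$, a finite normal subgroup of $\widetilde{\mathcal{G}}$. By Lemma~\ref{lem decom1}, $\beta$ decomposes as $\beta_2 \circ \beta_1$ where $\beta_1: \widetilde{\mathcal{G}} \twoheadrightarrow \widetilde{\mathcal{G}}/\mathcal{A}_0$ has kernel $\mathcal{A}_0 \subseteq \widetilde{\mathcal{G}}^\circ$, and $\beta_2: \widetilde{\mathcal{G}}/\mathcal{A}_0 \twoheadrightarrow \mathcal{G}$ has kernel $\mathcal{A}/\mathcal{A}_0$ meeting the connected component $(\widetilde{\mathcal{G}}/\mathcal{A}_0)^\circ = \widetilde{\mathcal{G}}^\circ/\mathcal{A}_0$ trivially. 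By Proposition~\ref{prop emb} it suffices to verify that each $\beta_i$ is an embedding epimorphism.

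For $\beta_2$, the fixed field $L := E^{\mathcal{G}^\circ(K)}$ is a finite Galois extension of $F$ with group $\pi_0(\mathcal{G})$ by Theorem~\ref{finite gal extension}. Since $\mathcal{A}/\mathcal{A}_0$ is transverse to the connected component, the induced map on component groups $\pi_0(\widetilde{\mathcal{G}}/\mathcal{A}_0) \twoheadrightarrow \pi_0(\mathcal{G}) = \text{Gal}(L/F)$ carries the same kernel as $\beta_2$. By freeness of $G_F$ this finite embedding problem has a proper solution $\widetilde{L}/F$, which may be chosen linearly disjoint from $E$ over $L$. Then $\widetilde{E}_2 := E \cdot \widetilde{L}$ is an IPV-extension of $F$, and Lemma~\ref{lem fibre prod} identifies its Galois group with $\mathcal{G}(K) \times_{\pi_0(\mathcal{G})} \pi_0(\widetilde{\mathcal{G}}/\mathcal{A}_0) \cong (\widetilde{\mathcal{G}}/\mathcal{A}_0)(K)$. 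For $\beta_1$, the kernel $\mathcal{A}_0 \subseteq \widetilde{\mathcal{G}}^\circ$ is finite and normal in a connected group, hence central there; Lemma~\ref{lem decom2} splits $\beta_1$ further as a Frattini-epimorphism followed by a split epimorphism (both with kernel $\mathcal{A}_0$). The Frattini piece is solved by Theorem~\ref{thm solution frat} after the $H$-rigid reduction of Lemma~\ref{lem H-semi frat decom}, with the residual type-$\mu$ piece having finite kernel $H \cap \mathcal{G}^\circ$ meeting the connected component of $\mathcal{G}^\circ \rtimes H$ trivially, so falling under the $\beta_2$ argument. For the split piece, the section combined with centrality forces $\widetilde{\mathcal{G}}^\circ \cong \mathcal{A}_0 \times (\widetilde{\mathcal{G}}^\circ/\mathcal{A}_0)$, so a proper solution is obtained by adjoining a finite Galois extension realizing $\mathcal{A}_0$ linearly disjoint from what has already been built, and invoking Lemma~\ref{lem fibre prod} again.

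The principal technical obstacle will be verifying that the finite Galois extensions of $F$ furnished abstractly by the freeness of $G_F$ carry iterative derivations compatible with $\partial^*_F$---so that compositum $E \cdot \widetilde{L}$ is genuinely an IPV-extension of $F$ with the Galois group predicted by Lemma~\ref{lem fibre prod}---and arranging the linear disjointness coherently at each (possibly recursive) application. A secondary subtlety lies in the split case of $\beta_1$: one must propagate the direct product decomposition from the connected component $\widetilde{\mathcal{G}}^\circ$ to all of $\widetilde{\mathcal{G}}$ and verify coherence with the pieces already constructed.
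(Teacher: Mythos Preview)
Your approach is essentially the paper's, just organized differently: the paper applies Lemma~\ref{lem decom2} to $\beta$ first (Frattini, then split), while you first slice by $\mathcal{A}_0 = \mathcal{A} \cap \widetilde{\mathcal{G}}^\circ$; both routes land on the same three subcases---$H$-rigid Frattini (Theorem~\ref{thm solution frat}), type-$\mu$ (finite embedding plus Lemma~\ref{lem fibre prod}), and split---and both feed the finite pieces to the freeness of $\pi_1^{\text{alg}}(F)$. Your $\beta_2$ argument is correct and is precisely the paper's Case~2; the fibre-product identification $(\widetilde{\mathcal{G}}/\mathcal{A}_0)(K) \cong \mathcal{G}(K) \times_{\pi_0(\mathcal{G})} \pi_0(\widetilde{\mathcal{G}}/\mathcal{A}_0)$ holds because $\beta_2$ restricts to an isomorphism on connected components, and the linear disjointness $\widetilde{L} \cap E = L$ is automatic since $L$ is the algebraic closure of $F$ in $E$.

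The one genuine gap is your split case. Adjoining a Galois extension with group $\mathcal{A}_0$ over $F$, linearly disjoint from everything built so far, yields via Lemma~\ref{lem fibre prod} the \emph{direct} product $\mathcal{A}_0 \times \mathcal{U}$, not the semidirect product $\mathcal{A}_0 \rtimes \mathcal{U}$ you need when $\pi_0(\mathcal{U})$ acts nontrivially on $\mathcal{A}_0$. The direct-product decomposition you note on $\widetilde{\mathcal{G}}^\circ$ does not propagate upward. The fix---and this is exactly the paper's Case~3---is to first pass (via a type-$\mu$ piece, which you already know how to handle) to $\mathcal{U} \cong \mathcal{U}^\circ \rtimes H$, observe that the connected group $\mathcal{U}^\circ$ acts trivially on the finite group $\mathcal{A}_0$ so that $\mathcal{A}_0 \rtimes (\mathcal{U}^\circ \rtimes H) \cong \mathcal{U}^\circ \rtimes (\mathcal{A}_0 \rtimes H)$, and then solve the finite split embedding problem $\mathcal{A}_0 \rtimes H \twoheadrightarrow H$ over $L = E^{\mathcal{U}^\circ(K)}$ rather than merely realizing $\mathcal{A}_0$ over $F$. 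The compositum with $E$ then has the correct Galois group. Two smaller points: your first flagged obstacle is a non-issue, since Theorem~\ref{finite gal extension} says finite Galois extensions of $F$ are exactly the finite IPV-extensions, so the compatible iterative derivation comes for free; and in Lemma~\ref{lem decom2} only the split factor inherits the full kernel---the Frattini factor has kernel $\mathcal{U} \cap \mathcal{A}_0$, not $\mathcal{A}_0$.
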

\begin{proof}
Let the following ID-embedding problem with finite kernel be given: 
\begin{align*}
	\begin{xy}
  	\xymatrix{
        &    &   & \text{Gal}_{\text{ID}}(E/F) \ar[d]_{\cong}^{\alpha} & \\
     		1 \ar[r] &  \mathcal{A}(K) \ar[r] & \widetilde{\mathcal{G}}(K) \ar[r]^{\beta}  & \mathcal{G}(K) \ar[r] & 1. 
  	}
	\end{xy} 
\end{align*} 
If $\widetilde{\mathcal{G}}$ is connected, this ID-embedding problem is a Frattini ID-embedding problem by Lemma \ref{lem frat class}. Moreover this embedding problem is $H$-rigid where in this situation the group $H$ is trivial. Otherwise if $\widetilde{\mathcal{G}}$ is non-connected, $\beta$ is decomposable into a split epimorphism and a Frattini-epimorphism (see Lemma \ref{lem decom2}). Further by Lemma \ref{lem H-semi frat decom}, this Frattini-epimorphism is decomposable into a Frattini-epimorphism of type $\mu$ (see Definition \ref{defn mu}) and an $H$-rigid Frattini-epimorphism. So we have to consider three cases: \\
\emph{1. $\beta$ is an $H$-rigid Frattini-epimorphism:} \\ 
Since $\text{cd}(F) \le 1$ these ID-embedding problems have an $H$-effective, proper solution by Proposition \ref{thm solution frat}. \\ 
\emph{2. $\beta$ is a Frattini-epimorphism of type $\mu$:} \\
Let 
\begin{align*}
	\begin{xy}
  	\xymatrix{
        &    &   & \text{Gal}_{\text{ID}}(E/F) \ar[d]_{\cong}^{\alpha} & \\
     		1 \ar[r] &  \mathcal{G}^{\circ}(K) \cap H \ar[r] & \mathcal{G}^{\circ}(K) \rtimes H \ar[r]^{\beta}  & \mathcal{G}^{\circ}(K) \cdot H \ar[r] & 1 
  	}
	\end{xy} 
\end{align*}
be an ID-embedding problem of type $\mu$. Since $\mathcal{G}^{\circ}$ is a normal subgroup of $\mathcal{G}^{\circ} \cdot H$, there exists an IPV-extension $L/F$, such that $\text{Gal}_{\text{ID}}(E/L) \cong \mathcal{G}^{\circ}(K)$. Therefore, $\text{Gal}_{\text{ID}}(L/F) \cong H/(H \cap \mathcal{G}^{\circ}(K))$ and this leads to a finite ID-embedding problem of the form 
\begin{align*}
	\begin{xy}
  	\xymatrix{
       & & \text{Gal}_{\text{ID}}(\widetilde{L}/F) \ar[r]^{\text{res}} \ar[d]_{\widetilde{\vartheta}} & \text{Gal}_{\text{ID}}(L/F) \ar[d]_{\cong}^{\vartheta} &\\
     		1 \ar[r] &  \mathcal{G}^{\circ}(K) \cap H \ar[r] &  H \ar[r]^/-.5cm/{\overline{\beta}}  & H/(H \cap \mathcal{G}^{\circ}(K)) \ar[r] & 1. 
  	}
	\end{xy} 
\end{align*} 
Since $\pi^{\text{alg}}_1(F)$ is free (\cite{MM99}, Corollary V.2.11), this embedding problem has a proper solution $\widetilde{\vartheta}$$: \text{Gal}_{\text{ID}}(\widetilde{L}/F) \stackrel{\cong}{\rightarrow} H$, where without loss of generality it can be assumed that $\widetilde{L} \cap E = L$. We take the composite $\widetilde{E} := E \cdot \widetilde{L}$, and $\widetilde{E}/F$ is an IPV-extension. By Lemma \ref{lem fibre prod}, we see that $\text{Gal}_{\text{ID}}(\widetilde{E}/F) \cong \text{Gal}_{\text{ID}}(E/F) \times_{\text{Gal}_{\text{ID}}(L/F)} \text{Gal}_{\text{ID}}(\widetilde{L}/F)$ and hence 
\begin{align*}
   \text{Gal}_{\text{ID}}(\widetilde{E}/F) & \cong \text{Gal}_{\text{ID}}(E/F) \times_{\text{Gal}_{\text{ID}}(L/F)} \text{Gal}_{\text{ID}}(\widetilde{L}/F) \\
   & \cong \mathcal{G}^{\circ}(K) \cdot H \times_{H/(H \cap \mathcal{G}^{\circ}(K))} H  \cong \mathcal{G}^{\circ}(K) \rtimes H 
\end{align*} 
is a proper solution of the initial ID-embedding problem. \\
\emph{3. $\beta$ is a split epimorphism:} \\
Thanks to case 2, we can assume that $\mathcal{G} \cong \mathcal{G}^{\circ} \rtimes H$ with finite group $H$. Therefore we have to solve the following split ID-embedding problem: 
\begin{align*} \tag{$\ast$}
	\begin{xy}
  	\xymatrix{
        &    &   & \text{Gal}_{\text{ID}}(E/F) \ar[d]_{\cong}^{\alpha} & \\
     		1 \ar[r] &  \mathcal{A} \ar[r] & \mathcal{A} \rtimes (\mathcal{G}^{\circ} \rtimes H)(K) \ar[r]^/0.2cm/{\beta}  & (\mathcal{G}^{\circ} \rtimes H)(K) \ar[r] & 1 
  	}
	\end{xy} 
\end{align*} 
Since $\mathcal{A}$ is a finite group, $\mathcal{G}^{\circ}$ acts trivially on $\mathcal{A}$. So $H$ acts on $\mathcal{A}$ and this induces the following finite, split ID-embedding problem: 
\begin{align*}
	\begin{xy}
  	\xymatrix{
        &    &   & \text{Gal}_{\text{ID}}(L/F) \ar[d]_{\cong}^{\alpha} & \\
     		1 \ar[r] &  \mathcal{A} \ar[r] & \mathcal{A} \rtimes H \ar[r]^/0.2cm/{\beta}  &  H \ar[r] & 1, 
  	}
	\end{xy} 
\end{align*} 
where $L := E^{\mathcal{G}^{\circ}(K)}$. This ID-embedding problem has a proper solution $\widetilde{L}$, since $\pi^{\text{alg}}_1(F)$ is free (see \cite{MM99}, Corollary V.2.11). Therefore, we can take the composite $\widetilde{E} := E \cdot \widetilde{L}$, and 
\begin{align*}
	\text{Gal}_{\text{ID}}(\widetilde{E}/F) & \cong \mathcal{G}^{\circ}(K) \rtimes ( \mathcal{A} \rtimes H) \cong (\mathcal{G}^{\circ}(K) \times  \mathcal{A}) \rtimes H \cong (\mathcal{A} \times \mathcal{G}^{\circ}(K)) \rtimes H \\
	 & \cong \mathcal{A} \rtimes \mathcal{G}^{\circ}(K) \rtimes H
\end{align*} 
is a proper solution of the ID-embedding problem $(\ast)$. 
\end{proof}

\section{ID-Embedding Problems with Unipotent Kernel} 

The next theorem is an extension of Theorem \ref{thm semidirect}. 

\begin{thm} \label{thm semidirect 2}
Let $F$ be an ID-field with field of constants $K$ and $\emph{cd}(F) \le 1$. Let $H$ be a finite group, $\mathcal{G}^{\circ}$ be an $H$-group with semidirect product $\mathcal{G} := \mathcal{G}^{\circ} \rtimes H \in \emph{\textbf{AffGr}}_K^{\emph{red}}$ and let $\mathcal{U} \in \emph{\textbf{AffGr}}_K^{\emph{red}}$ be a connected group with semidirect product $\widetilde{\mathcal{G}} := \mathcal{U} \rtimes \mathcal{G} \in \emph{\textbf{AffGr}}_K^{\emph{red}}$. 
\begin{enumerate} 
 \item Let $E/F$ be an IPV-extension with isomorphim $\alpha$$: \emph{Gal}_{\emph{ID}}(E/F) \stackrel{\cong}\rightarrow \mathcal{G}(K), \\ \gamma \mapsto C_{\gamma}$ and let $L := E^{\mathcal{G}^{\circ}(K)}$. Then for all $l \in \mathbb{N}$ there exist matrices $Z_l \in \emph{GL}_n(L_l)$ and $Y_l \in \mathcal{G}^{\circ}(E_l)$ such that $\widetilde{Y}_l = Z_lY_l$ satisfying $E = F(\widetilde{Y}_0)$ and $\gamma(\widetilde{Y}_l) = \widetilde{Y}_lC_{\gamma}$ for all $\gamma \in \emph{Gal}_{\emph{ID}}(E/F)$. 
\item Let $U_l \in \mathcal{U}(E_l)$ be $\mathcal{G}^{\circ} \rtimes H$-equivariant via $\alpha$, i.e. $\gamma(U_l) = C^{-1}_{\gamma}U_lC_{\gamma}$ for all $\gamma \in \emph{Gal}_{\emph{ID}}(E/F)$. Then $U_l$ define an IPV-extension $\widetilde{E}/E$ with fundamental solution matrices $X_l \in \mathcal{U}(\widetilde{E}_l)$ and monomorphism $\widetilde{\alpha}_E$$: \emph{Gal}_{\emph{ID}}(\widetilde{E}/E) \rightarrow \mathcal{U}(K)$. 
\item Further $\widetilde{E}/F$ is an IPV-extension with fundamental solution matrices $\widetilde{X}_l = \widetilde{Y}_lX_l$ and monomorphism $\widetilde{\alpha}$$: \emph{Gal}_{\emph{ID}}(\widetilde{E}/F) \rightarrow \widetilde{\mathcal{G}}(K)$, such that \newline $\widetilde{\alpha}|_{\emph{Gal}_{\emph{ID}}(E/F)} = \alpha$ and $\widetilde{\alpha}|_{\emph{Gal}_{\emph{ID}}(\widetilde{E}/E)} = \widetilde{\alpha}_E$. 
\end{enumerate}
\end{thm}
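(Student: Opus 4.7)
The plan is to reduce each part to results already established (especially Theorem \ref{thm semidirect}) and then glue the pieces by the semidirect-product calculus, using the equivariance hypothesis as the bridge.

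For part (1), I would simply specialize Theorem \ref{thm semidirect} to the tower $F \le L \le E$. Since $\mathcal{G}^{\circ}(K) \unlhd \mathcal{G}(K)$ is normal and reduced, the Galois correspondence gives that $L/F$ is a finite IPV-extension with $\text{Gal}_{\text{ID}}(L/F) \cong H$ and that $E/L$ is an IPV-extension with Galois group $\mathcal{G}^{\circ}(K)$. Applying Theorem \ref{thm semidirect}(1) to $L/F$ produces matrices $Z_l \in \text{GL}_n(L_l)$ on which $H$ acts via $\chi$, while (2) yields $Y_l \in \mathcal{G}^{\circ}(E_l)$ with the correct transformation rule for $\text{Gal}_{\text{ID}}(E/L)$. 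Part (3) of Theorem \ref{thm semidirect} then assembles $\widetilde{Y}_l := Z_l Y_l$ into a fundamental solution matrix for $E/F$ satisfying $\gamma(\widetilde{Y}_l) = \widetilde{Y}_l C_\gamma$ for every $\gamma \in \text{Gal}_{\text{ID}}(E/F)$, and $E = F(\widetilde{Y}_0)$ follows from effectivity over $L$ (Theorem \ref{thm cd}) combined with $L = F(Z_0)$.

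For part (2), I regard the matrices $U_l \in \mathcal{U}(E_l)$ as defining an iterative differential equation over $E$. The existence of an IPV-ring $\widetilde{R}/E$ together with a fundamental solution matrix $X_l \in \text{GL}_n(\widetilde{E}_l)$, and with associated monomorphism $\widetilde{\alpha}_E \colon \text{Gal}_{\text{ID}}(\widetilde{E}/E) \hookrightarrow \text{GL}_n(K)$, is the standard existence statement from \cite{Mat01}. Since $U_l \in \mathcal{U}(E_l)$, Theorem \ref{thm cd} applied to $E$ (which has $\text{cd}(E) \le 1$ as a finitely generated extension in positive characteristic over $F$, cf.\ \cite{Ser97}) lets us replace $X_l$ by $C_l X_l C_{l+1}^{-1}$ so that $X_l$ lies in $\mathcal{U}(\widetilde{E}_l)$, and the image of $\widetilde{\alpha}_E$ consequently lies in $\mathcal{U}(K)$.

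Part (3) is where the real work lies. Define $\widetilde{X}_l := \widetilde{Y}_l X_l$; then $\widetilde{E} = E(X_0) = F(\widetilde{X}_0)$. The computation $\widetilde{X}_{l+1} = \widetilde{D}_l^{-1} \widetilde{Y}_l U_l^{-1} X_l = \widetilde{V}_l^{-1} \widetilde{X}_l$ forces me to set $\widetilde{V}_l = (\widetilde{Y}_l U_l \widetilde{Y}_l^{-1}) \cdot \widetilde{D}_l$. A priori $\widetilde{V}_l \in \widetilde{\mathcal{G}}(E_l)$, but a direct calculation using the equivariance $\gamma(U_l) = C_\gamma^{-1} U_l C_\gamma$ together with $\gamma(\widetilde{Y}_l) = \widetilde{Y}_l C_\gamma$ yields $\gamma(\widetilde{Y}_l U_l \widetilde{Y}_l^{-1}) = \widetilde{Y}_l U_l \widetilde{Y}_l^{-1}$ for all $\gamma \in \text{Gal}_{\text{ID}}(E/F)$, so $\widetilde{Y}_l U_l \widetilde{Y}_l^{-1} \in \mathcal{U}(F_l)$ and hence $\widetilde{V}_l \in \widetilde{\mathcal{G}}(F_l)$. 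This shows $\widetilde{E}/F$ is an IPV-extension defined over $F$, with fundamental solution matrices $\widetilde{X}_l$. To build $\widetilde{\alpha}$, I extend each $\gamma \in \text{Gal}_{\text{ID}}(E/F)$ to an automorphism $\widetilde{\gamma}$ of $\widetilde{E}$ by decreeing $\widetilde{\gamma}(X_l) = C_\gamma^{-1} X_l C_\gamma$; this formula is well defined because it is compatible with the recursion $X_{l+1} = U_l^{-1} X_l$ via the equivariance, and it gives $\widetilde{\gamma}(\widetilde{X}_l) = \widetilde{X}_l C_\gamma$. Combined with $\widetilde{\alpha}_E$ on $\text{Gal}_{\text{ID}}(\widetilde{E}/E)$, this produces a monomorphism $\widetilde{\alpha} \colon \text{Gal}_{\text{ID}}(\widetilde{E}/F) \to \widetilde{\mathcal{G}}(K)$ with the two required restrictions.

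I expect the main technical obstacle to be the descent step showing $\widetilde{Y}_l U_l \widetilde{Y}_l^{-1} \in \mathcal{U}(F_l)$, i.e.\ verifying that the equivariance condition exactly cancels the Galois action on the conjugated matrix, and that this conjugation stays inside the normal subgroup $\mathcal{U}$ of $\widetilde{\mathcal{G}}$; everything else is bookkeeping around the semidirect product.
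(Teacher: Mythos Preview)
Your treatment of parts (1) and (2) matches the paper's: both invoke Theorem \ref{thm semidirect} (and implicitly Proposition \ref{prop semidirect}) for the decomposition $\widetilde{Y}_l = Z_l Y_l$, and both appeal to the standard existence/effectivity results over $E$ for the $X_l$.

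For part (3), however, your route diverges from the paper's and the obstacle you flag at the end is real and unresolved. The matrices $Z_l$ produced by Theorem \ref{thm semidirect} lie only in $\text{GL}_n(L_l)$, not in $\widetilde{\mathcal{G}}(L_l)$; consequently conjugation by $\widetilde{Y}_l = Z_l Y_l$ has no reason to preserve $\mathcal{U}$, and your assertion $\widetilde{Y}_l U_l \widetilde{Y}_l^{-1} \in \mathcal{U}(F_l)$ is unjustified (the Galois-invariance calculation gives you only membership in $\text{GL}_n(F_l)$). The subsequent step of extending $\gamma$ to $\widetilde{\gamma}$ ``by decree'' is also incomplete: compatibility with the recursion $X_{l+1} = U_l^{-1} X_l$ is necessary but not sufficient, since you must still check that the maximal ID-ideal cutting out the IPV-ring inside $E[X_{ij},\det(X)^{-1}]$ is stable under the proposed $\gamma$-semilinear map.

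The paper sidesteps both issues by a two-step descent. Instead of conjugating by $\widetilde{Y}_l$, it sets $\widehat{U}_l := Y_l U_l Y_{l+1}^{-1}$ with $Y_l \in \mathcal{G}^{\circ}(E_l)$; since $\mathcal{G}^{\circ} \le \widetilde{\mathcal{G}}$ this product genuinely lies in $\widetilde{\mathcal{G}}^{\circ}$, and the equivariance hypothesis for $\gamma \in \text{Gal}_{\text{ID}}(E/L)$ gives $\widehat{U}_l \in \widetilde{\mathcal{G}}^{\circ}(L_l)$. One then checks $\beta(\widehat{U}_l) = D_l$, invokes \cite{Mat01}, Theorem~5.12 to get $E \le L(\widehat{X}_0) = \widetilde{E}$, and finally verifies that the $\widehat{U}_l$ are $H$-equivariant, so that Theorem \ref{thm semidirect}(3) applies directly to the tower $F \le L \le \widetilde{E}$ and furnishes both the IPV-structure of $\widetilde{E}/F$ and the monomorphism $\widetilde{\alpha}$. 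In short: descend first to $L$ using $Y_l$ (where normality of $\mathcal{U}$ is available), then from $L$ to $F$ via $H$-equivariance and the already-proved Theorem \ref{thm semidirect}, rather than attempting the descent to $F$ in one shot with $\widetilde{Y}_l$.
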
 
\begin{proof}
With notation as in Theorem \ref{thm semidirect} we consider the following ID-embedding problem: 
\begin{align*}
	\begin{xy}
  	\xymatrix{
        &    & \text{Gal}_{\text{ID}}(\widetilde{E}/F) \ar[r]^{\text{res}} \ar[d]^{\widetilde{\alpha}}  & \text{Gal}_{\text{ID}}(E/F)  \ar[r]^{\text{res}} \ar[d]_{\cong}^{\alpha} & \text{Gal}_{\text{ID}}(L/F) \ar[ld]_{\chi} \\
     		1 \ar[r] &  \mathcal{U}(K) \ar[r] & \mathcal{U}(K) \rtimes (\mathcal{G}^{\circ}(K) \rtimes H) \ar[r]^/0.3cm/{\widetilde{\beta}}  & \mathcal{G}^{\circ}(K) \rtimes H \ar[r] \ar@<4pt>^/-0.3cm/{\widetilde{\sigma}}[l] & 1.  
  	}
	\end{xy} 
\end{align*} 
Remark that the upper row in this diagram is not an exact sequence. \\
\emph{1.} The first claim follows by Theorem \ref{thm semidirect}. For the second claim we decompose $\gamma \in \text{Gal}(E/F)$ into $\epsilon \circ \eta$, as in Remark \ref{rem decomp automorphism}. Then $\gamma(\widetilde{Y}_l) = \eta(Z_l)\epsilon(\eta(Y_l)) = Z_lC_{\eta}\epsilon(C^{-1}_{\eta}Y_lC_{\eta}) = Z_l\epsilon(Y_l)C_{\eta} = Z_lY_lC_{\epsilon}C_{\eta} = \widetilde{Y}_lC_{\gamma}$. \\ 
\emph{2.} Since $U_l \in \mathcal{U}(E_l)$, the IPV-extension $\widetilde{E}/E$ has fundamental solution matrices $X_l \in \mathcal{U}(\widetilde{E}_l)$ with $X_{l+1} = U_lX_l$ and hence we get a monomorphism $\widetilde{\alpha}_{\mathcal{U}}$$: \text{Gal}_{\text{ID}}(\widetilde{E}/E) \rightarrow \mathcal{U}(K)$. \\
\emph{3.} With notation as in Theorem \ref{thm semidirect} we have that $Y_{l+1} = D^{-1}_lY_l$. Let $\widehat{U_l} := Y_lU_lY^{-1}_{l+1}$, then $\gamma(\widehat{U}_l) = \gamma(Y_lU_lY^{-1}_{l+1}) = Y_lC_{\gamma}C^{-1}_{\gamma}U_lC_{\gamma}C^{-1}_{\gamma}Y^{-1}_{l+1} = \widehat{U}_l$ for all $\gamma \in \text{Gal}(E/L)$ and therefore $\widehat{U}_l \in \widetilde{\mathcal{G}}^{\circ}(L_l)$. A quick calculation shows that $\widehat{X}_l := Y_lX_l \in (\mathcal{U} \rtimes \mathcal{G}^{\circ})(\widetilde{E}_l)$ are fundamental solution matrices for $\widehat{U}_l$. Given that $\beta(\widehat{U}_l) = D_l$, we get by \cite{Mat01}, Theorem 5.12, that $\widetilde{E} \ge L(\widehat{X}) \ge E$ up to an ID-isomorphism, where $\widehat{X} := \widehat{X}_0$. Since $X_l = Y^{-1}_l\widehat{X}_l$, we have $L(\widehat{X}) = E(\widehat{X}) = \widetilde{E}$. \\ 
Let $\widetilde{\eta} := \alpha^{-1} \circ \chi(\eta) \in \text{Gal}_{\text{ID}}(E/F)$ be a preimage of $\eta$, then for all $\eta \in \text{Gal}_{\text{ID}}(L/F)$ the following equation holds 
\begin{align*}
	 \eta(\widehat{U}_l) & = \widetilde{\eta}(\widehat{U}_l) = \widetilde{\eta}(Y_lU_lY^{-1}_{l+1}) = C^{-1}_{\eta}Y_lC_{\eta}C^{-1}_{\widetilde{\eta}}U_lC_{\widetilde{\eta}}C^{-1}_{\eta}Y^{-1}_{l+1}C_{\eta} \\ & = C^{-1}_{\eta}Y_lU_lY^{-1}_{l+1}C_{\eta} = C^{-1}_{\eta}\widehat{U}_lC_{\eta}. 
\end{align*}
That is $\widehat{U}_l$ are $H$-equivariant and so the claim follows by Theorem \ref{thm semidirect}. 
\end{proof}

\begin{lemma} \label{lem unipot equi}
With assumptions as in Theorem \ref{thm semidirect 2} we have: \\ 
The matrices $U_l$ are $\emph{Gal}(E/F)$-equivariant if and only if $U_l \in \widetilde{Y}^{-1}_l\mathcal{U}_{\chi}(F_l)\widetilde{Y}_l$. 
\end{lemma}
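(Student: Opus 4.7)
The plan is to verify both directions by a direct calculation with $V_l := \widetilde{Y}_l U_l \widetilde{Y}_l^{-1}$, relying on the factorisation $\widetilde{Y}_l = Z_l Y_l$ from Theorem \ref{thm semidirect 2}(1). From the defining property $\gamma(\widetilde{Y}_l) = \widetilde{Y}_l C_\gamma$ for $\gamma \in \mathrm{Gal}(E/F)$, one has
\[
\gamma(V_l) \;=\; \widetilde{Y}_l\,\bigl(C_\gamma\, \gamma(U_l)\, C_\gamma^{-1}\bigr)\,\widetilde{Y}_l^{-1},
\]
so the $\mathrm{Gal}(E/F)$-equivariance $\gamma(U_l) = C_\gamma^{-1} U_l C_\gamma$ is tautologically equivalent to $\gamma(V_l) = V_l$ for all $\gamma \in \mathrm{Gal}(E/F)$. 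This reduces the lemma to identifying Galois-invariance of $V_l$ with the condition $V_l \in \mathcal{U}_\chi(F_l)$.

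The core step is this identification. Using $\widetilde{Y}_l = Z_l Y_l$ with $Z_l \in \mathrm{GL}_n(L_l)$, $Y_l \in \mathcal{G}^\circ(E_l)$, and the normality of $\mathcal{U}$ in $\widetilde{\mathcal{G}} = \mathcal{U} \rtimes \mathcal{G}$, the element $W_l := Y_l U_l Y_l^{-1}$ lies in $\mathcal{U}(E_l)$ and satisfies $V_l = Z_l W_l Z_l^{-1}$. Splitting an arbitrary $\gamma \in \mathrm{Gal}(E/F)$ as in Remark \ref{rem decomp automorphism} into $\gamma = \epsilon \circ \widetilde{\eta}$ with $\epsilon \in \mathrm{Gal}(E/L)$ and $\widetilde{\eta} := \alpha^{-1}\circ\chi(\eta)$, the $\mathrm{Gal}(E/L)$-part of the invariance localises $W_l$ into $\mathcal{U}(L_l)$, while the $\widetilde{\eta}$-part, combined with the cocycle relation $\eta(Z_l) = Z_l\chi(\eta)$ from Theorem \ref{thm semidirect}(1), reduces to the twisted invariance
\[
\chi(\eta)\,\eta(W_l)\,\chi(\eta)^{-1} \;=\; W_l \qquad \bigl(\eta \in \mathrm{Gal}(L/F)\bigr).
\]
By Remark \ref{rem from} this is precisely the defining condition for $\mathcal{U}_\chi(F_l) \subseteq \mathcal{U}(L_l)$, so the assertion $V_l \in \mathcal{U}_\chi(F_l)$ — equivalently $U_l \in \widetilde{Y}_l^{-1}\mathcal{U}_\chi(F_l)\widetilde{Y}_l$ — is obtained. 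The converse direction is just the above chain of identifications run in reverse.

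The main obstacle is disentangling the three distinct actions on $\mathcal{U}$ involved in the computation: the Galois action on matrix entries, the conjugation action by $C_\gamma$ coming from the semidirect product structure of $\widetilde{\mathcal{G}}$, and the cocycle twist $\chi$ defining the form $\mathcal{U}_\chi$. Once these are kept apart and their interplay is read off cleanly through the decomposition $\widetilde{Y}_l = Z_l Y_l$ and the identities $\gamma(Y_l) = Y_l C_\gamma$ (for $\gamma \in \mathrm{Gal}(E/L)$) together with $\eta(Z_l) = Z_l\chi(\eta)$, the proof is essentially a one-line manipulation.
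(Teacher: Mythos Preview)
Your argument is essentially the paper's: both introduce $\widetilde{U}_l=\widetilde{Y}_lU_l\widetilde{Y}_l^{-1}$ (your $V_l$), compute its Galois transform using $\gamma(\widetilde{Y}_l)=\widetilde{Y}_lC_\gamma$ together with the decomposition of $\gamma$ from Remark~\ref{rem decomp automorphism}, and read off that equivariance of $U_l$ is the $\ast$-invariance condition defining $\mathcal{U}_\chi(F_l)$. One small bookkeeping point: the $\ast$-invariance $\chi(\eta)\,\eta(\,\cdot\,)\,\chi(\eta)^{-1}=\cdot$ that you actually derive is for $W_l=Y_lU_lY_l^{-1}$, not for $V_l=Z_lW_lZ_l^{-1}$, so the clean conclusion of your computation is $W_l\in\mathcal{U}_\chi(F_l)$; the paper's formulation with $\widetilde{Y}_l$ (and its own somewhat loose computation) is tacitly identifying $\mathcal{U}_\chi(F_l)\subseteq\mathcal{U}(L_l)$ with its conjugate by $Z_l$ inside $\mathrm{GL}_n$, and your final sentence should make that identification explicit rather than jumping from $W_l$ to $V_l$.
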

\begin{proof}
Let $U_l \in \mathcal{U}(E_l)$ be $\text{Gal}(E/F)$-equivariant and let $\widetilde{U}_l := \widetilde{Y}_lU_l\widetilde{Y}^{-1}_l \in \mathcal{U}(E_l)$. By the notations from Theorem \ref{thm semidirect 2} (i), we have that $\widetilde{Y}_l = Z^{-1}_lY_l$ and for all $\gamma = \epsilon \circ \eta \in \text{Gal}(E/F)$ the following equation holds: 
\begin{align*}
	\gamma(\widetilde{U}_l) & = \gamma(Z^{-1}_lY_lU_lY^{-1}_lZ_l) = \eta(Z^{-1}_l)Y_lC_{\gamma}C^{-1}_{\gamma}U_lC_{\gamma}C^{-1}_{\gamma}Y^{-1}_l\eta(Z_l) \\
	& = C^{-1}_{\eta}Z^{-1}_lY_lU_lY^{-1}_lZ_lC_{\eta} = C^{-1}_{\eta}\widetilde{U}_lC_{\eta} = \eta(\widetilde{U}_l) \in \mathcal{U}(L_l). 
\end{align*} 
Therefore the matrices $U_l$ are $\text{Gal}(E/F)$-equivariant if and only if $\widetilde{U}_l \in \mathcal{U}_{\chi}(F_l)$. 
\end{proof}

By \cite{Roe07}, Theorem 9.11, every ``connected`` ID-embedding problems with unipotent kernel has a proper solution. With the work we have done above, we can extend this theorem to the case of $H$-rigid ID-embedding problems. The proof is up to some small details the same as in \cite{Roe07}. 

\begin{thm} \label{thm solution unipot kernel}
Let $F$ be an ID-field. Then every $H$-rigid, $H$-split, $H$-effective ID-embedding problem in $\emph{\textbf{AffGr}}_K^{\emph{red}}$ over $F$ with minimal, reduced, connected, unipotent kernel has an $H$-effective, proper solution. 
\end{thm}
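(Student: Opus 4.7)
The plan is to reduce this $H$-rigid problem to the connected case handled by R\"oscheisen in \cite{Roe07}, using the machinery developed above to descend to a twisted $F$-form of the kernel. Since the embedding problem is $H$-split, we identify $\widetilde{\mathcal{G}}$ with the semidirect product $\mathcal{U} \rtimes (\mathcal{G}^{\circ} \rtimes H)$, where $\mathcal{U}$ is the minimal, reduced, connected, unipotent kernel. Since it is $H$-effective, Definition \ref{def H-eff} supplies fundamental solution matrices $\widetilde{Y}_l = Z_l Y_l$ for $E/F$ with $Z_l \in \mathrm{GL}_n(L_l)$, $Y_l \in \mathcal{G}^{\circ}(E_l)$ and $\alpha = \iota_{\widetilde{Y}_0}$; this places us exactly in the set-up of Theorem \ref{thm semidirect 2}.

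By Theorem \ref{thm semidirect 2}, any matrices $U_l \in \mathcal{U}(E_l)$ which are $\mathrm{Gal}_{\mathrm{ID}}(E/F)$-equivariant via $\alpha$ produce an IPV-extension $\widetilde{E}/F$ together with a monomorphism $\widetilde{\alpha} \colon \mathrm{Gal}_{\mathrm{ID}}(\widetilde{E}/F) \to \widetilde{\mathcal{G}}(K)$ extending $\alpha$. As $\alpha$ is already an isomorphism, $\widetilde{\alpha}$ is proper as soon as its restriction $\widetilde{\alpha}_E \colon \mathrm{Gal}_{\mathrm{ID}}(\widetilde{E}/E) \to \mathcal{U}(K)$ is surjective. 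By Lemma \ref{lem unipot equi}, the equivariance of $U_l$ is equivalent to $\widetilde{U}_l := \widetilde{Y}_l U_l \widetilde{Y}_l^{-1}$ lying in $\mathcal{U}_{\chi}(F_l)$, the $L/F$-form of $\mathcal{U}$. The problem therefore reduces to producing an IDE over $F$, with coefficient matrices in the reduced, connected, unipotent $F$-group $\mathcal{U}_{\chi}$, whose IPV-extension has Galois group all of $\mathcal{U}_{\chi}(K) \cong \mathcal{U}(K)$.

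This is exactly the connected case covered by Theorem~9.11 of \cite{Roe07}: for a minimal, reduced, connected, unipotent kernel, the corresponding connected ID-embedding problem admits a proper solution. Minimality survives the twist, since the closed $F$-subgroups of $\mathcal{U}_{\chi}$ correspond bijectively, via the $\ast$-action of Remark \ref{rem from}, to the $\widetilde{\mathcal{G}}$-stable closed subgroups of $\mathcal{U}$. Applying \cite{Roe07} to $\mathcal{U}_{\chi}$ thus yields matrices $\widetilde{U}_l \in \mathcal{U}_{\chi}(F_l)$ with the desired property. Setting $U_l := \widetilde{Y}_l^{-1} \widetilde{U}_l \widetilde{Y}_l$ and inserting into Theorem \ref{thm semidirect 2} produces the proper solution $\widetilde{E}/F$, while the fundamental solution matrices $\widetilde{X}_l = \widetilde{Y}_l X_l$ supplied by Theorem \ref{thm semidirect 2}(3) realize $\widetilde{\alpha} = \iota_{\widetilde{X}_0}$ in exactly the format demanded by Definition \ref{def H-eff}, so the solution is automatically $H$-effective.

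The main obstacle is the descent step: R\"oscheisen's theorem is phrased for the connected (non-twisted) situation, so one has to verify that its proof invokes only general properties of reduced, connected, unipotent $F$-groups with minimal kernel, and hence applies equally to $\mathcal{U}_{\chi}$. This is presumably the content of the ``small details'' mentioned in the hint, and should come down to transporting R\"oscheisen's Artin--Schreier-style construction of invariant-free IDEs through the twist, a step where no new cohomological input is needed since the relevant forms already come equipped with the correct $F$-structure.
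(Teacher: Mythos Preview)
Your reduction via Theorem \ref{thm semidirect 2} and Lemma \ref{lem unipot equi} to matrices $\widetilde{U}_l \in \mathcal{U}_\chi(F_l)$ is correct and matches the paper. But two of your subsequent claims fail. First, closed $F$-subgroups of $\mathcal{U}_\chi$ correspond, via the $\ast$-action of Remark \ref{rem from}, to $H$-stable closed subgroups of $\mathcal{U}$, not to $\widetilde{\mathcal{G}}$-stable ones: the twist only sees $\chi(\mathrm{Gal}(L/F)) = \sigma(H)$, never $\mathcal{G}^\circ$. So minimality of $\mathcal{U}$ as a $\mathcal{G}^\circ \rtimes H$-invariant normal subgroup does \emph{not} transfer to minimality of $\mathcal{U}_\chi$ as an $F$-group, and you cannot feed $\mathcal{U}_\chi$ into R\"oscheisen's statement as a minimal kernel. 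Second, ``no new cohomological input is needed'' is false: the paper explicitly invokes $H^1(\mathrm{Gal}(L/F),\mathbb{G}_a^m)=1$ to identify $\mathcal{U}_\chi(F_l) \cong \mathcal{U}(F_l)$ as $F_l$-vector spaces, and this is precisely the ingredient that makes the dimension count survive the twist.

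The paper's route avoids the minimality trap by applying minimality \emph{before} descending to $\mathcal{U}_\chi$. One first shows $\mathcal{U}$ is abelian (via $Z(\mathcal{U})$ being characteristic); then $\mathrm{Gal}_{\mathrm{ID}}(\widetilde{E}/E)$, being normal in $\mathrm{Gal}_{\mathrm{ID}}(\widetilde{E}/F)$ (which surjects onto $\mathcal{G}^\circ \rtimes H$), is automatically $\mathcal{G}^\circ \rtimes H$-invariant inside the abelian group $\mathcal{U}(K)$. Minimality now gives the dichotomy ``finite or all of $\mathcal{U}$'', so it suffices to produce $\widetilde{U}_l$ making $\widetilde{E}/E$ infinite. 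Only at that point does the paper rerun R\"oscheisen's counting argument on $\mathcal{U}_\chi$: the isomorphism classes are parametrized by $\prod_l \mathcal{U}_\chi(F_l)/\mathcal{U}_\chi(F_{l+1})$, which is uncountable-dimensional over $K$ thanks to the $H^1$ vanishing, while the ``finite'' locus is bounded by a countable-dimensional space of maximal ideals in $E[\widetilde{X}_{ij},\det(\widetilde{X})^{-1}]$. In short, the ``small details'' are not a formality --- they are exactly the additive Hilbert 90 input together with the correct placement of the minimality argument, neither of which your black-box citation of \cite{Roe07} supplies.
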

\begin{proof} 
Let  
\begin{align*}
	\begin{xy}
  	\xymatrix{
        &    & \text{Gal}_{\text{ID}}(\widetilde{E}/F) \ar[r]^{\text{res}} \ar[d]^{\widetilde{\alpha}}  & \text{Gal}_{\text{ID}}(E/F)  \ar[r]^{\text{res}} \ar[d]_{\cong}^{\alpha} &  \\
     		1 \ar[r] &  \mathcal{U}(K) \ar[r] & \mathcal{U}(K) \rtimes (\mathcal{G}^{\circ}(K) \rtimes H) \ar[r]^/0.3cm/{\widetilde{\beta}}  & \mathcal{G}^{\circ}(K) \rtimes H \ar[r] \ar@<4pt>^/-0.3cm/{\widetilde{\sigma}}[l] & 1.  
  	}
	\end{xy} 
\end{align*} 
be an $H$-rigid, $H$-split, $H$-effective ID-embedding problem over $F$ with minimal, reduced, connected, unipotent kernel. We use the notation as in Theorem \ref{thm semidirect 2}. \\ 
The group $\mathcal{G}^{\circ} \rtimes H$ acts on $\mathcal {U}$ and the center $\text{Z}(\mathcal{U})$ is a non-trivial, characteristic subgroup of $\mathcal{U}$, so it is $\mathcal{G}^{\circ} \rtimes H$-invariant and a normal subgroup of $\mathcal{U}$. Since $\mathcal{U}$ is unipotent, $\text{Z}(\mathcal{U})$ is nontrivial and therefore by minimality of $\mathcal{U}$, we get $\text{Z}(\mathcal{U}) = \mathcal{U}$, i.e. $\mathcal{U}$ is abelian. \\ 
Further if $\mathcal{A}$ is a non-connected, $\mathcal{G}^{\circ} \rtimes H$-invariant, normal subgroup of $\mathcal{U}$, then $\mathcal{A}$ is finite, because its identity component $\mathcal{A}^{\circ}$ is also $\mathcal{G}^{\circ} \rtimes H$-invariant and normal in $\mathcal{U}$ and hence trivial by minimality of $\mathcal{U}$. \\
By Theorem \ref{thm semidirect 2}, every sequence $U_l \in \mathcal{U}(E_l)^{\mathcal{G}^{\circ} \rtimes H} := \{U_l \in \mathcal{U}(E_l) | U_l \ \text{are} \ \mathcal{G}^{\circ} \rtimes H\text{-equivariant}\}$ $(l \in \mathbb{N})$ defines an IPV-extension $\widetilde{E}/F$ with $\text{Gal}_{\text{ID}}(\widetilde{E}/E) \leq \mathcal{U}(K)$ and $\text{Gal}_{\text{ID}}(\widetilde{E}/F) \leq \mathcal{U}(K) \rtimes (\mathcal{G}^{\circ}(K) \rtimes H)$. Since $\mathcal{U}$ is minimal, we obtain that $\text{Gal}_{\text{ID}}(\widetilde{E}/E)$ is finite or $\text{Gal}_{\text{ID}}(\widetilde{E}/E) = \mathcal{U}(K)$. So we have to show that there exists a sequence $U_l \in \mathcal{U}(E_l)^{\mathcal{G}^{\circ} \rtimes H}$ such that $\widetilde{E}/E$ is not finite. \\  
By Lemma \ref{lem unipot equi}, every sequence $\widetilde{U}_l \in \mathcal{U}_{\chi}(F_l)$ define fundamental solution matrices $\widetilde{X}_l$ with $\widetilde{X}_{l+1} = \widetilde{U}_l\widetilde{X}^{-1}_l$. Two such sequences $(\widetilde{U}_l)_{l \in \mathbb{N}}$, $(\widetilde{U}^{'}_l)_{l \in \mathbb{N}}$ have fundamental solution matrices $\widetilde{X}_l$, $\widetilde{X}^{'}_l$ which are ID-isomorphic over $F$ if and only if $(\widetilde{U}_0 \cdots \widetilde{U}_l)^{-1}(\widetilde{U}^{'}_0 \cdots \widetilde{U}^{'}_l) \in \mathcal{U}_{\chi}(F_{l+1})$. 
Therefore we have a one-to-one correspondence between ID-isomorphism classes of those fundamental solution matrices and the infinite product 
\begin{align*}
	\varprojlim (\mathcal{U}_{\chi}(F)/\mathcal{U}_{\chi}(F_{l})) = \prod_{l\in\mathbb{N}}{\mathcal{U}_{\chi}(F_{l})/\mathcal{U}_{\chi}(F_{l+1})} .
\end{align*} 
Since $\mathcal{U} \cong \mathbb{G}^m_a$ and $H^1(\text{Gal}_{\text{ID}}(L/F),\mathbb{G}^m_a) = 1$ (see \cite{Spr98}, Example 12.3.5), by \cite{Spr98}, Proposition 12.3.2, we obtain that $\mathcal{U}_{\chi}(F_{l}) \cong \mathcal{U}(F_l)$ as $F_l$-vector spaces. Thus $\mathcal{U}_{\chi}(F_{l})/\mathcal{U}_{\chi}(F_{l+1})$ is a $K$-vector space with \\ $\text{dim}_K(\mathcal{U}_{\chi}(F_{l})/\mathcal{U}_{\chi}(F_{l+1})) \geq \text{dim}_K(F_l/F_{l+1}) \geq 2$. Hence the dimension of the infinite product as $K$-vector space is uncountable $(\geq 2^{\mathbb{N}})$. \\ 
Those fundamental solution matrices whose IPV-extension is finite are given by maximal ideals in the ring $E[\widetilde{X}_{ij},\text{det}(\widetilde{X})^{-1}]$. Every maximal ideal is given by $n^2$ polynomials, thus the $E$-vector space $V$ of $n^2$-tuples of polynomials gives an upper bound to the number of those fundamental solution matrices with finite IPV-extension. But since $V$ is an $E$-vector space of countable dimension and $E$ is a $K$-vector space of countable dimension, $V$ is a $K$-vector space of countable dimension. \\
Thus for dimensional reasons, there exists a sequence $U_l \in \mathcal{U}(E_l)$ with IPV-extension $\widetilde{E}$, such that $\text{Gal}_{\text{ID}}(\widetilde{E}/F) \cong \mathcal{U}(K) \rtimes \mathcal{G}^{\circ}(K) \rtimes H$ and $\widetilde{E}/F$ is $H$-effective by construction. 
\end{proof}

\section{The Embedding Theorem} 

The next lemma is taken from \cite{Obe03}. 

\begin{lemma} \label{lem direct H-split}
Let $H$ be a finite group and $\widetilde{\mathcal{G}} := \widetilde{\mathcal{G}}^{\circ} \rtimes H, \mathcal{G} := \mathcal{G}^{\circ} \rtimes H \in \emph{\textbf{AffGr}}_K^{\emph{red}}$, with $H$-rigid, $H$-split epimorphism $\beta$$: \widetilde{\mathcal{G}}^{\circ} \rtimes H \rightarrow \mathcal{G}^{\circ} \rtimes H$. 
\begin{enumerate} 
 \item If \emph{ker($\beta$)} is a reduced torus $\mathcal{T}$, then $\beta$ is subdirect $H$-split, i.e., $\widetilde{\mathcal{G}}^{\circ} \cong  \mathcal{T} \times \mathcal{G}^{\circ}$. 
 \item If \emph{ker($\beta$)} is a reduced, semi-simple, centerless group $\mathcal{A}$, then $\beta$ is subdirect $H$-split, i.e., $\widetilde{\mathcal{G}}^{\circ} \cong \mathcal{A} \times \mathcal{G}^{\circ}$. 
\end{enumerate}
\end{lemma}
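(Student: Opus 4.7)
The plan is to exploit the $H$-split hypothesis to obtain an $H$-equivariant homomorphic section $\sigma^{\circ}$$: \mathcal{G}^{\circ} \rightarrow \widetilde{\mathcal{G}}^{\circ}$ of the connected-component epimorphism $\beta^{\circ} := \beta|_{\widetilde{\mathcal{G}}^{\circ}}$, and then to show that (possibly after twisting) the image of this section centralises $\text{ker}(\beta^{\circ})$. Once centralisation is established, the factorisation $\widetilde{\mathcal{G}}^{\circ} = \text{ker}(\beta^{\circ}) \cdot \sigma^{\circ}(\mathcal{G}^{\circ})$, which is automatic from $\beta^{\circ}$ being split with trivial intersection, becomes a direct product decomposition.

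\emph{Part 1 (torus kernel).} The conjugation action of $\sigma^{\circ}(\mathcal{G}^{\circ})$ on $\mathcal{T}$ defines a morphism of algebraic groups $c$$: \sigma^{\circ}(\mathcal{G}^{\circ}) \rightarrow \text{Aut}(\mathcal{T})$. The automorphism group scheme of a torus is the constant group scheme associated to $\text{GL}_n(\mathbb{Z})$ acting on the character lattice, which is étale. Since $\sigma^{\circ}(\mathcal{G}^{\circ})$ is connected and $c$ sends the identity to the identity, $c$ is trivial. Hence $\sigma^{\circ}(\mathcal{G}^{\circ})$ and $\mathcal{T}$ commute inside $\widetilde{\mathcal{G}}^{\circ}$, giving
\begin{align*}
\widetilde{\mathcal{G}}^{\circ} \cong \mathcal{T} \times \sigma^{\circ}(\mathcal{G}^{\circ}) \cong \mathcal{T} \times \mathcal{G}^{\circ}.
\end{align*}

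\emph{Part 2 (semi-simple centerless kernel).} The conjugation action $c$$: \sigma^{\circ}(\mathcal{G}^{\circ}) \rightarrow \text{Aut}(\mathcal{A})$ need not be trivial. However, since $\mathcal{A}$ is semi-simple with trivial centre, the inner-automorphism map $\text{Int}$$: \mathcal{A} \rightarrow \text{Aut}(\mathcal{A})$ is injective and identifies $\mathcal{A}$ with $\text{Inn}(\mathcal{A}) = \text{Aut}(\mathcal{A})^{\circ}$, the quotient $\text{Out}(\mathcal{A})$ being the finite group of diagram automorphisms. Connectedness of $\sigma^{\circ}(\mathcal{G}^{\circ})$ therefore forces $c$ into $\text{Inn}(\mathcal{A})$, yielding a morphism $\varphi$$: \mathcal{G}^{\circ} \rightarrow \mathcal{A}$ with $c(g) = \text{Int}(\varphi(g))$. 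Set
\begin{align*}
\sigma'(g) := \varphi(g)^{-1} \sigma^{\circ}(g).
\end{align*}
By construction $\sigma'(g) \in C_{\widetilde{\mathcal{G}}^{\circ}}(\mathcal{A})$, and $\sigma'$ is a section of $\beta^{\circ}$ since $\varphi(g) \in \mathcal{A} = \text{ker}(\beta^{\circ})$. A short computation using the centraliser property of $\sigma'(g)$ together with $\varphi(g') \in \mathcal{A}$ shows that $\sigma'$ is a group homomorphism (and uses that $\varphi$ is itself a homomorphism, which follows from centerlessness of $\mathcal{A}$). Hence
\begin{align*}
\widetilde{\mathcal{G}}^{\circ} \cong \mathcal{A} \times \sigma'(\mathcal{G}^{\circ}) \cong \mathcal{A} \times \mathcal{G}^{\circ}.
\end{align*}

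The main delicate point is carrying $H$-equivariance through the construction to upgrade the isomorphism of connected components to the full subdirect $H$-split conclusion. In Part 1 this is automatic, as $\sigma^{\circ}$ is an $H$-morphism and both factors of the product are $H$-stable. In Part 2 the uniqueness of $\varphi$ — forced by injectivity of $\text{Int}$ on the centerless group $\mathcal{A}$ — combined with the $H$-equivariance of $\sigma^{\circ}$ and of the inclusion $\mathcal{A} \hookrightarrow \widetilde{\mathcal{G}}^{\circ}$ (since $\mathcal{A} = \text{ker}(\beta^{\circ})$ and $\beta$ is $H$-rigid) forces $\varphi$, and hence $\sigma'$, to be $H$-equivariant as well, which is what is needed to conclude that $\beta$ is subdirect $H$-split.
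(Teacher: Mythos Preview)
Your argument is correct, and for Part~1 it coincides with the paper's: both deduce that $\mathcal{T}$ is central in $\widetilde{\mathcal{G}}^{\circ}$ from the fact that a connected group maps trivially to the \'etale automorphism group of a torus (the paper phrases this via $\text{N}_{\widetilde{\mathcal{G}}}(\mathcal{T})^{\circ} = \text{C}_{\widetilde{\mathcal{G}}}(\mathcal{T})^{\circ}$).

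For Part~2 the two arguments diverge in execution. The paper never touches the given section: it observes that $\mathcal{A} \cap C_{\widetilde{\mathcal{G}}^{\circ}}(\mathcal{A}) = \text{Z}(\mathcal{A}) = 1$ and that $\widetilde{\mathcal{G}}^{\circ}/(\mathcal{A}\cdot C_{\widetilde{\mathcal{G}}^{\circ}}(\mathcal{A}))$ embeds into $\text{Out}(\mathcal{A})$, which is finite; connectedness of $\widetilde{\mathcal{G}}^{\circ}$ then forces $\widetilde{\mathcal{G}}^{\circ} = \mathcal{A} \times C_{\widetilde{\mathcal{G}}^{\circ}}(\mathcal{A})$. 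The $H$-equivariant section is then simply the inverse of $\beta^{\circ}|_{C_{\widetilde{\mathcal{G}}^{\circ}}(\mathcal{A})}$, and its $H$-equivariance is automatic because the centraliser of the $H$-stable subgroup $\mathcal{A}$ is itself $H$-stable. Your route instead starts from the given $H$-section $\sigma^{\circ}$ and twists it by $\varphi^{-1}$ into the centraliser; this is more hands-on and requires the extra verifications that $\varphi$ and $\sigma'$ are homomorphisms and that $\varphi$ inherits $H$-equivariance from uniqueness. Both arrive at the same complement $C_{\widetilde{\mathcal{G}}^{\circ}}(\mathcal{A})$, but the paper's version is shorter and makes it transparent that the $H$-split hypothesis is in fact not needed in Part~2.
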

\begin{proof} 
 \emph{1.} Since $\widetilde{\mathcal{G}} = \text{N}_{\widetilde{\mathcal{G}}}(\mathcal{T})$, we have $\widetilde{\mathcal{G}}^{\circ} = \text{N}_{\widetilde{\mathcal{G}}}(\mathcal{T})^{\circ} = \text{C}_{\widetilde{\mathcal{G}}}(\mathcal{T})^{\circ}$, by \cite{Spr98}, Corollary 3.2.9 and therefore $\mathcal{T} \leq \text{Z}(\widetilde{\mathcal{G}}^{\circ})$. Then via the homomorphic section $\mathcal{G}^{\circ}$ will be a subgroup of $\widetilde{\mathcal{G}}^{\circ}$ and hence $[\mathcal{T}, \mathcal{G}^{\circ}] = 1$. \\
\emph{2.} Immediately from the assumptions we obtain $\mathcal{A} \cap \text{C}_{\widetilde{\mathcal{G}}^{\circ}}(\mathcal{A}) = \text{Z}(\mathcal{A}) = 1$. The group $\widetilde{\mathcal{G}}^{\circ}$ acts on $\mathcal{A}$ via conjugation as automorphism and $\mathcal{A} \cdot \text{C}_{\widetilde{\mathcal{G}}^{\circ}}(\mathcal{A})$ acts also on $\mathcal{A}$ via conjugation as inner automorphism, so $\widetilde{\mathcal{G}}^{\circ}/\mathcal{A} \cdot \text{C}_{\widetilde{\mathcal{G}}^{\circ}}(\mathcal{A}) \leq \text{Aut}(\mathcal{A})/\text{Inn}(\mathcal{A})$ is a finite group by \cite{Hum98}, Theorem 27.4. Given that $\widetilde{\mathcal{G}}^{\circ}$ is connected, we obtain $\mathcal{A} \cdot \text{C}_{\widetilde{\mathcal{G}}^{\circ}}(\mathcal{A}) = \widetilde{\mathcal{G}}^{\circ} = \mathcal{A} \times \text{C}_{\widetilde{\mathcal{G}}^{\circ}}(\mathcal{A})$. Since $\beta$ is $H$-rigid, $H$ acts on $\text{C}_{\widetilde{\mathcal{G}}^{\circ}}(\mathcal{A})$ via conjugation in the same way as on $\widetilde{\mathcal{G}}^{\circ}$, hence $\sigma$$: \mathcal{G}^{\circ} \rtimes H \rightarrow \text{C}_{\widetilde{\mathcal{G}}^{\circ}}(\mathcal{A}) \rtimes H$ is a regular, $H$-rigid homomorphic section. 
\end{proof}

\begin{prop} \label{prop direct H-split solution}
Let $F$ be an algebraic function field in one variable over $K$ and let $H$ be a finite group. Then every $H$-rigid, subdirect $H$-split ID-embedding problem in $\emph{\textbf{AffGr}}_K^{\emph{red}}$ over $F$ with reduced, connected kernel has a proper solution. 
\end{prop}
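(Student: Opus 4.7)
The plan is as follows. By the subdirect $H$-split hypothesis we have an internal direct product decomposition
\begin{align*}
	\widetilde{\mathcal{G}} \cong \mathcal{A} \times \mathcal{G},
\end{align*}
where $\mathcal{A} := \ker(\beta)$ is reduced and connected and where, by $H$-rigidity together with the direct factor structure, $H$ acts trivially on $\mathcal{A}$. The strategy is to realize $\mathcal{A}(K)$ as the differential Galois group of an IPV-extension $M/F$ which is disjoint from $E$ over $F$, and then take the compositum $\widetilde{E} := E \cdot M$.

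First I would invoke the theorem of Matzat and van der Put \cite{MvdP03}: since $F$ is an algebraic function field in one variable over the algebraically closed field $K$ of positive characteristic, every reduced linear algebraic group is realizable over $F$, so in particular there exists an IPV-extension $M/F$ with $\text{Gal}_{\text{ID}}(M/F) \cong \mathcal{A}(K)$. Next I would arrange that $M \cap E = F$ by choosing $M$ appropriately within the family of possible realizations of $\mathcal{A}(K)$. Once that disjointness is in hand, Lemma \ref{lem fibre prod} applied to $E$, $M$ with $E \cap M = F$ yields
\begin{align*}
	\text{Gal}_{\text{ID}}(\widetilde{E}/F) \cong \text{Gal}_{\text{ID}}(E/F) \times \text{Gal}_{\text{ID}}(M/F) \cong \mathcal{G}(K) \times \mathcal{A}(K) \cong \widetilde{\mathcal{G}}(K).
\end{align*}
Finally, one verifies that the resulting isomorphism $\widetilde{\alpha}: \text{Gal}_{\text{ID}}(\widetilde{E}/F) \to \widetilde{\mathcal{G}}(K)$ fits into the embedding problem diagram: its restriction to $\text{Gal}_{\text{ID}}(E/F)$ is $\alpha$, as forced by the direct product structure and the trivial $H$-action on $\mathcal{A}$. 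Thus $\widetilde{\alpha}$ is the desired proper solution.

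The hard part will be the disjointness step $M \cap E = F$. The intersection $M \cap E$ is itself an IPV-extension of $F$ whose Galois group is a common quotient of $\mathcal{A}(K)$ and $\mathcal{G}(K)$, and by the Galois correspondence it corresponds to a reduced closed normal subgroup of each. The Matzat--van der Put construction of $M$ is expected to be sufficiently flexible (through a choice of defining matrices, or through parameters in the underlying moduli of realizations) to allow $M$ to be chosen avoiding the finitely many intermediate IPV-extensions of $E/F$ attached to reduced normal subgroups of $\mathcal{G}$. Carrying out this genericity argument in positive characteristic in the iterative style of Oberlies \cite{Obe03} is the main technical point; once the disjoint realization $M$ is produced, the remaining verifications are formal, since the direct product structure $\widetilde{\mathcal{G}} \cong \mathcal{A} \times \mathcal{G}$ trivializes all compatibility issues with $H$ and $\alpha$.
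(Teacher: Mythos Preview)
Your plan contains a genuine error at the very first step. From the subdirect $H$-split hypothesis you conclude that $\widetilde{\mathcal{G}}\cong\mathcal{A}\times\mathcal{G}$ with $H$ acting \emph{trivially} on $\mathcal{A}$. That is not what the hypothesis gives. As Lemma~\ref{lem direct H-split} and the diagram in the paper's proof make explicit, ``subdirect $H$-split'' here means $\widetilde{\mathcal{G}}^{\circ}\cong\mathcal{A}\times\mathcal{G}^{\circ}$ as $H$-groups, so that $\widetilde{\mathcal{G}}=(\mathcal{A}\times\mathcal{G}^{\circ})\rtimes H$ with $H$ acting on both factors by conjugation. In the torus case, for instance, Lemma~\ref{lem direct H-split} only shows $\mathcal{T}\le\mathrm{Z}(\widetilde{\mathcal{G}}^{\circ})$; the finite group $H$ may still permute the characters of $\mathcal{T}$ nontrivially. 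Consequently an IPV-extension $M/F$ with $\mathrm{Gal}_{\mathrm{ID}}(M/F)\cong\mathcal{A}(K)$, produced in isolation, will in general \emph{not} yield $(\mathcal{A}\times\mathcal{G}^{\circ})\rtimes H$ upon compositum with $E$: the fibre product over $F$ gives $\mathcal{A}\times(\mathcal{G}^{\circ}\rtimes H)$, which is the wrong group unless the $H$-action on $\mathcal{A}$ happens to be trivial. What is actually needed is a realization of $\mathcal{A}\rtimes H$ over $F$ that extends the given finite extension $L/F$ with $L=E^{\mathcal{G}^{\circ}(K)}$.

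The paper handles both this point and your acknowledged ``hard part'' (disjointness) in one stroke. One passes to the auxiliary embedding problem $\mathcal{E}(\overline{\alpha},\overline{\beta}_r)$ with $\overline{\beta}_r\colon\mathcal{A}^r\rtimes H\to H$; this has connected kernel and finite cokernel, so it has a proper solution by \cite{Mat01}, Corollary~8.9. Choosing $r>\dim_K(\mathcal{G}^{\circ})$ then forces, by a dimension count, the existence of a fixed field $\widetilde{L}$ of some $\mathcal{A}^{r-1}(K)\le\mathcal{A}^r(K)$ with $\widetilde{L}\cap E=L$; this is the concrete mechanism replacing your hoped-for ``genericity argument''. (Your side remark that there are only finitely many intermediate IPV-extensions of $E/F$ corresponding to reduced normal subgroups is also false in general, so that route would not work as stated.) After that, Lemma~\ref{lem fibre prod} applied over $L$ rather than over $F$ gives $\mathrm{Gal}_{\mathrm{ID}}(\widetilde{L}\cdot E/F)\cong(\mathcal{A}\rtimes H)\times_H(\mathcal{G}^{\circ}\rtimes H)\cong(\mathcal{A}\times\mathcal{G}^{\circ})\rtimes H$, which is the correct target group.
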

\begin{proof} 
By Theorem \ref{thm cd H-eff}, we can assume that all such embedding problems are $H$-effective. Let 
\begin{align*}
	\begin{xy}
  	\xymatrix{
        &   &   & \text{Gal}_{\text{ID}}(E/F) \ar[d]_{\cong}^{\alpha} & \\
     		1 \ar[r] &  \mathcal{A}(K) \ar[r] & (\mathcal{A} \times \mathcal{G}^{\circ})(K) \rtimes H \ar[r]^{\beta}  & \mathcal{G}^{\circ}(K) \rtimes H \ar[r] & 1 
  	}
	\end{xy} 
\end{align*} 
be an $H$-rigid, subdirect $H$-split, $H$-effective ID-embedding problem with connected kernel and let $L := E^{\mathcal{G}^{\circ}(K)}$. By the assumptions $H$ acts on $\mathcal{A}$ and therefore $H$ acts also on $\mathcal{A}^{r}$, where $r \in \mathbb{N}$. With $\overline{\alpha}$$: \text{Gal}(L/F) \stackrel{\cong}\rightarrow H$ and $\overline{\beta}_r$$: \mathcal{A}^r \rtimes H \rightarrow H$ (projection on the second factor), we get an ID-embedding problem $\mathcal{E}(\overline{\alpha}, \overline{\beta}_r)$ with connected kernel and finite cokernel. Such an ID-embedding problem has a proper solution by \cite{Mat01}, Corollary 8.9 and if we choose $r > \text{dim}_K(\mathcal{G}^{\circ})$, there exists a fixed field $\widetilde{L}$ of $\mathcal{A}^{r-1}(K) \le \mathcal{A}^r(K)$, with $\widetilde{L} \cap E = L$. Hence $\text{Gal}(\widetilde{L}/F) \cong \mathcal{A}(K) \rtimes H$ and with $\widetilde{E} := \widetilde{L} \cdot E$ we get by Lemma \ref{lem fibre prod} that $\text{Gal}(\widetilde{E}/F) \cong (\mathcal{A}(K) \rtimes H) \times_{H} \mathcal{G}(K) \cong (\mathcal{A} \times \mathcal{G}^{\circ})(K) \rtimes H$.
\end{proof}

\begin{thm} \label{embedding thm}
Let $F$ be an algebraic function field in one variable over an algebraically closed field $K$ of positive characteristic. Then every ID-embedding probelm in $\emph{\textbf{AffGr}}_K^{\emph{red}}$ over $F$, with reduced kernel has a proper solution.
\end{thm}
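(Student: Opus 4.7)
The plan is to apply the structural decomposition of Proposition~\ref{prop decompostion} to reduce the given ID-embedding problem $\mathcal{E}(\alpha,\beta)$ to the five elementary classes of epimorphisms listed there, each of which has already been shown to be an embedding epimorphism over $F$ by the work of the preceding sections, and then to conclude via Proposition~\ref{prop emb}.

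First I would record a general enabling observation. Since $F$ is an algebraic function field in one variable over the algebraically closed field $K$, one has $\text{cd}(F) \le 1$, so Theorem~\ref{thm cd H-eff} guarantees that every isomorphism $\text{Gal}_{\text{ID}}(E/F) \stackrel{\cong}{\to} \mathcal{G}(K)$ with $\mathcal{G} = \mathcal{G}^{\circ} \rtimes H$ reduced is automatically $H$-effective. This is the lubricant that lets the output of one step feed into the next, since Theorems~\ref{thm solution frat} and \ref{thm solution unipot kernel} both require $H$-effectivity as part of their hypothesis.

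With this in hand, I would apply Proposition~\ref{prop decompostion} to $\beta$ (whose kernel is reduced by hypothesis), writing it as a composition of epimorphisms belonging to the five types of that proposition, and dispatch each type separately. Epimorphisms with finite kernel are absorbed by Theorem~\ref{thm solution finite kernel}; $H$-rigid Frattini-epimorphisms by Theorem~\ref{thm solution frat}; $H$-rigid, $H$-split epimorphisms whose kernel is either a reduced torus or a reduced, connected, semi-simple, centerless group are upgraded to subdirect $H$-split by Lemma~\ref{lem direct H-split}, after which Proposition~\ref{prop direct H-split solution} produces a proper solution; and $H$-rigid, $H$-split epimorphisms with minimal, reduced, connected, unipotent kernel fall to Theorem~\ref{thm solution unipot kernel}. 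Proposition~\ref{prop emb} then reassembles these pieces to give a proper solution of the original $\mathcal{E}(\alpha,\beta)$.

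I expect no deep obstacle at this stage; the substantive analytic content has already been placed into the five elementary solution theorems. The only matter requiring care is the bookkeeping for $H$-effectivity through the chain of reductions, because several of the elementary solution theorems consume and should produce $H$-effective data. Theorem~\ref{thm cd H-eff} combined with $\text{cd}(F) \le 1$ eliminates this issue uniformly, so the decomposition and reassembly close up cleanly.
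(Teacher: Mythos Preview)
Your proposal is correct and follows essentially the same route as the paper's own proof: decompose $\beta$ via Proposition~\ref{prop decompostion}, dispatch types (1)--(5) respectively by Theorem~\ref{thm solution finite kernel}, Theorem~\ref{thm solution frat}, Lemma~\ref{lem direct H-split} together with Proposition~\ref{prop direct H-split solution}, and Theorem~\ref{thm solution unipot kernel}, and reassemble by Proposition~\ref{prop emb}. Your explicit remark that $\text{cd}(F)\le 1$ and Theorem~\ref{thm cd H-eff} keep the $H$-effectivity bookkeeping consistent across the chain is a welcome clarification that the paper leaves implicit.
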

\begin{proof}
By Propostions \ref{prop emb} and \ref{prop decompostion} we have to show that ID-embedding problems of types (1) - (5) have a proper solution. For type (1) see Theorem \ref{thm solution finite kernel}, type (2) is done by Theorem \ref{thm solution frat}, types (3) and (4) are compleded by Lemma \ref{lem direct H-split} together with Proposition \ref{prop direct H-split solution}, and type (5) follows by Theorem \ref{thm solution unipot kernel}. 
\end{proof}

From the above theorem we obtain a result about the structure of the Tannakian fundamental group for $\textbf{IDMod}_F$.

\begin{defn} \label{red free}
A prolinear group $\pi$ over $K$ is called \textbf{reduced free}, if every ID-embedding problem 
\begin{align*}
	\begin{xy}
  	\xymatrix{
        &    &   & \pi \ar[dl]_{\widetilde{\alpha}} \ar[d]^{\alpha} & \\
     		1 \ar[r] &  \mathcal{A}(K) \ar[r] & \widetilde{\mathcal{G}}(K) \ar[r]^{\beta}  & \mathcal{G}(K) \ar[r] & 1, 
  	}
	\end{xy} 
\end{align*} 
with $\mathcal{A}, \widetilde{\mathcal{G}}, \mathcal{G} \in \text{\textbf{AffGr}}_K^{\text{red}}$ and epimorphisms $\widetilde{\alpha}, \alpha$, has a proper solution. 
\end{defn}

In usual Galois theory by Iwasawa's Freiheitssatz \cite{MM99}, Theorem IV.1.12, the absolute Galois group $\text{Gal}(F^{\text{sep}}/F)$ is free (of countable infinite rank) if and only if all embedding problems over $F$ have a proper solution. With the above definition, the main result, Theorem \ref{embedding thm}, can be seen as a generalization of this result (remember that the usual Galois theory is a special case of the iterative differential Galois theory; see Theorem \ref{finite gal extension}).  

\begin{remark} 
Let $F$ be an algebraic function field in one variable over an algebraically closed field $K$ of positive characteristic. Let $\pi_1(\textbf{IDMod}_F)$ be the Tannakian fundamental group for $\textbf{IDMod}_F$. Then $\pi_1(\textbf{IDMod}_F)$ is an analogon to the absolute Galois group, as in the introduction explained and therefore by Theorem \ref{embedding thm}, $\pi_1(\textbf{IDMod}_F)$ is a reduced free group scheme. 
\end{remark}

\end{document}